\begin{document}
\theoremstyle{plain}
\newtheorem{thm}{Theorem}[section]
\newtheorem{theorem}[thm]{Theorem}
\newtheorem{addendum}[thm]{Addendum}
\newtheorem{lemma}[thm]{Lemma}
\newtheorem{corollary}[thm]{Corollary}
\newtheorem{proposition}[thm]{Proposition}
\theoremstyle{definition}
\newtheorem{remark}[thm]{Remark}
\newtheorem{remarks}[thm]{Remarks}
\newtheorem{notations}[thm]{Notations}
\newtheorem{definition}[thm]{Definition}
\newtheorem{claim}[thm]{Claim}
\newtheorem{assumption}[thm]{Assumption}
\newtheorem{assumptions}[thm]{Assumptions}
\newtheorem{property}[thm]{Property}
\newtheorem{properties}[thm]{Properties}
\newtheorem{example}[thm]{Example}
\newtheorem{examples}[thm]{Examples}
\newtheorem{conjecture}[thm]{Conjecture}
\newtheorem{questions}[thm]{Questions}
\newtheorem{question}[thm]{Question}
\numberwithin{equation}{section}
\newcommand{\sA}{{\mathcal A}}
\newcommand{\sB}{{\mathcal B}}
\newcommand{\sC}{{\mathcal C}}
\newcommand{\sD}{{\mathcal D}}
\newcommand{\sE}{{\mathcal E}}
\newcommand{\sF}{{\mathcal F}}
\newcommand{\sG}{{\mathcal G}}
\newcommand{\sH}{{\mathcal H}}
\newcommand{\sI}{{\mathcal I}}
\newcommand{\sJ}{{\mathcal J}}
\newcommand{\sK}{{\mathcal K}}
\newcommand{\sL}{{\mathcal L}}
\newcommand{\sM}{{\mathcal M}}
\newcommand{\sN}{{\mathcal N}}
\newcommand{\sO}{{\mathcal O}}
\newcommand{\sP}{{\mathcal P}}
\newcommand{\sQ}{{\mathcal Q}}
\newcommand{\sR}{{\mathcal R}}
\newcommand{\sS}{{\mathcal S}}
\newcommand{\sT}{{\mathcal T}}
\newcommand{\sU}{{\mathcal U}}
\newcommand{\sV}{{\mathcal V}}
\newcommand{\sW}{{\mathcal W}}
\newcommand{\sX}{{\mathcal X}}
\newcommand{\sY}{{\mathcal Y}}
\newcommand{\sZ}{{\mathcal Z}}
\newcommand{\A}{{\mathbb A}}
\newcommand{\B}{{\mathbb B}}
\newcommand{\C}{{\mathbb C}}
\newcommand{\D}{{\mathbb D}}
\newcommand{\E}{{\mathbb E}}
\newcommand{\F}{{\mathbb F}}
\newcommand{\G}{{\mathbb G}}
\newcommand{\BH}{{\mathbb H}}
\newcommand{\I}{{\mathbb I}}
\newcommand{\J}{{\mathbb J}}
\newcommand{\BL}{{\mathbb L}}
\newcommand{\M}{{\mathbb M}}
\newcommand{\N}{{\mathbb N}}
\newcommand{\BP}{{\mathbb P}}
\newcommand{\Q}{{\mathbb Q}}
\newcommand{\R}{{\mathbb R}}
\newcommand{\BS}{{\mathbb S}}
\newcommand{\T}{{\mathbb T}}
\newcommand{\U}{{\mathbb U}}
\newcommand{\V}{{\mathbb V}}
\newcommand{\W}{{\mathbb W}}
\newcommand{\X}{{\mathbb X}}
\newcommand{\Y}{{\mathbb Y}}
\newcommand{\Z}{{\mathbb Z}}
\newcommand{\rk}{{\rm rk}}
\newcommand{\ch}{{\rm c}}
\newcommand{\Sp}{{\rm Sp}}
\newcommand{\Sl}{{\rm Sl}}
\title{{Inequalities between the Chern numbers of a singular
fiber in a family of algebraic curves} \footnotetext{ {\itshape 2000
Mathematics Subject Classification. } 14F05, 14H30, 13B22.}
\footnotetext{ {\itshape Key words and phrases.}  Chern number,
singular fiber, modular invariant, isotrivial, classification.}
\thanks{This work is supported by NSFC,
Foundation of the EMC and the Program of Shanghai Subject Chief
Scientist.} }

\author{Jun Lu, Sheng-Li Tan}

\noindent
\maketitle
\begin{abstract}
{In a family of curves, the Chern numbers of a singular fiber are the local contributions to the
 Chern numbers of the total space. We will give some inequalities
between the Chern numbers of a singular fiber as well as their lower and upper bounds.
We introduce the dual fiber of a singular fiber, and prove a duality theorem. As an
application, we will classify singular fibers with large or small Chern numbers.}
\end{abstract}

\section{Introduction and main results}
Chern numbers of a singular fiber in a family of curves are the
local contributions of the fiber to the global Chern numbers of the
total space. Our first purpose of this paper is to find the best
inequalities between the Chern numbers of a singular fiber. Our
second purpose is to try to give a new approach to the
classification of singular fibers of genus $g$.  We know that when
 $g$ is big, there are too many singular fibers of genus
$g$ to classify completely (see \cite{Iitaka}, \cite{NaUe},
\cite{Ogg}, \cite{Uematsu}). In order to get the local-global
relations between the invariants, one possible way is to classify
singular fibers according to their contributions to the global
invariants. To explain this approach, we will classify singular
fibers with big or small Chern numbers and give some applications.
See the survey \cite{AK00} for the background of the study on the
local-global properties for families of curves.

A family of curves of genus $g$ over $C$ is a fibration $f:X\to C$
whose general fibers $F$ are smooth curves of genus $g$, where $X$
is a complex smooth projective surface. The family is called {\it
semistable} if all of the singular fibers are reduced nodal curves.
If $X=F\times C$ and $f$ is just the second projection to $C$, then
we call $f$ a {\it trivial} family. If all of the smooth fibers of
$f$ are isomorphic to each other, equivalently, $f$ becomes trivial
under a finite base change $\tilde{C}\to C$, then $f$ is called {\it
isotrivial}. We always assume that $f$ is relatively minimal, i.e.,
there is no $(-1)$-curve in any singular fiber.

When $g=1$,  Kodaira \cite{Kod1963III} found the global invariants
from the singular fibers. The first Chern number $c_1^2(X)$ is
always zero, the second Chern number $c_2(X)$ is equal to
$12\chi(\mathcal O_X)$ by Noether's formula, and
\begin{equation}\label{0.1}
c_2(X)=j+6\nu(\textrm{I}^*)+2\nu(\textrm{II})+10\nu(\textrm{II}^*)+
3\nu(\textrm{III})+9\nu(\textrm{III}^*)+
4\nu(\textrm{IV})+8\nu(\textrm{IV}^*),
\end{equation}
where $\nu(\textrm{T})$ denotes the number of singular fibers of
type T, and $j$ is the number of poles of the $J$-function of the
family. Note that the $J$-function over $C$ induces a holomorphic
map of degree $j$ from $C$ to the moduli space $\overline{ \mathcal
M}_1$ of elliptic curves.  So $j$ depends only on the generic
fibers.

By introducing the Chern numbers $c_1^2(F)$, $c_2(F)$ and $\chi_F$
for a singular fiber $F$, the second author (\cite{Ta96},
\cite{Ta98}, \cite{Ta10}) generalized Kodaira's formula (\ref{0.1})
to the higher genus case,
\begin{equation}\label{0.3}
\begin{cases}
c_1^2(X)=\kappa(f)+8(g-1)(g(C)-1)+\sum_{i=1}^sc_1^2(F_i), & \\
c_2(X)=\delta(f)+4(g-1)(g(C)-1)+\sum_{i=1}^sc_2(F_i),&\\
\chi(\mathcal O_X)=\lambda(f)+ (g-1)(g(C)-1)+\sum_{i=1}^s\chi_{F_i},
&
\end{cases}
\end{equation}
where $F_1,\cdots,F_s$ are all singular fibers of $f$, and
$\kappa(f)$, $\delta(f)$ and $\lambda(f)$ are the {\it modular
invariants} of the family.  $f$ induces also a holomorphic map from
$C$ to the moduli space of semistable curves of genus $g$:
$$
J: C\longrightarrow \overline{\mathcal M}_g\, .
$$
Then $\kappa(f)=\deg J^*\kappa$, $\delta(f)=\deg J^*\delta$ and
$\lambda(f)=\deg J^*\lambda$, where $\lambda$, $\delta$ and $\kappa$
are the Hodge divisor class, the boundary divisor class and $\kappa
= 12\lambda -\delta$. In the case of elliptic fibrations,
$\kappa(f)=0$ and $\delta(f)=j$.

Let $\widetilde f: \widetilde X \to \widetilde C$ be a semistable
reduction of $F$ under any base change
 $\pi:\widetilde C\to
C$  ramified over $p=f(F)$ and some non-critical points of $f$. The
Chern numbers of $F$ are defined as follows.
\begin{equation}\label{1.1}
c_1^2(F)=K_{f}^2-\dfrac1d K_{\widetilde f}^2, \hskip0.3cm
c_2(F)=e_f-\dfrac1d e_{\widetilde f}, \hskip0.3cm
\chi_F=\chi_f-\dfrac1d \chi_{\widetilde f},
\end{equation}
where  $d$ is the degree of $\pi$, and $
K_f^2=c_1^2(X)-8(g-1)(g(C)-1)$,
              $e_f=c_2(X)-4(g-1)(g(C)-1)$ and
              $\chi_f=\chi(\mathcal O_X)-(g-1)(g(C)-1)$ are the
              relative invariants of $f$.
These Chern numbers are independent of the choice of the semistable
reduction $\pi$. If $g=1$, then $c_1^2(F)=0$ and $c_2(F)$ is exactly
the coefficient in (\ref{0.1}) according to the type of the fiber
$F$. See \S~\ref{S3.3} for the computation formulas for  the Chern
numbers of $F$. We summarize briefly the known properties of the
Chern numbers. Assume that $g=g(F)\geq 2$ and $F$ contains no
$(-1)$-curves.

\begin{enumerate}
\item {\it Positivity}: \ $c_1^2(F)$, $c_2(F)$ and $\chi_F$ are non-negative rational
numbers, one of the three numbers vanishes if and only if $F$ is
semistable.

\item {\it Noether's equality}: \  $ c_1^2(F)+c_2(F)=12\chi_F.$

\item {\it Blow-up formulas}: \  $
c_1^2(F')=c_1^2(F)-1, \hskip0.1cm c_2(F')=c_2(F)+1, \hskip0.1cm
\chi_{F'}=\chi_F, $ where $F'=\sigma^*F$ is the pullback of $F$
under a blowing up $\sigma: X'\to X$  at a point $p$ on $F$.

\item {\it Canonical class inequality}: $c_1^2(F)\leq 4g-4$.

\item {\it Miyaoka-Yau type inequality}: \ $
c_1^2(F)\leq 2c_2(F)$, {or equivalently} $c_1^2(F)\leq 8\chi_F$,
 with equality iff $F_{\rm
 red}$ is a nodal curve and $F=nF_{\rm
 red}$ for some positive integer $n$.
\end{enumerate}

The positivity is essentially due to Beauville \cite{Be}, Xiao
\cite{Xi92} and  \cite{Ta94}. Noether's equality and the blow-up
formulas are direct consequences of the definition of Chern numbers.
The last two inequalities can be found in \cite{Ta96}.

Let $\bar F$ be a normal crossing fiber obtained by blowing up the
singularities of $F$. ($\bar F$ is called the normal crossing model
of $F$). Write $\bar F=n_1C_1+\cdots+n_kC_k$, where $C_i$'s are the
irreducible components. Denote by $M_F$ the least common
multiplicity of $n_1$, $\cdots,$ $n_k$. Let $n$ be a positive
integer satisfying $n\equiv -1\pmod{M_F}$. Denote by $F^*$ the fiber
obtained from $F$ by a local base change $\pi$ defined by $w=z^n$.
We call $F^*$ the {\it dual fiber} of $F$ (see \S2). This is a
natural generalization of Kodaira's dual fibers for elliptic
fibrations. Our first result is the duality theorem for $\chi_F$.


\begin{theorem}\label{THMnew1.1} {\bf (Duality theorem for $\chi$)}  \ Let $\bar F$  and
$\bar F^*$ be the normal crossing models of $F$ and $F^*$
respectively. Let $N_{\bar F}=g-p_a(\bar F_{\rm red})$. Then $0\leq
N_{\bar F}\leq g$.
\begin{enumerate}
\item[$1)$] $N_{\bar F}=N_{\bar F^*}$,  i.e.,  $p_a(\bar F_{\rm red})=p_a(\bar F^*_{\rm red})$.
\item[$2)$]  $\chi_{F}+\chi_{F^{*}}=N_{\bar F}$.
\item[$3)$]  $
\frac16{N_{{\bar{F}}}}\leq \chi_F\leq\frac56{N_{{\bar{F}}}}. $  \
$\chi_F=\frac16{N_{{\bar{F}}}}$ (resp. $\frac56{N_{{\bar{F}}}}$) if
and only if $F$ (resp. $F^*$) is a reduced curve whose singularities
 are at worst ordinary cusps or
nodes.
 \end{enumerate}
\end{theorem}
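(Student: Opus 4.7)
The plan is to analyze the local structure of $\bar{F}^*$ under the base change $w=z^n$ and combine this with an explicit local formula for $\chi_F$ in terms of the normal crossing model, to be recalled in Section~\ref{S3.3}. Since $n_i\mid M_F$ and $n\equiv-1\pmod{M_F}$, one has $\gcd(n,n_i)=1$ for every $i$. At a smooth point of a component $C_i$, the pullback of the local equation $z=x^{n_i}$ under $z=w^n$ is $x^{n_i}=w^n$, parametrized by $x=t^n,\ w=t^{n_i}$; the normalized base change is smooth, and the preimage of $C_i$ is a single reduced component $C'_i$ mapping isomorphically onto $C_i$. At a node $C_i\cap C_j$, the equation $x^{n_i}y^{n_j}=w^n$ defines a toric cyclic-quotient singularity whose minimal resolution is a Hirzebruch--Jung chain of smooth rational curves joining $C'_i$ and $C'_j$, with multiplicities read from the continued-fraction expansion of a rational number depending on $(n_i,n_j,n)$. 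Consequently $\bar{F}^*_{\mathrm{red}}$ is obtained from $\bar{F}_{\mathrm{red}}$ by subdividing each node into a chain of smooth rational curves while preserving every original component.

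Part $1)$ now follows at once: for a connected reduced nodal curve $p_a=\sum_i g(C_i)+h^1(\Gamma)$ with $\Gamma$ the dual graph; edge-subdivision preserves $h^1(\Gamma)$, and the inserted rational components contribute nothing to $\sum_i g(C_i)$, giving $p_a(\bar{F}_{\mathrm{red}})=p_a(\bar{F}^*_{\mathrm{red}})$. The range $0\leq N_{\bar{F}}\leq g$ comes from adjunction $p_a(\bar{F})=g$ on the fiber (as $\bar{F}$ is linearly equivalent to a smooth fiber), the inequality $p_a(\bar{F}_{\mathrm{red}})\leq p_a(\bar{F})$ for the effective subdivisor $\bar{F}_{\mathrm{red}}$, and $p_a\geq 0$ for a connected reduced curve.

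For Part $2)$, as a local invariant $\chi_F$ is given by an explicit formula (to be established in Section~\ref{S3.3}) of the form $\chi_F=\sum_i\alpha(n_i,g(C_i))+\sum_{\mathrm{nodes}}\beta(n_i,n_j)$, with sums over the components and nodes of $\bar{F}$. Applying this to $\bar{F}^*$, the first sum runs over the preserved components (with unchanged $(n_i,g(C_i))$) together with the rational chain components (each with $g=0$ and multiplicities prescribed by the Hirzebruch--Jung resolution), and the second sum runs over the subdivided nodes. The desired equality $\chi_F+\chi_{F^*}=N_{\bar{F}}$ then reduces to two elementary identities, one per multiple component (collecting $\alpha$-contributions from $\bar{F}$ and $\bar{F}^*$ to recover a $1-g(C_i)$ summand) and one per node (collecting the $\beta$-contribution from $\bar{F}$ with the chain contributions from $\bar{F}^*$ to recover a $-1$ summand); these, together with a constant $g-1$, reassemble into $g-p_a(\bar{F}_{\mathrm{red}})$. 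Both identities are verifiable by direct computation from the continued-fraction data of the Hirzebruch--Jung chain.

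Part $3)$ then follows from Part $2)$ once the single inequality $\chi_F\geq\tfrac16 N_{\bar{F}}$ is proved, since the upper bound reads $\chi_F=N_{\bar{F}}-\chi_{F^*}\leq\tfrac56 N_{\bar{F}}$. This lower bound is proved by the same local decomposition: each $\alpha$- and $\beta$-contribution is bounded below by $\tfrac16$ of its corresponding contribution to $N_{\bar{F}}$, and equality forces every local term to be extremal, which a case analysis identifies with $F$ being reduced and having at worst ordinary cusps or nodes; the upper-bound equality case transfers by duality. The main obstacle is pinning down the closed forms of $\alpha$ and $\beta$ and then verifying the combinatorial identities in Part $2)$ and the per-piece inequalities in Part $3)$; both rest on matching local $\chi_F$ contributions with the Hirzebruch--Jung resolution of $x^{n_i}y^{n_j}=w^n$, and the bookkeeping of these local pieces is where I expect the main work to lie.
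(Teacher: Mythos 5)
Your overall strategy is the same as the paper's: pass to the normal crossing model, observe that $\bar F^*$ is obtained from $\bar F$ by inserting a Hirzebruch--Jung chain of rational curves at each node (this gives part 1, the paper's Lemma \ref{pa}), express $\chi_F$ by a local formula over the components and nodes of $\bar F$ (the paper proves $\chi_F=\tfrac12 N_{\bar F}-\sum_{i<j}\chi(n_i,n_j)\,C_iC_j$ with $\chi(p,q)=\tfrac1{12}\bigl(\tfrac pq+\tfrac qp+\tfrac{(p,q)^2}{pq}\bigr)-\tfrac14$), and deduce the upper bound in part 3 from the lower bound via part 2. The genuine gap is in part 2: you assert that the per-node identity --- the node's contribution in $\bar F$ combining with the contributions of the inserted chain in $\bar F^*$ to give the right constant --- is ``verifiable by direct computation from the continued-fraction data.'' That identity is the entire content of the duality theorem, and it is \emph{false} for a general base change $w=z^n$; it holds precisely because $n\equiv-1\pmod{M_F}$, and your write-up never says where this hypothesis enters. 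In the paper it enters through Lemma \ref{gamma}: the chain multiplicities satisfy $\gamma_k\mid\gamma_{k-1}+\gamma_{k+1}$ for interior $k$ and, crucially, $n_i\mid\gamma_1+n_j$ and $n_j\mid\gamma_r+n_i$ at the two ends (proved by Cramer's rule using $n_i\mid n+1$). These divisibilities force cancellations of Dedekind sums, $s(\gamma_{k-1},\gamma_k)+s(\gamma_{k+1},\gamma_k)=0$, and Dedekind's reciprocity law $s(p,q)+s(q,p)=\chi(p,q)$ then telescopes $\sum_k\chi(\gamma_{k-1},\gamma_k)\Gamma_{k-1}\Gamma_k$ to $-\chi(n_i,n_j)$. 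Without this arithmetic mechanism (or an equivalent one) the chain multiplicities are not explicit enough to sum in closed form, so the ``direct computation'' cannot be closed as stated.

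A secondary weakness is in part 3. The paper does not prove the lower bound $12\chi_F\geq 2N_{\bar F}$ by a purely local per-term comparison: it writes $12\chi_F=2N_F+(F-F_{\mathrm{red}})K_X+(\mu_F-\alpha_F-\beta_F)+2N_{\bar F}+\sum_i(m_i-2)(m_i-3)$ and uses the global positivity $(F-F_{\mathrm{red}})K_X\geq0$ (relative minimality) together with the singularity inequality $\mu_p\geq\alpha_p+\beta_p$ of Lemma \ref{mualphabeta}, whose equality case ($A_1$ or $A_2$ only) is what yields your characterization of $\chi_F=\tfrac16 N_{\bar F}$. Your claim that each local $\alpha$- and $\beta$-contribution is bounded below by $\tfrac16$ of ``its corresponding contribution to $N_{\bar F}$'' is not set up precisely enough to check, and the presence of the term $(F-F_{\mathrm{red}})K_X$ indicates that some non-local input about the multiple components is genuinely needed. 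Your reduction of the upper bound to the lower bound applied to $F^*$ is correct and is exactly the paper's argument.
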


In general, $F^{**}$ is not necessarily equal to $F$, but we have
the equality $\chi_{F^{**}}=\chi_F$.

\begin{theorem}\label{THMnew1.2} Assume that $g\geq 2$.  We have the following optimal inequalities.
\begin{enumerate}
\item[$1)$] If $F$ is not semistable, then $c_2(F)\ge
\frac{11}{6}$ and $\chi_F\ge \frac{1}{6}$. One of the equalities
holds if and only if $F$ is a reduced curve with one ordinary cusp
and some nodes.

\item[$2)$] $c_1^2(F)\leq 4g-\frac{24}5$. More precisely, if $g\geq 7$ or $g=5$, then $c_1^2(F)\leq
4g-\frac{11}2.$
$$
c_1^2(F)\leq\footnotesize\begin{cases}
\frac{16}{5}, & g=2\\
7, & g=3\\
\frac{54}{5}, & g=4\\
\frac{130}{7}, & g=6
\end{cases}
$$

\item[$3)$] {\rm (Arakelov type inequality)} \
$\chi_F\leq \frac{5g}{6}, $ with equality iff $F^*$ is a reduced
curve with nodes and ordinary cusps as its singularities, and its
normal crossing model is a tree of smooth rational curves.
\end{enumerate}
\end{theorem}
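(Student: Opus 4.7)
The plan is to derive all three statements from Theorem \ref{THMnew1.1}, Noether's equality $c_1^2(F)+c_2(F)=12\chi_F$, the Miyaoka--Yau type inequality $c_1^2(F)\le 8\chi_F$, the canonical class inequality $c_1^2(F)\le 4g-4$, and the explicit local formulas for the Chern numbers collected in \S~\ref{S3.3}.

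Part 3) is the most direct: combining the upper bound $\chi_F\le\frac{5}{6}N_{\bar F}$ from Theorem \ref{THMnew1.1}.3) with $N_{\bar F}\le g$ yields $\chi_F\le 5g/6$ immediately. Equality forces $N_{\bar F}=g$, i.e.\ $p_a(\bar F_{\rm red})=0$, which for a connected normal-crossings curve is exactly the statement that $\bar F_{\rm red}$ is a tree of smooth rational curves; by Theorem \ref{THMnew1.1}.1) the same holds for $\bar F^*_{\rm red}$, and the sharp case of Theorem \ref{THMnew1.1}.3) says that $F^*$ is reduced with at worst nodes and ordinary cusps. For part 1), the hypothesis $g\ge 2$ and positivity force $\chi_F>0$ whenever $F$ is not semistable; Theorem \ref{THMnew1.1}.3) then gives $\chi_F\ge N_{\bar F}/6$, and since $N_{\bar F}$ is a non-negative integer we must have $N_{\bar F}\ge 1$, hence $\chi_F\ge 1/6$. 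Equality requires $N_{\bar F}=1$ together with the sharp case of Theorem \ref{THMnew1.1}.3), pinning $F$ down as a reduced curve with a single ordinary cusp and some nodes. The companion bound $c_2(F)\ge 11/6$ does \emph{not} follow formally from $\chi_F\ge 1/6$ (Noether plus Miyaoka--Yau only give $c_2\ge 4\chi_F=2/3$), so I would argue via the local formulas of \S~\ref{S3.3}: an ordinary cusp contributes $(c_1^2,c_2,\chi)=(\tfrac16,\tfrac{11}{6},\tfrac16)$, a node contributes zero, and every other admissible non-semistable local feature contributes strictly more than $11/6$ to $c_2$; additivity closes the case and confirms that the equality description coincides with the one for $\chi_F=1/6$.

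Part 2) is the real obstacle, since the content is to sharpen the \emph{already known} canonical class bound $c_1^2(F)\le 4g-4$ by the additive constant $4/5$ (and more for most small $g$). A naive combination of parts 1) and 3) via Noether only gives $c_1^2(F)\le 10g-11/6$, which is far too weak for $g\ge 3$, so part 2) cannot be obtained from parts 1) and 3) alone. I would proceed by analysing the extremal case of the canonical class inequality and quantifying the defect: write $\bar F=n_1C_1+\cdots+n_kC_k$ and split according to whether $M_F=1$ (so $F$ is reduced) or $M_F\ge 2$. In the reduced case, \S~\ref{S3.3} expresses $c_1^2(F)$ as a sum of local contributions at the singularities; enumerating the singularities beyond the cusp/node level should yield $c_1^2(F)\le 4g-24/5$, with the sharper $4g-11/2$ available except in a few small genera. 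In the non-reduced case, the dual fiber $F^*$ carries information complementary to $F$ via $\chi_F+\chi_{F^*}=N_{\bar F}\le g$, and combining the canonical class inequality applied to both $F$ and $F^*$ with the Arakelov bound of part 3) should give the required sharpening. The small-$g$ exceptional values $16/5,7,54/5,130/7$ (and the failure of the uniform bound $4g-11/2$ exactly for $g=2,3,4,6$) should emerge from the finite list of extremal configurations realisable in those genera, constrained by $N_{\bar F}\le g$ and the integrality of the local Chern contributions.
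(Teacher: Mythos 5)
Your part 3) is exactly the paper's argument (the bound $\chi_F\le\frac56 N_{\bar F}$ from Theorem \ref{THMnew1.1} together with $N_{\bar F}\le g$, plus the equality case of Theorem \ref{THMnew1.1}.3)), and your route to $\chi_F\ge\frac16$ in part 1) is essentially sound, with one step to repair: $\chi_F>0$ plus the \emph{lower} bound $\chi_F\ge N_{\bar F}/6$ does not force $N_{\bar F}\ge1$ (that inequality is vacuous when $N_{\bar F}=0$); you need the duality $\chi_F+\chi_{F^*}=N_{\bar F}$, or equivalently the upper bound $\chi_F\le\frac56N_{\bar F}$, to rule out $N_{\bar F}=0$, after which integrality gives $N_{\bar F}\ge1$. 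This is in fact a cleaner derivation than the paper's, which obtains both $\chi_F\ge\frac16$ and $c_2(F)\ge\frac{11}{6}$, with the equality characterization, as a corollary of the full classification in Theorem \ref{THMnew1.4}.

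The genuine gaps are the $c_2$ bound in part 1) and all of part 2). For $c_2(F)\ge\frac{11}{6}$, your plan of summing local contributions of singularities of $F_{\rm red}$ misses the non-reduced case entirely: a multiple fiber $F=nF_{\rm red}$ with $F_{\rm red}$ nodal is non-semistable yet has only nodes, and there $c_2(F)=2N_F$ (all $\beta_p=1$, $\beta_F^-=0$), so you must separately prove $N_F\ge1$ for non-reduced fibers of genus $\ge2$; in the reduced case you need the quantitative statement that every non-nodal singularity contributes $\mu_p-\beta_p+\beta_p^-\ge\frac{11}{6}$ with equality only for the ordinary cusp, which is close to but not literally Lemma \ref{mualphabeta} and would have to be proved. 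Part 2) is not proved at all as written: you correctly observe that it cannot be deduced from parts 1), 3) and Noether, but your substitute is a strategy ("enumerating the singularities\ldots should yield\ldots", "should give the required sharpening", "should emerge from the finite list") rather than an argument. In the paper this is precisely the content of Theorem \ref{THMnew1.3}, whose proof occupies all of \S5 (the reduction to $p_a(\bar F_{\rm red})\le1$, the analysis of the unique $(-3)$-curve against $ADE$ configurations and fundamental cycles, repeated use of Zariski's lemma and Artin's negativity criterion), followed by the explicit Chern numbers of the resulting $22$ fibers in \S5.5, from which the exceptional values $\frac{16}{5},7,\frac{54}{5},\frac{130}{7}$ and the uniform bound $4g-\frac{24}{5}$ are read off. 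Your proposed shortcut of applying the canonical class inequality to both $F$ and $F^*$ is unlikely to work, since $c_1^2$ satisfies no duality analogous to $\chi_F+\chi_{F^*}=N_{\bar F}$. So parts 1) (for $c_2$) and 2) remain open in your write-up.
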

For any $g\geq 2$, there is a numerical fiber $F$ with
$c_1^2(F)=4g-\frac{11}2$ (see Example \ref{LiZi}).
\begin{theorem}\label{THMnew1.3} Let $F$ be a minimal singular fiber of
genus $g\geq 2$ satisfying $c_1^2(F)> 4g-\frac{11}2$. Then $g\leq 6$
and $F$ is one of the following $22$ fibers. $\circ$ is a
$(-2)$-curve, and $\bullet$ is a $(-3)$-curve.
\end{theorem}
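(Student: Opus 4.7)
The plan is to combine the genus-dependent bounds of Theorem \ref{THMnew1.2}(2) with the explicit Chern number formulas from \S~\ref{S3.3} and a combinatorial analysis of the normal crossing model $\bar F = \sum_{i=1}^k n_i C_i$.

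First, the hypothesis $c_1^2(F) > 4g - \tfrac{11}{2}$ is immediately incompatible with $g \geq 7$ and with $g = 5$ by Theorem \ref{THMnew1.2}(2), so $g \in \{2, 3, 4, 6\}$. Combining the hypothesis with the sharp genus-specific upper bounds confines $c_1^2(F)$ to a short interval in each case:
\begin{equation*}
\tfrac{5}{2} < c_1^2(F) \leq \tfrac{16}{5}, \quad \tfrac{13}{2} < c_1^2(F) \leq 7, \quad \tfrac{21}{2} < c_1^2(F) \leq \tfrac{54}{5}, \quad \tfrac{37}{2} < c_1^2(F) \leq \tfrac{130}{7}
\end{equation*}
for $g = 2, 3, 4, 6$ respectively. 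The Miyaoka-Yau type inequality $c_1^2(F) \leq 8\chi_F$ together with Noether's equality then confine $\chi_F$ and $c_2(F)$ into correspondingly narrow windows.

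Second, I would translate the numerical restrictions into structural ones. Using the formulas in \S~\ref{S3.3} and the adjunction identity
\begin{equation*}
2g - 2 = \bar F \cdot K_X = \sum_{i=1}^k n_i \bigl(2 p_a(C_i) - 2 - C_i^2\bigr),
\end{equation*}
the tight $c_1^2$-budget forces every component $C_i$ to be a smooth rational curve with $C_i^2 \in \{-2, -3\}$, so that the weighted count $\sum_{i:\,C_i^2 = -3} n_i$ is exactly $2g - 2$. Combined with $\bar F \cdot C_i = 0$ for each $i$ and the no-$(-1)$-curve minimality hypothesis, this bounds the number of components and the multiplicities $n_i$, producing only a finite list of candidate weighted dual graphs.

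Third and hardest, I would enumerate those candidates. For each $g \in \{2, 3, 4, 6\}$ one goes through every connected weighted configuration meeting the above constraints, computes $c_1^2(F)$ from the formulas of \S~\ref{S3.3}, and keeps the configurations landing in the admissible interval; existence of each survivor is verified by realizing it as a fiber of a suitable base change of a known fibration. The main obstacle is the combinatorial case analysis for $g = 4$ and especially $g = 6$, where several multiplicity patterns for the $(-3)$-components remain compatible with the loose numerical bracket and must be separated using the finer local formulas in \S~\ref{S3.3} to extract exactly the $22$ fibers listed.
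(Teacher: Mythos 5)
Your opening step is circular. In the paper the logical order is the reverse of yours: Theorem \ref{THMnew1.2}, 2) --- including the exclusion of $g\geq 7$ and $g=5$ and the genus-specific bounds $\frac{16}{5},\,7,\,\frac{54}{5},\,\frac{130}{7}$ --- is a \emph{consequence} of Theorem \ref{THMnew1.3}, obtained by computing the Chern numbers of the $22$ classified fibers (the tables in \S~5.5); the paper says explicitly that it proves Theorem \ref{THMnew1.3} first and that this implies Theorem \ref{THMnew1.2}, 2). So you cannot invoke those bounds to restrict $g$ to $\{2,3,4,6\}$ at the outset; the bound $g\leq 6$ has to emerge from the classification. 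The paper's actual starting point is to rewrite the hypothesis $c_1^2(F)>4g-\frac{11}{2}$, via $c_1^2(F)=4N_F+F_{\rm red}^2+\alpha_F-\beta_F^-$, as
\begin{equation*}
4p_a(\bar F_{\rm red})-F_{\rm red}^2+\beta_F^{-}+\sum_{i} m_i(m_i-2)<\frac{11}{2},
\end{equation*}
a sum of non-negative local terms; this forces $\bar F$ to be a tree of smooth rational curves, $p_a(F_{\rm red})\leq 1$, and eventually $F_{\rm red}K_X=1$ in the main case.

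There are two further gaps. First, your structural conclusion that every component is a smooth rational curve with $C_i^2\in\{-2,-3\}$ is false for the very list you must produce: fiber $21$ has components of self-intersection $-5$, $-1$ and $-4$ (it is the normal crossing model of a reduced fiber with an $A_3$ singularity, the case $p_a(F_{\rm red})=1$ in the paper), so your constraint would wrongly exclude it. Second, even granting the structure ``one $(-3)$-curve plus $(-2)$-curves'', the enumeration you describe is not finite: trees of $(-2)$-curves attached to a single $(-3)$-curve can a priori be arbitrarily large, with arbitrarily long chains. The paper's termination mechanism, absent from your outline, is the bound $\beta_F^{-}<\frac{5}{2}$: each Hirzebruch--Jung branch ending in a $(-2)$-curve contributes at least $\frac{1}{2}$ to $\beta_F^{-}$, which bounds the number of end points of the dual graph; Zariski's lemma then converts each remaining configuration into a Diophantine equation with finitely many solutions, and Artin's negativity criterion (Lemma \ref{Artin}) eliminates the degenerate configurations. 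This is the heart of the proof, and without it the ``case analysis'' in your third step does not terminate.
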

 \setlength{\unitlength}{0.7mm}

\begin{picture}(200,10)(-4.8,-1)\footnotesize
\put(-35,0.5){\makebox(0,0)[l]{}}
 \multiput(0,0)(7,0){10}{\circle{1.5}}\multiput(1,0)(7,0){9}{\line(1,0){5}}
 \multiput(64.5,0)(1,0){7}{\circle*{0.5}}
\multiput(72,0)(7,0){2}{\circle{1.5}}\put(73,0){\line(1,0){5}}
\multiput(42,7)(7,0){2}{\circle{1.5}}
\multiput(42,1.05)(0,7){1}{\line(0,1){5}}
\multiput(43,7)(0,7){1}{\line(1,0){5}}
 \put(49,0){\circle*{1.5}}
\put(-0.5,-4){\makebox(0,0)[l]{$3$}}
\put(6.2,-4){\makebox(0,0)[l]{$6$}}
\put(13,-4){\makebox(0,0)[l]{$9$}}
\put(19,-4){\makebox(0,0)[l]{$12$}}
\put(26,-4){\makebox(0,0)[l]{$15$}}
\put(33,-4){\makebox(0,0)[l]{$18$}}
\put(40,-4){\makebox(0,0)[l]{$21$}}
\put(36,7){\makebox(0,0)[l]{$14$}}\put(51,7){\makebox(0,0)[l]{$7$}}
\put(46,-4){\makebox(0,0)[l]{$10$}}
\put(55,-4){\makebox(0,0)[l]{$9$}}
\put(62,-4){\makebox(0,0)[l]{$8$}}
\put(71,-4){\makebox(0,0)[l]{$2$}}
\put(78,-4){\makebox(0,0)[l]{$1$}}
\put(-10,0){\makebox(0,0)[l]{$1)$}}
\end{picture}\\[-23pt]

 \setlength{\unitlength}{0.7mm}
\hfill\begin{picture}(90,0)(6.5,-1)\footnotesize
\put(-35,0.5){\makebox(0,0)[l]{}}
\put(7,0){\circle{1.5}}\put(14,0){\circle*{1.5}}\multiput(8,0)(7,0){3}{\line(1,0){5}}
\multiput(7,0)(7,0){4}{\circle{1.5}} \put(21,7){\circle{1.5}}
\multiput(21,1)(0,7){1}{\line(0,1){5}}
\multiput(22,7)(0,7){1}{\line(1,0){5}}
 \put(28,7){\circle{1.5}}
\multiput(30,0)(1,0){10}{\circle*{0.5}}\multiput(40,0)(7,0){2}{\line(1,0){5}}
\multiput(46,0)(7,0){2}{\circle{1.5}}
\put(6.2,-4){\makebox(0,0)[l]{$3$}}
\put(13,-4){\makebox(0,0)[l]{$6$}}
\put(19,-4){\makebox(0,0)[l]{$15$}}
\put(26,-4){\makebox(0,0)[l]{$14$}}
\put(15,7){\makebox(0,0)[l]{$10$}}\put(30,7){\makebox(0,0)[l]{$5$}}
\put(45,-4){\makebox(0,0)[l]{$2$}}
\put(52,-4){\makebox(0,0)[l]{$1$}}
\put(-3,0){\makebox(0,0)[l]{$2)$}}
\end{picture}\\[-23pt]

 \setlength{\unitlength}{0.7mm}
\begin{picture}(200,20)(-4.8,-1)\footnotesize
\put(0,0){\circle{1.5}}
\multiput(1,0)(7,0){4}{\line(1,0){5}}\multiput(7,0)(7,0){4}{\circle{1.5}}
\put(21,7){\circle*{1.5}}\put(21,1){\line(0,1){5}}
\multiput(30,0)(1,0){10}{\circle*{0.5}}\multiput(40,0)(7,0){2}{\line(1,0){5}}\multiput(46,0)(7,0){2}
{\circle{1.5}} \put(-0.5,-4){\makebox(0,0)[l]{$3$}}
\put(6.2,-4){\makebox(0,0)[l]{$6$}}
\put(13,-4){\makebox(0,0)[l]{$9$}}
\put(19,-4){\makebox(0,0)[l]{$12$}}
\put(26,-4){\makebox(0,0)[l]{$11$}}
\put(24,7){\makebox(0,0)[l]{$4$}} \put(45,-4){\makebox(0,0)[l]{$2$}}
\put(52,-4){\makebox(0,0)[l]{$1$}}
\put(-10,0){\makebox(0,0)[l]{$3)$}}
\end{picture}\\[-23pt]

\setlength{\unitlength}{0.7mm}
\hfill\begin{picture}(100,0)(-10.3,-1)\footnotesize
\put(-35,0.5){\makebox(0,0)[l]{}}
\multiput(0,0)(7,0){12}{\circle{1.5}}\multiput(1,0)(7,0){11}{\line(1,0){5}}
\multiput(21,0)(1,0){1}{\circle*{1.5}}
\multiput(35,7)(0,7){1}{\circle{1.5}}
 \multiput(35,1)(0,7){1}{\line(0,1){5}}
\put(-0.5,-4){\makebox(0,0)[l]{$1$}}
\put(6.5,-4){\makebox(0,0)[l]{$2$}}
\put(13.5,-4){\makebox(0,0)[l]{$3$}}
\put(20,-4){\makebox(0,0)[l]{$4$}}
\put(27,-4){\makebox(0,0)[l]{$9$}}
\put(34,-4){\makebox(0,0)[l]{$14$}}
\put(40.5,-4){\makebox(0,0)[l]{$12$}}
\put(48,-4){\makebox(0,0)[l]{$10$}}
 \put(55,-4){\makebox(0,0)[l]{$8$}} \put(62,-4){\makebox(0,0)[l]{$6$}}
 \put(69,-4){\makebox(0,0)[l]{$4$}} \put(76,-4){\makebox(0,0)[l]{$2$}}
\put(37,7){\makebox(0,0)[l]{$7$}}
\put(-10,0){\makebox(0,0)[l]{$4)$}}
\end{picture}\\[-23pt]


\setlength{\unitlength}{0.7mm}
\hfill\begin{picture}(200,20)(-4.8,-1)\footnotesize
\put(-35,0.5){\makebox(0,0)[l]{}}
\put(42,0){\circle*{1.5}}\multiput(0,0)(7,0){7}{\circle{1.5}}
\multiput(49,0)(7,0){4}{\circle{1.5}}\multiput(1,0)(7,0){10}{\line(1,0){5}}
 \multiput(28,1)(0,7){1}{\line(0,1){5}} \multiput(56,1)(0,7){1}{\line(0,1){5}}
 \multiput(28,7)(0,7){1}{\circle{1.5}} \multiput(56,7)(0,7){1}{\circle{1.5}}
\put(-0.5,-4){\makebox(0,0)[l]{$2$}}\put(6.5,-4){\makebox(0,0)[l]{$4$}}
\put(13.5,-4){\makebox(0,0)[l]{$6$}}
\put(20,-4){\makebox(0,0)[l]{$8$}}
\put(27,-4){\makebox(0,0)[l]{$10$}}
\put(34,-4){\makebox(0,0)[l]{$7$}}
\put(40.5,-4){\makebox(0,0)[l]{$4$}}
\put(48,-4){\makebox(0,0)[l]{$5$}}\put(30,7){\makebox(0,0)[l]{$5$}}
\put(55,-4){\makebox(0,0)[l]{$6$}}\put(62,-4){\makebox(0,0)[l]{$4$}}
\put(69,-4){\makebox(0,0)[l]{$2$}}\put(58,7){\makebox(0,0)[l]{$3$}}
\put(-10,0){\makebox(0,0)[l]{$5)$}}
\end{picture}\\[-23pt]

 \setlength{\unitlength}{0.7mm}
 \hfill\begin{picture}(100,0)(-10.5,-1)\footnotesize
 \put(-35,0.5){\makebox(0,0)[l]{}}
 \multiput(0,0)(7,0){13}{\circle{1.5}}
 \multiput(1,0)(7,0){12}{\line(1,0){5}}
 \multiput(21,0)(1,0){1}{\circle*{1.5}}
 \multiput(28,7)(0,7){1}{\circle{1.5}}
  \multiput(35,7)(0,7){1}{\circle{1.5}}
 \multiput(28,1)(0,7){1}{\line(0,1){5}}
 \multiput(29,7)(0,7){1}{\line(1,0){5}}
\put(-0.5,-4){\makebox(0,0)[l]{$1$}}
\put(6.5,-4){\makebox(0,0)[l]{$2$}}
\put(13.5,-4){\makebox(0,0)[l]{$3$}}
\put(20,-4){\makebox(0,0)[l]{$4$}}
\put(27,-4){\makebox(0,0)[l]{$9$}}
\put(34,-4){\makebox(0,0)[l]{$8$}}
\put(40.5,-4){\makebox(0,0)[l]{$7$}}
\put(48,-4){\makebox(0,0)[l]{$6$}}
 \put(55,-4){\makebox(0,0)[l]{$5$}} \put(62,-4){\makebox(0,0)[l]{$4$}}
 \put(69,-4){\makebox(0,0)[l]{$3$}} \put(76,-4){\makebox(0,0)[l]{$2$}}
 \put(83,-4){\makebox(0,0)[l]{$1$}}\put(24,7){\makebox(0,0)[l]{$6$}}
\put(37,7){\makebox(0,0)[l]{$3$}}
\put(-10,0){\makebox(0,0)[l]{$6)$}}
\end{picture}\\[-23pt]

 \setlength{\unitlength}{0.7mm}
\hfill\begin{picture}(200,20)(-4.5,-1)\footnotesize
\put(-35,0.5){\makebox(0,0)[l]{}}
\multiput(7,0)(7,0){10}{\circle{1.5}} \put(63,7){\circle{1.5}}
\put(0,0){\circle{1.5}} \multiput(1,0)(7,0){11}{\line(1,0){5}}
\put(63,1){\line(0,1){5}}\put(77,0){\circle*{1.5}}
\put(-0.5,-4){\makebox(0,0)[l]{$1$}}
\put(6.2,-4){\makebox(0,0)[l]{$2$}}
\put(13,-4){\makebox(0,0)[l]{$3$}}
\put(20,-4){\makebox(0,0)[l]{$4$}}
\put(27,-4){\makebox(0,0)[l]{$5$}}
\put(34,-4){\makebox(0,0)[l]{$6$}}
\put(41,-4){\makebox(0,0)[l]{$7$}}
\put(48,-4){\makebox(0,0)[l]{$8$}}
\put(55,-4){\makebox(0,0)[l]{$9$}}
\put(61,-4){\makebox(0,0)[l]{$10$}}
\put(69,-4){\makebox(0,0)[l]{$6$}}
\put(76,-4){\makebox(0,0)[l]{$2$}}
 \put(65,7){\makebox(0,0)[l]{$5$}}
 \put(-10,0){\makebox(0,0)[l]{$7)$}}
\end{picture}\\[-23pt]


\setlength{\unitlength}{0.7mm}
\hfill\begin{picture}(100,0)(-10.5,-1)\footnotesize
\put(-35,-0.5){\makebox(0,0)[l]{}}
\multiput(0,0)(7,0){11}{\circle{1.5}}
 \multiput(1,0)(7,0){10}{\line(1,0){5}}\multiput(35,7)(0,7){1}{\circle*{1.5}}
 \multiput(35,1)(0,7){1}{\line(0,1){5}}
\put(-0.5,-4){\makebox(0,0)[l]{$1$}}
\put(6.5,-4){\makebox(0,0)[l]{$2$}}\put(13.5,-4){\makebox(0,0)[l]{$3$}}
\put(20,-4){\makebox(0,0)[l]{$4$}}
\put(27,-4){\makebox(0,0)[l]{$5$}}\put(49,-4){\makebox(0,0)[l]{$4$}}
\put(34,-4){\makebox(0,0)[l]{$6$}}\put(42,-4){\makebox(0,0)[l]{$5$}}
 \put(56,-4){\makebox(0,0)[l]{$3$}}\put(63,-4){\makebox(0,0)[l]{$2$}}
\put(70,-4){\makebox(0,0)[l]{$1$}}\put(36.5,7){\makebox(0,0)[l]{$2$}}
\put(-10,0){\makebox(0,0)[l]{$8)$}}
\end{picture}\\[-23pt]

 \setlength{\unitlength}{0.7mm}
\hfill\begin{picture}(200,20)(-4.5,-1)\footnotesize
\put(-35,0.5){\makebox(0,0)[l]{}}
\multiput(0,0)(7,0){11}{\circle{1.5}} \put(21,7){\circle{1.5}}
  \multiput(1,0)(7,0){10}{\line(1,0){5}}
\put(21,1){\line(0,1){5}}\put(7,0){\circle*{1.5}}
\put(-0.5,-4){\makebox(0,0)[l]{$1$}}
\put(6.2,-4){\makebox(0,0)[l]{$2$}}
\put(13,-4){\makebox(0,0)[l]{$5$}}
\put(20,-4){\makebox(0,0)[l]{$8$}}
\put(27,-4){\makebox(0,0)[l]{$7$}}
\put(34,-4){\makebox(0,0)[l]{$6$}}
\put(41,-4){\makebox(0,0)[l]{$5$}}
\put(48,-4){\makebox(0,0)[l]{$4$}}
\put(55,-4){\makebox(0,0)[l]{$3$}}
\put(62,-4){\makebox(0,0)[l]{$2$}}
\put(70,-4){\makebox(0,0)[l]{$1$}}
 \put(23,7){\makebox(0,0)[l]{$4$}}
 \put(-10,0){\makebox(0,0)[l]{$9)$}}
\end{picture}\\[-23pt]

 \setlength{\unitlength}{0.7mm}
\hfill\begin{picture}(100,0)(-26,-1)\footnotesize
\put(-35,0.5){\makebox(0,0)[l]{}}
\multiput(-16,0)(7,0){2}{\circle{1.5}}\multiput(-15,0)(7,0){1}{\line(1,0){5}}
\multiput(-9,7)(0,7){1}{\circle{1.5}}\multiput(-9,1)(0,7){1}{\line(0,1){5}}
\multiput(-7.5,0)(1,0){7}{\circle*{0.5}}\multiput(0,0)(7,0){4}{\circle{1.5}}
 \multiput(1,0)(7,0){3}{\line(1,0){5}}\multiput(7,1)(0,7){1}{\line(0,1){5}}
 \multiput(7,7)(0,7){1}{\circle*{1.5}}
\put(-16.5,-4){\makebox(0,0)[l]{$3$}}\put(-9.5,-4){\makebox(0,0)[l]{$6$}}
\put(-0.5,-4){\makebox(0,0)[l]{$6$}}
\put(6,-4){\makebox(0,0)[l]{$6$}}
\put(13.5,-4){\makebox(0,0)[l]{$4$}}
\put(20,-4){\makebox(0,0)[l]{$2$}}
\put(-7.5,7){\makebox(0,0)[l]{$3$}}
 \put(8.5,7){\makebox(0,0)[l]{$2$}}
 \put(-28,0){\makebox(0,0)[l]{$10)$}}
\end{picture}\\[-23pt]

\setlength{\unitlength}{0.7mm}
\hfill\begin{picture}(200,20)(-5,-1)\footnotesize
\put(-35,-0.5){\makebox(0,0)[l]{}}
\multiput(0,0)(7,0){1}{\circle{1.5}}
\multiput(7,0)(7,0){1}{\circle*{1.5}}
\multiput(7,7)(0,7){1}{\circle{1.5}}
\multiput(7,1)(0,7){1}{\line(0,1){5}}
\multiput(14,0)(7,0){4}{\circle{1.5}}
 \multiput(1,0)(7,0){5}{\line(1,0){5}}
 \multiput(21,7)(0,7){1}{\circle{1.5}}
 \multiput(28,7)(0,7){1}{\circle{1.5}}
 \multiput(22,7)(0,7){1}{\line(1,0){5}}
 \multiput(21,1)(0,7){1}{\line(0,1){5}}
\put(-0.5,-4){\makebox(0,0)[l]{$1$}}
\put(5.5,-4){\makebox(0,0)[l]{$2$}}
\put(13.5,-4){\makebox(0,0)[l]{$4$}}
\put(20,-4){\makebox(0,0)[l]{$6$}}
\put(27,-4){\makebox(0,0)[l]{$4$}}
\put(34,-4){\makebox(0,0)[l]{$2$}}
\put(9.5,7){\makebox(0,0)[l]{$1$}}
 \put(17,7){\makebox(0,0)[l]{$4$}}\put(30,7){\makebox(0,0)[l]{$2$}}
 \put(-12,0){\makebox(0,0)[l]{$11)$}}
\end{picture}\\[-23pt]

\setlength{\unitlength}{0.7mm}
\hfill\begin{picture}(100,0)(-10,-1)\footnotesize
\put(-35,0.5){\makebox(0,0)[l]{}}
\multiput(0,0)(7,0){1}{\circle{1.5}}\multiput(7,0)(7,0){1}{\circle*{1.5}}
\multiput(7,7)(0,7){1}{\circle{1.5}}\multiput(7,1)(0,7){1}{\line(0,1){5}}
\multiput(14,0)(7,0){6}{\circle{1.5}}
 \multiput(1,0)(7,0){7}{\line(1,0){5}}\multiput(28,7)(0,7){1}{\circle{1.5}}
 \multiput(28,1)(0,7){1}{\line(0,1){5}}
\put(-0.5,-4){\makebox(0,0)[l]{$1$}}
\put(5.5,-4){\makebox(0,0)[l]{$2$}}
\put(13.5,-4){\makebox(0,0)[l]{$4$}}
\put(20,-4){\makebox(0,0)[l]{$6$}}
\put(27,-4){\makebox(0,0)[l]{$8$}}
\put(34,-4){\makebox(0,0)[l]{$6$}}
\put(40.5,-4){\makebox(0,0)[l]{$4$}}
\put(48,-4){\makebox(0,0)[l]{$2$}}\put(9.5,7){\makebox(0,0)[l]{$1$}}
 \put(30,7){\makebox(0,0)[l]{$4$}}
 \put(-12,0){\makebox(0,0)[l]{$12)$}}
\end{picture}\\[-23pt]

\setlength{\unitlength}{0.7mm}
\hfill\begin{picture}(200,20)(-5,-1)\footnotesize
\put(-35,0.5){\makebox(0,0)[l]{}}
\multiput(0,0)(7,0){1}{\circle{1.5}}\multiput(7,0)(7,0){1}{\circle*{1.5}}
\multiput(7,7)(0,7){1}{\circle{1.5}}\multiput(7,1)(0,7){1}{\line(0,1){5}}
\multiput(14,0)(7,0){7}{\circle{1.5}}
 \multiput(1,0)(7,0){8}{\line(1,0){5}}\multiput(42,7)(0,7){1}{\circle{1.5}}
 \multiput(42,1)(0,7){1}{\line(0,1){5}}
\put(-0.5,-4){\makebox(0,0)[l]{$1$}}
\put(5.5,-4){\makebox(0,0)[l]{$2$}}
\put(13.5,-4){\makebox(0,0)[l]{$4$}}
\put(20,-4){\makebox(0,0)[l]{$6$}}
\put(27,-4){\makebox(0,0)[l]{$8$}}
\put(34,-4){\makebox(0,0)[l]{$10$}}
\put(40.5,-4){\makebox(0,0)[l]{$12$}}
\put(48,-4){\makebox(0,0)[l]{$8$}}\put(9.5,7){\makebox(0,0)[l]{$1$}}
\put(55,-4){\makebox(0,0)[l]{$4$}}\put(44,7){\makebox(0,0)[l]{$6$}}
\put(-12,0){\makebox(0,0)[l]{$13)$}}
\end{picture}\\[-23pt]

\setlength{\unitlength}{0.7mm}
\hfill\begin{picture}(100,0)(-26,-1)\footnotesize
\put(-35,-0.5){\makebox(0,0)[l]{}}
\multiput(-16,0)(7,0){2}{\circle{1.5}}\multiput(-15,0)(7,0){1}{\line(1,0){5}}
\multiput(-7.5,0)(1,0){7}{\circle*{0.5}}
 \multiput(0,0)(7,0){1}{\circle{1.5}}
 \multiput(7,0)(7,0){1}{\circle*{1.5}}
\multiput(-9,7)(0,7){1}{\circle{1.5}}\multiput(-9,1)(0,7){1}{\line(0,1){5}}
\multiput(14,0)(7,0){4}{\circle{1.5}}
 \multiput(1,0)(7,0){5}{\line(1,0){5}}
  \multiput(21,7)(0,7){1}{\circle{1.5}}
   \multiput(28,7)(0,7){1}{\circle{1.5}}
 \multiput(21,1)(0,7){1}{\line(0,1){5}}
  \multiput(22,7)(0,7){1}{\line(1,0){5}}
\put(-16.5,-4){\makebox(0,0)[l]{$1$}}
\put(-9.5,-4){\makebox(0,0)[l]{$2$}}
\put(-0.5,-4){\makebox(0,0)[l]{$2$}}
\put(6,-4){\makebox(0,0)[l]{$2$}}
\put(13.5,-4){\makebox(0,0)[l]{$4$}}
\put(20,-4){\makebox(0,0)[l]{$6$}}
\put(27,-4){\makebox(0,0)[l]{$4$}}
\put(34,-4){\makebox(0,0)[l]{$2$}}
\put(-7.5,7){\makebox(0,0)[l]{$1$}}
 \put(17,7){\makebox(0,0)[l]{$4$}}\put(30,7){\makebox(0,0)[l]{$2$}}
 \put(-28,0){\makebox(0,0)[l]{$14)$}}
\end{picture}\\[-23pt]

\setlength{\unitlength}{0.7mm}
\hfill\begin{picture}(200,20)(-21,-1)\footnotesize
\put(-35,0.5){\makebox(0,0)[l]{}}
\multiput(-16,0)(7,0){2}{\circle{1.5}}\multiput(-15,0)(7,0){1}{\line(1,0){5}}
\multiput(-7.5,0)(1,0){7}{\circle*{0.5}}
\multiput(0,0)(7,0){1}{\circle{1.5}}\multiput(7,0)(7,0){1}{\circle*{1.5}}
\multiput(-9,7)(0,7){1}{\circle{1.5}}\multiput(-9,1)(0,7){1}{\line(0,1){5}}
\multiput(14,0)(7,0){6}{\circle{1.5}}
 \multiput(1,0)(7,0){7}{\line(1,0){5}}\multiput(28,7)(0,7){1}{\circle{1.5}}
 \multiput(28,1)(0,7){1}{\line(0,1){5}}
\put(-16.5,-4){\makebox(0,0)[l]{$1$}}\put(-9.5,-4){\makebox(0,0)[l]{$2$}}
\put(-1 ,-4){\makebox(0,0)[l]{$2$}}
\put(6,-4){\makebox(0,0)[l]{$2$}}
\put(13.5,-4){\makebox(0,0)[l]{$4$}}
\put(20,-4){\makebox(0,0)[l]{$6$}}
\put(27,-4){\makebox(0,0)[l]{$8$}}
\put(34,-4){\makebox(0,0)[l]{$6$}}
\put(40.5,-4){\makebox(0,0)[l]{$4$}}
\put(48,-4){\makebox(0,0)[l]{$2$}}\put(-7.5,7){\makebox(0,0)[l]{$1$}}
 \put(30,7){\makebox(0,0)[l]{$4$}}
 \put(-28,0){\makebox(0,0)[l]{$15)$}}
\end{picture}\\[-23pt]

 \setlength{\unitlength}{0.7mm}
\hfill\begin{picture}(100,0)(-26,-1)\footnotesize
\put(-35,0.5){\makebox(0,0)[l]{}}
\multiput(-16,0)(7,0){2}{\circle{1.5}}\multiput(-15,0)(7,0){1}{\line(1,0){5}}
\multiput(-7.5,0)(1,0){7}{\circle*{0.5}}
\multiput(0,0)(7,0){1}{\circle{1.5}}
\multiput(7,0)(7,0){1}{\circle*{1.5}}
\multiput(-9,7)(0,7){1}{\circle{1.5}}\multiput(-9,1)(0,7){1}{\line(0,1){5}}
\multiput(14,0)(7,0){7}{\circle{1.5}}
 \multiput(1,0)(7,0){8}{\line(1,0){5}}\multiput(42,7)(0,7){1}{\circle{1.5}}
 \multiput(42,1)(0,7){1}{\line(0,1){5}}
\put(-16.5,-4){\makebox(0,0)[l]{$1$}}\put(-9.5,-4){\makebox(0,0)[l]{$2$}}
\put(-1 ,-4){\makebox(0,0)[l]{$2$}}
\put(6,-4){\makebox(0,0)[l]{$2$}}
\put(13.5,-4){\makebox(0,0)[l]{$4$}}
\put(20,-4){\makebox(0,0)[l]{$6$}}
\put(27,-4){\makebox(0,0)[l]{$8$}}
\put(34,-4){\makebox(0,0)[l]{$10$}}
\put(40.5,-4){\makebox(0,0)[l]{$12$}}
\put(48,-4){\makebox(0,0)[l]{$8$}}\put(-7.5,7){\makebox(0,0)[l]{$1$}}
\put(55,-4){\makebox(0,0)[l]{$4$}}\put(44,7){\makebox(0,0)[l]{$6$}}
\put(-28,0){\makebox(0,0)[l]{$16)$}}
\end{picture}\\[-23pt]

\setlength{\unitlength}{0.7mm}
\hfill\begin{picture}(200,20)(-5,-1)\footnotesize
\put(-35,-0.5){\makebox(0,0)[l]{}}
 \multiput(28,7)(0,7){1}{\circle*{1.5}}
 \multiput(35,7)(0,7){1}{\circle{1.5}}
 \multiput(28,1)(0,7){1}{\line(0,1){5}}
  \multiput(29,7)(0,7){1}{\line(1,0){5}}
\multiput(0,0)(7,0){9}{\circle{1.5}}
 \multiput(1,0)(7,0){8}{\line(1,0){5}}
\put(-0.5,-4){\makebox(0,0)[l]{$1$}}\put(6.5,-4){\makebox(0,0)[l]{$2$}}
\put(13.5,-4){\makebox(0,0)[l]{$3$}}
\put(20,-4){\makebox(0,0)[l]{$4$}}\put(56,-4){\makebox(0,0)[l]{$1$}}
\put(27,-4){\makebox(0,0)[l]{$5$}}\put(49,-4){\makebox(0,0)[l]{$2$}}
\put(34,-4){\makebox(0,0)[l]{$4$}}\put(42,-4){\makebox(0,0)[l]{$3$}}
 \put(24,7){\makebox(0,0)[l]{$2$}}\put(37,7){\makebox(0,0)[l]{$1$}}
 \put(-12,0){\makebox(0,0)[l]{$17)$}}
\end{picture}\\[-23pt]

\setlength{\unitlength}{0.7mm}
\hfill\begin{picture}(100,0)(-10,-1)\footnotesize
\put(-35,0.5){\makebox(0,0)[l]{}}
\multiput(0,0)(7,0){1}{\circle{1.5}}\multiput(7,0)(7,0){1}{\circle*{1.5}}
\multiput(7,7)(0,7){1}{\circle{1.5}}\multiput(7,1)(0,7){1}{\line(0,1){5}}
\multiput(14,0)(7,0){7}{\circle{1.5}}
 \multiput(1,0)(7,0){8}{\line(1,0){5}}\multiput(21,7)(0,7){1}{\circle{1.5}}
 \multiput(21,1)(0,7){1}{\line(0,1){5}}
\put(-0.5,-4){\makebox(0,0)[l]{$1$}}
\put(5.5,-4){\makebox(0,0)[l]{$2$}}
\put(13.5,-4){\makebox(0,0)[l]{$4$}}
\put(20,-4){\makebox(0,0)[l]{$6$}}
\put(27,-4){\makebox(0,0)[l]{$5$}}
\put(34,-4){\makebox(0,0)[l]{$4$}}
\put(40.5,-4){\makebox(0,0)[l]{$3$}}
\put(48,-4){\makebox(0,0)[l]{$2$}}\put(9.5,7){\makebox(0,0)[l]{$1$}}
\put(55,-4){\makebox(0,0)[l]{$1$}}\put(23,7){\makebox(0,0)[l]{$3$}}
\put(-12,0){\makebox(0,0)[l]{$18)$}}
\end{picture}\\[-23pt]

\setlength{\unitlength}{0.7mm}
\hfill\begin{picture}(200,20)(-21,-1)\footnotesize
\put(-35,0.5){\makebox(0,0)[l]{}}
\multiput(-16,0)(7,0){2}{\circle{1.5}}\multiput(-15,0)(7,0){1}{\line(1,0){5}}
\multiput(-7.5,0)(1,0){7}{\circle*{0.5}}
\multiput(0,0)(7,0){1}{\circle{1.5}}\multiput(7,0)(7,0){1}{\circle*{1.5}}
\multiput(-9,7)(0,7){1}{\circle{1.5}}\multiput(-9,1)(0,7){1}{\line(0,1){5}}
\multiput(14,0)(7,0){6}{\circle{1.5}}
 \multiput(1,0)(7,0){7}{\line(1,0){5}}\multiput(21,7)(0,7){1}{\circle{1.5}}
 \multiput(21,1)(0,7){1}{\line(0,1){5}}
\put(-16.5,-4){\makebox(0,0)[l]{$1$}}\put(-9.5,-4){\makebox(0,0)[l]{$2$}}
\put(-1 ,-4){\makebox(0,0)[l]{$2$}}
\put(6,-4){\makebox(0,0)[l]{$2$}}
\put(13.5,-4){\makebox(0,0)[l]{$4$}}
\put(20,-4){\makebox(0,0)[l]{$6$}}
\put(27,-4){\makebox(0,0)[l]{$5$}}
\put(34,-4){\makebox(0,0)[l]{$3$}}
\put(40.5,-4){\makebox(0,0)[l]{$2$}}
\put(48,-4){\makebox(0,0)[l]{$1$}}\put(-7.5,7){\makebox(0,0)[l]{$1$}}
 \put(23,7){\makebox(0,0)[l]{$3$}}
 \put(-28,0){\makebox(0,0)[l]{$19)$}}
\end{picture}\\[-23pt]

\setlength{\unitlength}{0.7mm}
\hfill\begin{picture}(100,0)(-10,-1)\footnotesize
\put(-35,0.5){\makebox(0,0)[l]{}}
\multiput(0,0)(7,0){6}{\circle{1.5}}\multiput(1,0)(7,0){12}{\line(1,0){5}}
\multiput(42,0)(1,0){1}{\circle*{1.5}}\multiput(49,0)(7,0){6}{\circle{1.5}}
\multiput(14,7)(0,7){1}{\circle{1.5}}\multiput(70,1)(0,7){1}{\line(0,1){5}}
\multiput(70,7)(0,7){1}{\circle{1.5}}\multiput(14,1)(0,7){1}{\line(0,1){5}}
\put(-0.5,-4){\makebox(0,0)[l]{$2$}}
\put(6.5,-4){\makebox(0,0)[l]{$4$}}
\put(13.5,-4){\makebox(0,0)[l]{$6$}}
\put(20,-4){\makebox(0,0)[l]{$5$}}
\put(27,-4){\makebox(0,0)[l]{$4$}}
\put(34,-4){\makebox(0,0)[l]{$3$}}
\put(40.5,-4){\makebox(0,0)[l]{$2$}}
\put(48,-4){\makebox(0,0)[l]{$3$}}\put(16,7){\makebox(0,0)[l]{$3$}}
 \put(55,-4){\makebox(0,0)[l]{$4$}} \put(62,-4){\makebox(0,0)[l]{$5$}}
 \put(69,-4){\makebox(0,0)[l]{$6$}} \put(76,-4){\makebox(0,0)[l]{$4$}}
\put(83,-4){\makebox(0,0)[l]{$2$}}\put(71.5,7){\makebox(0,0)[l]{$3$}}
\put(-12,0){\makebox(0,0)[l]{$20)$}}
\end{picture}\\[-23pt]

 \setlength{\unitlength}{0.7mm}
\hfill\begin{picture}(200,20)(-5,-1)\footnotesize
\put(-35,0.5){\makebox(0,0)[l]{}}
\multiput(0,0)(7,0){3}{\circle*{1.5}}\multiput(21,0)(7,0){2}{\circle{1.5}}
 \multiput(1,0)(7,0){4}{\line(1,0){5}}
\put(7,7){\circle{1.5}}\put(7,1){\line(0,1){5}}
\put(-0.5,-4){\makebox(0,0)[l]{$2$}}
\put(-4.5,3){\makebox(0,0)[l]{$-5$}}
\put(5.5,-4){\makebox(0,0)[l]{$10$}}
\put(1.5,3){\makebox(0,0)[l]{$-1$}}
\put(13.5,-4){\makebox(0,0)[l]{$3$}}
\put(9.5,3){\makebox(0,0)[l]{$-4$}}
\put(19.8,-4){\makebox(0,0)[l]{$2$}}
\put(27,-4){\makebox(0,0)[l]{$1$}}
\put(8.8,7){\makebox(0,0)[l]{$5$}}
\put(-12,0){\makebox(0,0)[l]{$21)$}}
\end{picture}\\[-23pt]

\setlength{\unitlength}{0.7mm}
\hfill\begin{picture}(100,0)(-10,-1)\footnotesize
\put(-35,0.5){\makebox(0,0)[l]{}}
\multiput(0,0)(7,0){6}{\circle{1.5}}\multiput(1,0)(7,0){11}{\line(1,0){5}}
\multiput(42,0)(1,0){1}{\circle*{1.5}}\multiput(49,0)(7,0){5}{\circle{1.5}}
\multiput(14,7)(0,7){1}{\circle{1.5}}\multiput(56,1)(0,7){1}{\line(0,1){5}}
\multiput(56,7)(0,7){1}{\circle{1.5}}\multiput(14,1)(0,7){1}{\line(0,1){5}}
\put(-0.5,-4){\makebox(0,0)[l]{$2$}}
\put(6.5,-4){\makebox(0,0)[l]{$4$}}
\put(13.5,-4){\makebox(0,0)[l]{$6$}}
\put(20,-4){\makebox(0,0)[l]{$5$}}
\put(27,-4){\makebox(0,0)[l]{$4$}}
\put(34,-4){\makebox(0,0)[l]{$3$}}
\put(40.5,-4){\makebox(0,0)[l]{$2$}}
\put(48,-4){\makebox(0,0)[l]{$3$}}\put(16,7){\makebox(0,0)[l]{$3$}}
 \put(55,-4){\makebox(0,0)[l]{$4$}} \put(62,-4){\makebox(0,0)[l]{$3$}}
 \put(69,-4){\makebox(0,0)[l]{$2$}} \put(76,-4){\makebox(0,0)[l]{$1$}}
\put(58,7){\makebox(0,0)[l]{$2$}}
\put(-12,0){\makebox(0,0)[l]{$22)$}}
\end{picture}\\[5pt]

See \S~5.5 for the Chern numbers of these 22 fibers.

\begin{theorem}\label{THMnew1.4} Assume that $g\geq 2$.
  If $ 2c_2(F)-c_1^2(F)<6$, then either $F=nC$ for some smooth curve $C$, or
   $F_{\rm red}$ admits at most one singular point $p$ other than nodes.
   One of the following cases occurs.

\bigskip {\rm I)} \ $F=nF_{\rm red}$.
  \begin{enumerate}
\item[$1)$] \ $F_{\rm red}$ is a smooth or nodal
curve. 
\item[$2)$] \ $p$ is of type
$A_2$. 
\item[$3)$] \   $p$ is of type
$A_3$ and any $(-2)$-curve does not pass through $p$.
\item[$4)$] \  $p$ is of type
$A_3$ and one $(-2)$-curve passes through $p$.

\item[$5)$] \  $p$ is of
type $D_4$. 
\end{enumerate}

{\rm II)} \ $F=nA+2nB$,  $A$ and $B$ are reduced nodal curves
without common components,  $AB=2$, $A^2=-4$ and $B^2=-1$. $A$ has
at most two connected components $A_1$ and $A_2$.
  \begin{enumerate}
\item[$6)$] \  $A\cap B=\{p,q\}$ and any $(-2)$-curve is not a connected component of $A$.

\item[$7)$] \   $A$ has two connected components and one is a $(-2)$-curve.

\item[$8)$] \   $A$ and $B$ are tangent at a point $p$.
\end{enumerate}

The invariants of these fibers $F$ are as follows, where $0\leq
N=g(F)-p_a(F_{\rm red})\leq g$.
 {\center\rm
\begin{tabular}{|c|c|c|c|c|c|c|c|c|c|c|c|c|c|c|}\hline
 \phantom{\footnotesize$\dfrac1{1}$} $F$\phantom{\footnotesize$\dfrac1{1}$} & 1 & 2
  & 3 &4 & 5 & 6
   & 7 & 8 \\
   \hline
 \phantom{$\dfrac11$}$2c_2-c_1^2$\phantom{$\dfrac11$} & $0$ &$\frac{7}{2}$  &$\frac92$  & $\frac{21}{4}$& $5$ &
 $3$ &$\frac{9}{2}$   &$\frac{11}{2}$
 \\ \hline
  \phantom{\footnotesize$\dfrac11$}$c_1^2-4N$\phantom{\footnotesize$\dfrac11$} & $0$ &
  $\frac16$
  & $\frac12$ &$\frac14$ & $1$ & $-1$
   & $-\frac32$ & $-\frac12$ \\
  \hline

  \phantom{$\dfrac11$}$c_2-2N$\phantom{$\dfrac11$} &$0$
  & $\frac{11}{6}$ & $\frac52$ &$\frac{11}4$  &$3$  &$1$&$\frac{3}{2}$  & $\frac52$\\ \hline

  \phantom{$\dfrac11$}$\chi-\frac12N$\phantom{$\dfrac11$} & 0 & $\frac16$ & $\frac14$ &$\frac14$ & $\frac{1}{3}$& 0
   & 0  &$\frac16$ \\ \hline

 \end{tabular}\\[0.3cm]}
\end{theorem}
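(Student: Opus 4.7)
The strategy is to write $2c_2(F)-c_1^2(F)$ as a sum of non-negative local contributions and use the bound $<6$ to enumerate possibilities. By the Miyaoka--Yau type inequality (property (5) above), $2c_2(F)-c_1^2(F)\geq 0$ with equality iff $F=nF_{\rm red}$ and $F_{\rm red}$ is nodal, so we are classifying near-equality fibers; each additional ``bad'' feature of $F$ must contribute a definite positive amount. Using Noether's equality, the condition is equivalent to $c_2(F)-4\chi_F<2$, which is often the more convenient form.

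First, pass to the normal crossing model $\bar F$ via repeated blow-ups and apply the formulas of \S3.3 to compute $c_1^2(F)$, $c_2(F)$, $\chi_F$. For $F=nF_{\rm red}$ the quantity $2c_2(F)-c_1^2(F)$ decomposes as a sum of local contributions $\delta_p$ indexed by the singular points $p$ of $F_{\rm red}$. Direct computation on the minimal resolution graphs of simple singularities should yield $\delta_p=0$ for a node, $\delta_p=7/2$ for $A_2$, $\delta_p=9/2$ or $21/4$ for $A_3$ (depending on whether a $(-2)$-curve meets $p$), and $\delta_p=5$ for $D_4$. Any singularity more complicated than these --- higher $A_k$ outside the $A_3$ configuration, $D_k$ with $k\geq 5$, $E_6,E_7,E_8$, or a non-ADE analytic type --- contributes at least $6$. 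Since two non-node singularities already contribute at least $7$, this yields cases I.1)--I.5) when $F=nF_{\rm red}$.

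Next, when $F$ has components of distinct multiplicities, write $F=\sum n_i C_i$ on the normal crossing model and exploit $F^2=0$ together with the negative semidefiniteness of the intersection matrix. A careful numerical argument should force exactly two multiplicity strata: $F=nA+2nB$ with $AB=2$, $A^2=-4$, $B^2=-1$, and with both $A$ and $B$ reduced nodal having no common component, as required by $F\cdot A=F\cdot B=0$. The three geometric possibilities for $A\cap B$ --- two transverse intersections with $A$ connected, two transverse intersections with $A$ splitting off a single $(-2)$-curve, or a tangency --- produce cases II.6)--II.8). Any further multiplicity stratum, a thicker singularity on $A$ or $B$, or $A$ with more than two connected components is ruled out by showing the resulting local contribution exceeds the remaining budget.

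The main obstacle is the bookkeeping in the second step: one must rule out all configurations with an intermediate multiplicity, with non-node singularities on $A$ or $B$, or with $A$ having a third connected component, by converting each into a local contribution and showing it pushes $2c_2-c_1^2$ above $6$. Once the list is established, the invariant table is verified by direct substitution into the formulas of \S3.3 together with Noether's equality $c_1^2+c_2=12\chi$, with $N=g-p_a(F_{\rm red})$ tracking the ``global'' part and the remaining entries being purely local.
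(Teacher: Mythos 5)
Your strategy is the paper's own: the identity $2c_2(F)-c_1^2(F)=2(\mu_F-\alpha_F-\beta_F)+\alpha_F+3\beta_F^{-}-F_{\rm red}^2$ turns the hypothesis into a budget of $6$ to be spent on non-negative local terms, Lemma \ref{mualphabeta}, 4) then forces every non-nodal singularity of $F_{\rm red}$ to be of type $A_2$, $A_3$ or $D_4$ (and at most one such point, since two already cost at least $7$), and the case $F=nF_{\rm red}$, where $F_{\rm red}^2=0$ and everything is local, goes exactly as you describe; your values $\frac72,\frac92,\frac{21}4,5$ agree with the paper's table.

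The step you defer as ``bookkeeping'' is, however, where most of the proof lives, and your sketch gives no indication of how to carry it out. The paper does not argue stratum-by-stratum on multiplicities. It counts the number $s$ of nodes of $F_{\rm red}$ at which the two local branches have \emph{distinct} multiplicities: for such a node $\beta_q\le\frac12$, hence $\mu_q-\beta_q\ge\frac12$ and each such node costs a definite amount of the budget. Zariski's lemma excludes $s=1$ (writing $F=nA+mB$ with $AB=1$ forces $m=n$ and $F_{\rm red}^2=0$), so when $F_{\rm red}$ is nodal and $F\ne nF_{\rm red}$ one has $2\le s\le 4$; the case $s=2$ produces $F=nA+2nB$ with $AB=2$, $A^2=-4$, $B^2=-1$ (your cases 6 and 7), while $s=3$ is eliminated by solving $F\Gamma_i=0$ in each of three possible intersection patterns and $s=4$ is eliminated by a computation using the expression of $\chi_F$ through the Dedekind-type quantities $\chi(n_i,n_j)$. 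The tangential case 8 likewise requires showing first that the presence of an $A_2$ or $A_3$ point forces $s=0$, i.e.\ all nodes have equal branch multiplicities. None of this is routine substitution, and until it is supplied your argument establishes only Part I of the theorem.
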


Note that $2c_2- c_1^2< 6$ is equivalent to $8\chi- c_1^2<2$. Hence
the fibers satisfying $c_2(F)\leq 3$ or $\chi\leq\frac14$ are
included in the classification list $1)\sim 8)$. For a
non-semistable fiber, $c_1^2$, $c_2$ and $\chi$ are positive.
Therefore, one can check that $\frac{11}6$ (resp. $\frac16$) is the
lower bound of $c_2$ (resp. $\chi$) for non-semistable fibers. All
of the fibers from 2) to 8) can not be the fibers in an isotrivial
family of curves, because their semistable models are not smooth.

\begin{corollary} Let $s$ be the number of singular fibers of $f:X\to C$
and $g\geq 2$.

 {\rm 1)} If  $f$ is non-trivial, then $\chi_f\leq \frac{g}{2}\left(2g(C)-2+\frac{8}{3}s\right)$.

  {\rm 2)} If $f$ is isotrivial, then
$K_f^2\leq \left(4g-\frac{24}{5}\right)s,$ and $\chi_f\leq
\frac{5gs}{6}$.
\end{corollary}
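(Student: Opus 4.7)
The plan is to exploit the decomposition
\[
\chi_f = \lambda(f) + \sum_{i=1}^s \chi_{F_i}, \qquad K_f^2 = \kappa(f) + \sum_{i=1}^s c_1^2(F_i)
\]
supplied by formula \eqref{0.3}, together with the sharp per-fibre bounds proven in Theorem~\ref{THMnew1.2}.

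For part 2), the key observation is that the isotriviality of $f$ forces the moduli map $J\colon C\to\overline{\mathcal M}_g$ to be constant (its smooth fibres being mutually isomorphic), and therefore
\[
\kappa(f)=\delta(f)=\lambda(f)=0.
\]
The identities above then collapse to $K_f^2=\sum_i c_1^2(F_i)$ and $\chi_f=\sum_i \chi_{F_i}$, and both stated inequalities follow by summing the bounds $c_1^2(F_i)\le 4g-\tfrac{24}{5}$ and $\chi_{F_i}\le\tfrac{5g}{6}$ from Theorem~\ref{THMnew1.2}(2)(3) over the $s$ singular fibres.

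For part 1), I would estimate $\chi_f=\lambda(f)+\sum_i\chi_{F_i}$ by combining $\chi_{F_i}\le\tfrac{5g}{6}$ with the classical Arakelov-type inequality
\[
\lambda(f)\;\le\;\frac{g}{2}\bigl(2g(C)-2+s\bigr),
\]
which holds for any non-trivial $f$: it is the Arakelov--Faltings bound in the semistable non-isotrivial case, it extends to the non-semistable situation via a semistable reduction $\pi\colon\widetilde C\to C$ (using $\lambda(\widetilde f)=(\deg\pi)\,\lambda(f)$ and Riemann--Hurwitz applied to $\pi$), and it is automatic in the isotrivial non-trivial case since then $\lambda(f)=0$. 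Adding the two estimates,
\[
\chi_f\;\le\;\frac{g}{2}(2g(C)-2+s)+\frac{5gs}{6}\;=\;\frac{g}{2}\!\left(2g(C)-2+\tfrac{8s}{3}\right),
\]
which is exactly the claimed bound.

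The main obstacle I anticipate is making the Arakelov inequality uniformly valid in the stated generality, and in particular handling the border cases $g(C)=0$ with $s\le 1$, where the right-hand side of the Arakelov estimate is negative. For $g\ge 2$ no non-isotrivial smooth family exists over $\mathbb P^1$, and a single singular fibre does not supply enough monodromy to support a non-isotrivial pencil, so in those cases $f$ must be isotrivial; one then has $\lambda(f)=0$ and concludes by checking directly that the bound of part~2), namely $\chi_f\le\tfrac{5gs}{6}$, already implies the part~1) inequality in the remaining small-$s$ arithmetic. Apart from this edge-case bookkeeping, the corollary is a direct reading of \eqref{0.3} through the Arakelov inequality and Theorem~\ref{THMnew1.2}.
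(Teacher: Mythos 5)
Your proposal is correct and follows essentially the same route as the paper: the modular decomposition from \eqref{0.3}, the per-fibre bounds $c_1^2(F_i)\le 4g-\tfrac{24}{5}$ and $\chi_{F_i}\le\tfrac{5g}{6}$ from Theorem~\ref{THMnew1.2}, vanishing of the modular invariants in the isotrivial case, and for 1) a uniformly ramified semistable reduction combined with the Arakelov inequality $\chi_{\tilde f}\le\tfrac{g}{2}(2\tilde b-2+\tilde s)$. The paper merely packages your two estimates ($\lambda(f)\le\tfrac{g}{2}(2g(C)-2+s)$ plus the fibre sum) into a single telescoping identity, and settles your $g(C)=0$, small-$s$ edge case by the separately proved fact that a non-trivial fibration over $\mathbb P^1$ has at least two singular fibres.
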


As an application of Theorem \ref{THMnew1.4}, we have
\begin{corollary} Assume $f:X\to C$ is isotrivial.  Let $s$ be the number of singular fibers that
 are not multiples of a smooth curve. Then
$K_X^2\leq 8\chi(\mathcal{O}_X)-2s. $
\end{corollary}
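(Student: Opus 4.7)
The plan is to combine the decomposition formula (\ref{0.3}) with the classification of low-$(2c_2-c_1^2)$ fibers in Theorem~\ref{THMnew1.4}.

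First I will use that $f$ being isotrivial forces the moduli map $J\colon C\to\overline{\mathcal M}_g$ to be constant, so that the modular invariants $\kappa(f),\delta(f),\lambda(f)$ all vanish. If $F_1,\dots,F_t$ denote all singular fibers of $f$, then subtracting the $c_1^2(X)$-equation of (\ref{0.3}) from eight times the $\chi(\mathcal O_X)$-equation yields
\[
8\chi(\mathcal O_X)-K_X^2=\sum_{i=1}^{t}\bigl(8\chi_{F_i}-c_1^2(F_i)\bigr)=\tfrac13\sum_{i=1}^{t}\bigl(2c_2(F_i)-c_1^2(F_i)\bigr),
\]
the second equality being Noether's equality $c_1^2+c_2=12\chi$ applied fiberwise. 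Every summand is non-negative by the Miyaoka--Yau-type inequality listed among the basic properties of Chern numbers of a fiber.

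The heart of the proof will be to show that each singular fiber $F_i$ which is \emph{not} a multiple of a smooth curve satisfies $2c_2(F_i)-c_1^2(F_i)\geq 6$. I will argue by contradiction: if $2c_2(F)-c_1^2(F)<6$, then Theorem~\ref{THMnew1.4} places $F$ into one of the eight listed types. The remark following Theorem~\ref{THMnew1.4} already rules out types 2)--8) in an isotrivial family because their semistable models fail to be smooth, whereas an isotrivial family admits a smooth model after its trivialising base change. The same idea will dispose of type~1) with $F_{\rm red}$ strictly nodal: locally near a node of $F_{\rm red}$, with $f=g^n$ and $g$ a local equation of $F_{\rm red}$, the degree-$n$ base change $z=w^n$ turns the total space into $\{g^n=w^n\}=\bigcup_{\zeta}\{g-\zeta w=0\}$, a union of $n$ smooth sheets whose common special fiber is the nodal curve $\{g=0\}$; no further base change can smooth these nodes, so the semistable model cannot be smooth, again contradicting isotriviality. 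Hence in the isotrivial setting, type~1) forces $F_{\rm red}$ to be smooth, i.e., $F$ is a multiple of a smooth curve.

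Putting the pieces together, multiples of a smooth curve contribute $0$ to the sum (case~1) of the table in Theorem~\ref{THMnew1.4} with $F_{\rm red}$ smooth gives $c_1^2=4N$, $\chi=N/2$, so $8\chi-c_1^2=0$), while each of the remaining $s$ singular fibers contributes at least $2$. This yields $8\chi(\mathcal O_X)-K_X^2\geq 2s$, which rearranges to the desired $K_X^2\leq 8\chi(\mathcal O_X)-2s$. The main obstacle is the exclusion of type~1) with a nodal reduction in the isotrivial case; the other seven types are already covered by the paper's own remark.
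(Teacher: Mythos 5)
Your proof is correct and follows essentially the same route as the paper: the vanishing of the modular invariants for an isotrivial family, the resulting decomposition $8\chi(\mathcal O_X)-K_X^2=\tfrac13\sum_i\bigl(2c_2(F_i)-c_1^2(F_i)\bigr)$, and then Theorem~\ref{THMnew1.4} combined with the observation that types 2)--8) and type 1) with nodal reduction have non-smooth semistable models, which is exactly the content of Corollary~\ref{last}. Your explicit local computation excluding multiples of nodal curves in the isotrivial case is a detail the paper leaves implicit, but it is not a new idea.
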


This gives a new proof of Polizzi's theorem that $K_X^2\neq
8\chi(\mathcal{O}_X)-1$ when $f:X\to C$ is isotrivial \cite{Po08}.
We will give some other applications of the main results in each
section.

\section{Dual models $F^*$ of a fiber $F$}

We recall several models of a singular fiber in this section,
including the minimal model, normal crossing model, $n$-th root
model, semistable model, and the dual model.
\subsection{Normal crossing model.}
A curve $B$ on $X$ is a nonzero effective divisor.

\begin{definition}  A {\it partial
resolution} of the singularities of $B$ is a sequence of blowing-ups
$\sigma=\sigma_1\circ\sigma_2\circ \cdots \circ\sigma_r:$ $ \bar X
\to X$
$$
 (\bar
X,\sigma^* B)=(X_r,B_r)\overset{\sigma_r}\longrightarrow X_{r-1}
\overset{\sigma_{r-1}}\longrightarrow \cdots
\overset{\sigma_2}\longrightarrow (X_1,B_1)
\overset{\sigma_1}\longrightarrow (X_0,B_0)=(X,B),
$$
satisfying the following conditions:

(i) \ $B_{r,\textrm{red}}$ has at worst ordinary double points as
its singularities.

(ii) \,$B_i=\sigma_i^*B_{i-1}$ is the total transform of $B_{i-1}$.

\smallskip \noindent Furthermore,  $\sigma$ is called the {\it
minimal partial resolution} of the singularities of $B$ if
\smallskip

(iii) $\sigma_i$ is the blowing-up of $X_{i-1}$ at a singular point
$(B_{i-1,\textrm{red}},p_{i-1})$ which is not an ordinary double
point for any $i\leq r$.
\end{definition}

The {\it minimal model} of $F$ is obtained by contracting all
$(-1)$-curves in $F$. Denote by $\bar F$ the partial resolution
 of the singularities of the minimal model of $F$.

\begin{definition}  $\bar F$ is called the {\it normal crossing model} of
$F$. If $\sigma$ is minimal, then we say that $\bar F$ is the {\it
minimal normal crossing model}.
\end{definition}

A $(-1)$-curve in $\bar F$ is called {\it redundant} if it meets the
other components in at most two points. It is obvious that a
redundant $(-1)$-curve can be contracted without introducing
singularities worse than ordinary double points. The minimal normal
crossing model of $F$ contains no redundant $(-1)$-curves, and it
can be obtained from any normal crossing model by contracting all
redundant $(-1)$-curves. In fact, the minimal normal crossing model
of $F$ is determined uniquely by $F$.

\subsection{$n$-th root model and the semistable model of $F$.}
 Let $\pi:\widetilde C\to C$ be a base change of degree $n$. Then we
 can construct the pullback fibration $\widetilde f:\widetilde X\to
 \widetilde C$ of $f:X\to C$ as follows.

$$\xymatrix{
  \widetilde X \ar[drr]_{\widetilde f}  &\ar[l]_{\tau} X'{\ar@/{^}/[rr]^\Pi} \ar[dr]^{f'} \ar[r]_{\pi_2}
   & X_1 \ar[d]^{f_1}
  \ar[r]_{\hskip0.4cm \pi_1} & X \ar[d]^{f} \\
   &   &  \widetilde C \ar[r]^{\pi} & C  }
$$
where $X_1=X\times_C\widetilde C$, $\pi_1$ and $f_1$ are the
projections. $X'$ is the minimal resolution of the singularities of
the normalization of $X_1$. $\tau$ is the contraction of those
$(-1)$-curves in the fibers. Then we get the pullback fibration
$\widetilde f$ of $f$ under the base change $\pi$.

Now we consider the above construction locally. Let $F$ be a fiber
of $f$ over $p\in C$. Assume that $\pi$ is totally ramified over
$p$, i.e., $\pi^{-1}(p)$ contains only one point $\widetilde p$. In
this case, $\pi$ is defined locally by $z=w^n$ near $p=0$.

Now denote by $\widetilde F$ (resp. $F'$) the fiber of $\widetilde
f$ (resp. $f'$) over $\widetilde p\in \widetilde C$. In fact,
$F'=\frac1n\Pi^*(F)$.
\begin{definition} The fiber $\widetilde F$ of $\widetilde f$ over $\widetilde p$
is called the {\it $n$-th root model} of $F$.
\end{definition}

Note that $F$ and any of its normal crossing model $\bar F$ have the
same $n$-th root model $\widetilde F$ for any $n$. In fact, if $F$
is normal crossing, then $F'$ is also normal crossing. In
particular, $\bar F'$ is the normal crossing model of $\widetilde
F$.

Indeed, we can assume that $F=\bar F=\sum_{i=1}^k n_iC_i$ is normal
crossing, where $C_i$ is irreducible. Let $p$ be a singular point of
$ F_{\rm red}$. Without loss of generality, we assume that $p$ is an
intersection point of $C_i$ with $C_j$. Near $p$, $\pi_1$ is defined
locally by $z^n=x^{n_i}y^{n_j}$. Then we see that the singularities
of the normalization of $X_1$ are of Hirzebruch-Jung type. Hence,
$F'$ is normal crossing. By the computation of the normalization, we
see that the multiplicity of the strict transform of $C_i$ in $F'$
is $n_i/\gcd(n,n_i)$.

If $n_i$ divides $n$ for any $i$, then one can prove that $F'$ and
$\widetilde F$ are semistable. This is the famous Semistable
Reduction Theorem.  Denote by $M_F=\textrm{lcm}\{n_1,\cdots,n_k\}$.
Then the $n$-th root model of $F$ is always semistable for any $n$
satisfying $n\equiv 0\pmod{M_F}$.

\begin{definition} If $\widetilde F$ is semistable, then
$\widetilde F$ is called the {\it semistable model} of $F$, or the
{\it semistable reduction} of $F$.
\end{definition}

\subsection{Dual model $F^*$ of $F$}\label{Sect2.3}
\begin{definition} If $n\equiv -1 \pmod{M_F}$, then the $n$-th root model
of $F$ is called the {\it dual model} of $F$, denoted by $F^*$.
\end{definition}
The dual model is introduced first by Kodaira for elliptic
fibrations. Our definition is a natural generalization. In general,
$(F^*)^*$ doesn't coincide with $F$ unless
 the semistable model of $F$ is smooth. (If the uniqueness of the dual model is
 needed,  one may choose $n$ to be the minimal positive integer
 satisfying $n\equiv -1 \pmod{M_F}$).

Let ${\bar{F}}=\sum_{i=1}^{k}n_iC_i$ be the minimal normal crossing
model of $F$, where $C_i$'s are all irreducible components. We have
seen that $\bar F'$ is the normal crossing model of $F^*$.

Let $n\equiv-1\pmod{M_F}$. Denote by $C_i^*$ the strict transform of
$C_i$ in $\bar F'$. Because $n_i$ is prime to $n$ for any $i$,
$C_i^*$ is irreducible. The multiplicity of $C_i^*$ in $\bar F'$ is
still $n_i$. By the resolution of Hirzebruch-Jung singularities, we
see that $\bar F'$ is obtained by inserting a chain of rational
curves.
$$
\bar F'=\sum_{i=1}^kn_iC_i^*+\sum_p\Gamma_p^*,
$$
where $p$ runs over all double points of $\bar F$,
$\Gamma_p^*=\sum_{i=1}^r\gamma_i\Gamma_i$. Assume that $p$ is an
intersection point of two local components $C_i$ and $C_j$. Then
near $\Gamma_p^*$, $\bar F'$ is as follows, where $\gamma_0=n_i$ and
$\gamma_{r+1}=n_j$.

\setlength{\unitlength}{0.9mm} \hfill\begin{picture}(168,12)(-50,-1)
\put(-35,0.5){\makebox(0,0)[l]{}} \put(0,0){\circle*{1.5}}
\multiput(7,0)(7,0){2}{\circle{1.5}}\multiput(1,0)(7,0){3}{\line(1,0){5}}
\multiput(21,0)(1,0){6}{\circle*{0.5}}\multiput(28,0)(7,0){3}{\line(1,0){5}}
\multiput(34,0)(7,0){2}{\circle{1.5}}\put(48,0){\circle*{1.5}}
\put(-1,4){\makebox(0,0)[l]{$C_i^*$}}\put(6.5,4){\makebox(0,0)[l]{$\Gamma_1$}}
\put(13.5,4){\makebox(0,0)[l]{$\Gamma_2$}}\put(30,4){\makebox(0,0)[l]{$\Gamma_{r-1}$}}
\put(41,4){\makebox(0,0)[l]{$\Gamma_r$}}\put(49,4){\makebox(0,0)[l]{$C_j^*$}}
\put(-10,-4){\makebox(0,0)[l]{$n_i=\gamma_0$}}\put(6.2,-4){\makebox(0,0)[l]{$\gamma_1$}}\put(13,-4)
{\makebox(0,0)[l]{$\gamma_2$}}
\put(31,-4){\makebox(0,0)[l]{$\gamma_{r-1}$}}\put(40,-4){\makebox(0,0)[l]{$\gamma_{r}$}}\put(46,-4)
{\makebox(0,0)[l]{$\gamma_{r+1}=n_j$}}
\end{picture}\\

\begin{lemma}\label{gamma} {\rm 1)} \ For $i=1,\cdots,r$, we have
$\gamma_i\,|\,\gamma_{i-1}+\gamma_{i+1}$.

{\rm 2)} \ $\gamma_0\,|\,\gamma_1+\gamma_{r+1}$ and
$\gamma_{r+1}\,|\,\gamma_r+\gamma_0$.
\end{lemma}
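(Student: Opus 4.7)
The plan is to combine intersection theory on the fiber $\bar F'$ with the structure of the Hirzebruch--Jung resolution. Recall that each $\Gamma_k$ ($1\le k\le r$) is a smooth rational $(-b_k)$-curve with $b_k\ge 2$, meeting transversally at a single point only its two neighbours $\Gamma_{k-1}$ and $\Gamma_{k+1}$, where I set $\Gamma_0=C_i^*$ and $\Gamma_{r+1}=C_j^*$.

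For part $1)$, since $\bar F'$ is a fiber of the pullback fibration $f':X'\to\widetilde C$, every irreducible component of $\bar F'$ has zero intersection with $\bar F'$. Thus for $1\le k\le r$,
$$0=\bar F'\cdot\Gamma_k=\gamma_{k-1}-b_k\gamma_k+\gamma_{k+1},$$
so $\gamma_{k-1}+\gamma_{k+1}=b_k\gamma_k$, which immediately yields $\gamma_k\mid\gamma_{k-1}+\gamma_{k+1}$.

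For part $2)$, I encode the recurrence from part $1)$ as a matrix product. Setting $M_k=\bigl(\begin{smallmatrix}b_k&-1\\1&0\end{smallmatrix}\bigr)$ (so $\det M_k=1$) and $A=M_rM_{r-1}\cdots M_1=\bigl(\begin{smallmatrix}p&q\\ p'&q'\end{smallmatrix}\bigr)$, one has
$$\begin{pmatrix}\gamma_{r+1}\\\gamma_r\end{pmatrix}=A\begin{pmatrix}\gamma_1\\\gamma_0\end{pmatrix},$$
hence $\gamma_{r+1}=p\gamma_1+q\gamma_0$. The crux is the identity $p=n$. The top-left entry $p_k$ of $M_k\cdots M_1$ satisfies $p_0=1$, $p_1=b_1$, and $p_{k+1}=b_{k+1}p_k-p_{k-1}$, which is exactly the recurrence for the determinant $D_r$ of the negative intersection matrix of the chain $\Gamma_1,\ldots,\Gamma_r$; hence $p=D_r$. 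By Hirzebruch--Jung theory, $D_r$ equals the order of the local fundamental group of the cyclic quotient singularity of the normalization of $X_1$ above the node $p\in\bar F$. Since $n\equiv-1\pmod{M_F}$ forces $\gcd(n,n_i)=\gcd(n,n_j)=1$, the local equation $x^{n_i}y^{n_j}=w^n$ has normalization $\C^2/\mu_n$, so this order is exactly $n$.

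With $p=n$ in hand, $\gamma_1+\gamma_{r+1}=(n+1)\gamma_1+q\,n_i$; since $n_i\mid M_F\mid n+1$, we conclude $n_i\mid\gamma_1+\gamma_{r+1}$, which is the first half of part $2)$. The symmetric divisibility $\gamma_{r+1}\mid\gamma_r+\gamma_0$ follows by the same argument with the chain reversed (i.e., interchanging the roles of $C_i^*$ and $C_j^*$). The main obstacle is justifying the identity $p=n$: this is where the Hirzebruch--Jung theory of the local cyclic quotient and the congruence $n\equiv-1\pmod{M_F}$ genuinely enter, and everything else is either linear algebra or an intersection computation on a fiber.
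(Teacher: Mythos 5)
Your proof is correct, and part 1) coincides with the paper's (Zariski's lemma applied to each $\Gamma_k$ gives $\gamma_{k-1}-e_k\gamma_k+\gamma_{k+1}=0$). For part 2) you follow the same skeleton as the paper --- the linear recurrence, the Hirzebruch--Jung fact that the continuant $\det[e_1,\dots,e_r]$ equals $n$, and the congruence $n_i\mid n+1$ --- but your transfer-matrix packaging is genuinely more economical at the last step. The paper solves the system by Cramer's rule, writing $\gamma_1=(q\gamma_0+\gamma_{r+1})/n$ with $q=\det[e_2,\dots,e_r]$, and then must identify this subdeterminant with the type parameter of the normal form $z^n=xy^{n-q}$, i.e.\ use the congruence $n_j+qn_i\equiv 0\pmod n$, to extract divisibility by $\gamma_0$. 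In your version $\gamma_{r+1}=n\gamma_1+q\gamma_0$ with $q$ merely \emph{some} integer, so $\gamma_1+\gamma_{r+1}=(n+1)\gamma_1+q\gamma_0$ is visibly divisible by $\gamma_0=n_i$ because $n_i\mid M_F\mid n+1$; no information about $q$ modulo $n$ is needed. (Indeed the same rearrangement shows the paper's appeal to $q_0=-(n+1)n_j/n_i$ is dispensable.) What each approach buys: the paper's route records the precise value of $q$, which is reused elsewhere (e.g.\ in \S\ref{Sect3.2} where $\beta'=q/n$), while yours isolates exactly what the lemma needs --- the single standard fact $\det[e_1,\dots,e_r]=n$, which you justify via the order of the cyclic quotient group of the normalization of $z^n=x^{n_i}y^{n_j}$ (legitimate, since $n\equiv-1\pmod{M_F}$ forces $\gcd(n,n_i)=\gcd(n,n_j)=1$), and which the paper takes from the same Hirzebruch--Jung theory in \cite{BPV}. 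The reversal argument for $\gamma_{r+1}\mid\gamma_r+\gamma_0$ is fine because the continuant is invariant under reversing the chain.
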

\begin{proof} The local base change over $p$ is defined by
$z^{n}=x^{n_i}y^{n_j}$.  Note that $n$ is prime to $n_i$ and $n_j$,
the equation is equivalent to $z^n=xy^{n-q}$ for some $q$ satisfying
$n_j+qn_i\equiv 0\pmod{n}$ and $1\leq q <n$ (see \cite{BPV},
Ch.~III, \S5). By definition, $n_i$ divides $n+1$. One can see that
$q_0=-(n+1)n_j/n_i=-(n+1)\gamma_{r+1}/\gamma_0$ is an integer
satisfying $q\equiv q_0\pmod n$. The singular point over $p$ is of
Hirzebruch-Jung type.

For convenience, we take $\Gamma_0=C_i^*$, $\gamma_0=n_i$,
$\Gamma_{r+1}=C_j^*$ and $\gamma_{r+1}=n_j$. Let $e_i=-\Gamma_i^2$.
By Zariski's lemma (\cite{BPV}, Ch.~III, \S8), $\bar F'\cdot
\Gamma_i=0$ for $i=1,\cdots,r$, thus we have
\begin{equation}\label{LinearEquation}
\begin{cases}
-\gamma_0+\gamma_1e_1-\gamma_2=0, &\\
-\gamma_1+\gamma_2e_2-\gamma_3=0,&\\
\hskip1.5cm \vdots & \\
-\gamma_{r-1}+\gamma_re_r-\gamma_{r+1}=0.&
\end{cases}
\end{equation}
So we have proved 1). For fixed $\gamma_0$ and $\gamma_{r+1}$, this
is a linear system of the $r$ variables $\gamma_1$, $\cdots$,
$\gamma_{r}$. We denote by $A=[e_1,\cdots,e_r]$ the coefficient
matrix. It is well-known that the determinant of $A$ is equal to
$n$, and the determinant of the submatrix $[e_2,\cdots,e_r]$ is
equal to $q$. By Gramer Rule,
$$
\gamma_1=\frac{\gamma_0q+\gamma_{r+1}}{n}=\frac{\gamma_0q_0+
\gamma_{r+1}}{n}+\gamma_0\dfrac{q-q_0}n=-\gamma_{r+1}+\gamma_0\dfrac{q-q_0}n,
$$
so $\gamma_0\,|\,\gamma_1+\gamma_{r+1}$. Symmetrically,
$\gamma_{r+1}\,|\,\gamma_r+\gamma_0$.
 \end{proof}

\begin{lemma}\label{pa} The reduced normal crossing models of $F$ and $F^*$ have the
same arithmetic genus, i.e., $p_a(\bar F_{\rm red})=p_a({\bar
{F'}}_{\rm red})$.
\end{lemma}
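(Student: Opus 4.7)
The plan is to compute $p_a$ of both reduced normal crossing divisors directly, using that both are connected nodal curves with smooth components, and then to verify that the ``extra'' curves inserted by the Hirzebruch--Jung resolution contribute nothing. Write $\bar F = \sum_{i=1}^k n_i C_i$ for the minimal normal crossing model, and recall from the discussion preceding the lemma that
\[
\bar F' = \sum_{i=1}^k n_i C_i^* + \sum_{p} \Gamma_p^*, \qquad \Gamma_p^* = \sum_{j=1}^{r_p} \gamma_j^{(p)} \Gamma_j^{(p)},
\]
where $p$ ranges over the nodes of $\bar F_{\rm red}$, each $\Gamma_j^{(p)}$ is a smooth rational curve, and (by the observation in the text, using $\gcd(n_i,n)=1$) each strict transform $C_i^*$ is irreducible and maps birationally and isomorphically to $C_i$, since $C_i$ is already smooth. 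In particular $g(C_i^*)=g(C_i)$ and $g(\Gamma_j^{(p)})=0$.

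Next I would assemble the combinatorial data. The number of irreducible components of $\bar F'_{\rm red}$ is $k + \sum_p r_p$. For the node count, each node $p$ of $\bar F_{\rm red}$ is replaced in $\bar F'_{\rm red}$ by a chain $C_i^* - \Gamma_1^{(p)} - \cdots - \Gamma_{r_p}^{(p)} - C_j^*$ meeting transversally, which contributes exactly $r_p + 1$ nodes; summing over the $\delta := \#\{\text{nodes of }\bar F_{\rm red}\}$ gives a total of $\delta + \sum_p r_p$ nodes in $\bar F'_{\rm red}$.

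Now I would apply the standard formula for a connected nodal curve $D$ whose components $D_1,\dots,D_m$ are smooth:
\[
p_a(D) = \sum_{l=1}^m g(D_l) + \#\{\text{nodes of }D\} - m + 1.
\]
Connectedness of $\bar F_{\rm red}$ and $\bar F'_{\rm red}$ follows from Zariski's connectedness theorem applied to the fibrations $f$ and $\widetilde f$. Plugging in the data above,
\[
p_a(\bar F'_{\rm red}) = \sum_i g(C_i) + 0 + \bigl(\delta + \sum_p r_p\bigr) - \bigl(k + \sum_p r_p\bigr) + 1 = \sum_i g(C_i) + \delta - k + 1 = p_a(\bar F_{\rm red}).
\]

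The only point that demands genuine verification, rather than bookkeeping, is the claim that each $C_i^*$ is irreducible and smooth of the same genus as $C_i$; this is precisely where the condition $n \equiv -1 \pmod{M_F}$ (so $\gcd(n,n_i)=1$) enters and is already recorded in \S\ref{Sect2.3}. Everything else is accounting for Euler characteristics of inserted Hirzebruch--Jung chains, whose genus contribution is zero and whose components/node contributions cancel.
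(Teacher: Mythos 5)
Your proof is correct and follows essentially the same route as the paper's: the paper invokes the identity $p_a = \sum_i g(C_i) + \#\{\text{cycles in the dual graph}\}$ and observes that inserting a Hirzebruch--Jung chain of rational curves changes neither term, which is exactly your bookkeeping since $\#\{\text{cycles}\} = \delta - m + 1$ for a connected nodal curve. The one point you rightly flag as needing verification --- that $C_i^*$ is irreducible of the same geometric genus as $C_i$ because $\gcd(n,n_i)=1$ --- is the same point the paper relies on.
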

\begin{proof} This follows from the fact that the arithmetic genus of $\bar F$ is equal to
the sum of the geometric genus of each component plus the number of
cycles in the dual graph of $\bar F$. Note that the geometric genera
of $C_i$ and $C_i^*$ are the same. So insert a Hirzebruch-Jung chain
of rational curves does not change the arithmetic genus.
\end{proof}

\section{Local invariants of a fiber}
In order to obtain the computation formulas for the Chern numbers of
a singular fiber, we need to introduce several local invariants for
a singular point of a curve, not necessarily reduced. See
\cite{Ta96}.
\subsection{Invariants $\alpha$ and $\beta$ for a curve singularity}
In Definition 2.1, we denote by $m_{i+1}$ the multiplicity of
$(B_{i,\textrm{red}},p_{i})$ at $p_i$. (Note that
$B_{i,\textrm{red}}$ is the reduced {\it total} transform  of
$B_{\rm red}$, instead of the {\it strict} transform). One can check
that if $B$ is a compact curve, then
\begin{equation}\label{ArithGenus}
p_a(B_{r, \rm red})=p_a(B_{\rm
red})-\dfrac12\sum_{i=1}^{r}(m_i-1)(m_i-2).
\end{equation}

Suppose $B$ has only one singular point $p=p_0$. Let $k_p=k_p(B)$
(resp. $\mu_p=\mu_p(B)$) be the number of local branches (resp.
Milnor number) of $(B_{\rm red},p)$. Then
\begin{equation}
\mu_p=\sum_{i=1}^r(m_i-1)(m_i-2)+k_p-1.
\end{equation}

1) \ $m_i =2$ for all $i$ if and only if $(B_{\rm red}, p)$ is a
node.

2) \ $m_i\leq 3$ for all $i$ if and only if $(B_{\rm red}, p)$ is an
$ADE$ singular point (\cite{BPV}, Ch.II, \S8).

\bigskip If $q\in B_{r,\textrm{red}}$ is a double point, and the two
local components of $(B_r,q)$ have multiplicities $a_q$ and $b_q$,
then we define $[a_q,b_q]:=\frac{\gcd(a_q,b_q)^2}{a_qb_q},$ and
\begin{equation}
\alpha_p=\sum_{i=1}^r(m_i-2)^2, \hskip1cm \beta_p=\sum_{q\in
B_r}[a_q,b_q],
\end{equation}
where $q$ runs over all of the double points of
$B_{r,\textrm{red}}$. These two invariants are independent of the
 resolution.

 In \cite{Ta96}, we prove that $\mu_p\geq
\alpha_p+\beta_p$. Actually, we need more precise inequality of this
kind.

\begin{example} The invariants of an $ADE$ singularity $(B_{\rm red},p)$ are as
follows.
\end{example}
\vskip0.2cm \centerline{
\begin{tabular}{|c|c|c|c|c|c|c|c|c|c|c|c|c|c|}\hline
   & $A_{2k-1}$ &
  $A_{2k}$
  & $D_{2k+2}$ & $\phantom{\dfrac11}D_{2k+3}\phantom{\dfrac11}$ & $E_6$ & $E_7$
   & $E_8$    \\
  \hline
  $\mu_p$ & $2k-1$ & $2k$ & $2k+2$ & $\phantom{\dfrac11}2k+3\phantom{\dfrac11}$ & $6$ & $7$
  & $8$
   \\ \hline
  $\alpha_p$ & $k-1$ & $k$ & $k$ & $\phantom{\dfrac11}k+1\phantom{\dfrac11}$ & $3$ & $3$
  & $4$
   \\ \hline
    $\beta_p$ & I$_k$ &
    $\frac{3k}{2k+1}$ &
      II$_k$
    & III$_k$ & $\phantom{\dfrac11}1\phantom{\dfrac11}$ & IV
  & $\frac45$
   \\ \hline
    $\beta^-_p$ & $\geq 1-\frac1k$ & $\geq\frac{6k-1}{4k+2}$ & $ \ $ & $\phantom{\dfrac11}\geq \frac12\phantom{\dfrac11}$ &
    $\geq\frac{11}{12}$ & $\geq\frac{1}{3}$
  & $\geq\frac{11}{15}$
   \\ \hline
 \end{tabular}\\[0.5cm]}

$$\begin{cases} {\rm I}_k=1-\frac{1}{k}+[k(n+m), \ n]+[k(n+m), \ m].
&\\
{\rm II}_k=\frac{k(n,m+l)^2}{n(n+k(m+l))}+[n+k(m+l), \ m]+[n+k(m+l),
\ l].
&\\
{\rm III}_k=\frac{1}{2}+ [m, \ 2((2k+1)m+n)]+
\frac{(2k+1)(n,2m)^2}{2n((2k+1)m+n)}.&\\
\text{\rm IV}=\frac{1}{3}+\frac{2(3m,n)^2}{3n(2m+n)}+\frac{(m,3n)^2}
{3m(2m+n)}.&
\end{cases}
$$
{Where $n$ (resp. $m$ or $l$) is the multiplicity of a local branch
of $(F,p)$.  $n$ corresponds to a smooth branch. We have
$$
{\rm I}_k\leq 1, \hskip0.3cm {\rm II}_k\leq1, \hskip0.3cm {\rm
III}_k\leq\frac{3(k+1)}{2k+3}, \hskip0.3cm {\rm IV}\leq\frac45.
$$
 }

\begin{lemma}\label{mualphabeta} {\rm 1)} \ $\mu_p\geq \alpha_p+\beta_p$, with
equality iff the singularity is of types $A_1$ or $A_2$.

{\rm 2)} \ $\mu_p\geq \alpha_p+\beta_p+1$ except for the
singularities of types $A_k$ for $k\leq 4$.

{\rm 3)} \ $\mu_p\geq \alpha_p+\beta_p+2$ except for the
singularities of types $A_k$ $(k\leq 6)$ and $D_5$.

{\rm 4)} \ If $2(\mu_p-\alpha_p-\beta_p)+\alpha_p+3\beta_p^-<6$,
then $p$ is of types $A_1$, $A_2$, $A_3$ and $D_4$.

{\ } \hskip0.33cm  If
   $2(\mu_p-\alpha_p-\beta_p)+\alpha_p+3\beta_p^-<5$, then $p$ is
   of types $A_1$, $A_2$ and $A_3$.

{\ } \hskip0.33cm  If
   $2(\mu_p-\alpha_p-\beta_p)+\alpha_p+3\beta_p^-<\frac72$, then $p$ is a
node.
\end{lemma}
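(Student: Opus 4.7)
The plan is to bootstrap from the basic inequality $(1)$ to its refinements $(2)$--$(4)$ by careful bookkeeping of slack. The starting point is the identity
$$\mu_p-\alpha_p \;=\; \sum_{i=1}^{r}(m_i-2)+k_p-1,$$
obtained by expanding $(m_i-1)(m_i-2)=(m_i-2)^2+(m_i-2)$ in the Milnor number formula, together with the elementary local bound $[a_q,b_q]\le 1$ at each double point of $B_{r,\rm red}$. These already yield $\mu_p \geq \alpha_p + \beta_p$, and all the refinements amount to tracing where strict inequality appears.

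I would split into two regimes. In the \emph{ADE regime} $(m_i\le 3$ for all $i)$ there are only finitely many topological types, so I would use the explicit table of $\mu_p,\alpha_p,\beta_p,\beta_p^-$ values displayed just before the lemma, together with the stated upper bounds $I_k\le 1$, $II_k\le 1$, $III_k\le 3(k+1)/(2k+3)$, $IV\le 4/5$ and the lower bounds on $\beta_p^-$, to verify each assertion by direct substitution. The equality case in $(1)$ forces $\sum(m_i-2)+k_p-1=\beta_p$, which within ADE leaves only $A_1$ and $A_2$; the borderline cases $A_3,A_4,D_4,D_5,E_6$ then determine the exception lists in $(2)$--$(4)$ by elementary arithmetic. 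For instance, $D_5$ appears in $(3)$ precisely because $\mu_p-\alpha_p-\beta_p\geq 5-2-6/5=9/5<2$; and for $(4)$ one checks e.g.\ $A_4$ via $2(4/5)+2+3(9/10)=63/10>6$.

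In the \emph{non-ADE regime} $(m_i\ge 4$ for some $i)$ I would argue inductively on the resolution length $r$: after stripping off the first blow-up at $p$, the singularities of the strict transform on the exceptional divisor have shorter resolutions, so the inductive hypothesis applies. The surplus $(m_1-2)\ge 2$ appearing in the identity above, combined with the observation that a multiplicity-$\geq 4$ exceptional divisor in a \emph{minimal} partial resolution meets the other components in only a controlled number of double points (each contributing at most $1$ to $\beta_p$), yields parts $(2)$ and $(3)$ with room to spare. Part $(4)$ then follows from $\alpha_p\geq (m_1-2)^2\ge 4$ and $\beta_p^-\geq 0$, which together give $2(\mu_p-\alpha_p-\beta_p)+\alpha_p+3\beta_p^-\geq 2\cdot 1+4+0=6$, with strict inequality coming from the induction surplus.

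The main obstacle will be the borderline numerical verifications in the ADE regime, particularly the tight cases $D_5$ in $(3)$ and $A_4$ in $(4)$, where one must use the \emph{precise} values of $II_k$ and $III_k$ (depending on the branch multiplicities $n,m,l$ of the possibly non-reduced fiber $F$) together with the refined lower bounds on $\beta_p^-$ listed in the table, rather than the crude upper bounds $\le 1$. Once these sharp estimates are in hand, the finite list of checks completes the proof.
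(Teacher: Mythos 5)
Your overall architecture (finite table check in the ADE regime, plus a reduction to the first blow-up in the non-ADE regime) is the same as the paper's, but two of your key estimates do not actually close the argument. First, the claim that the identity $\mu_p-\alpha_p=\sum_i(m_i-2)+k_p-1$ together with $[a_q,b_q]\le 1$ ``already yields'' $\mu_p\ge\alpha_p+\beta_p$ is false as stated: bounding each double point's contribution by $1$ gives $\beta_p\le\#\{q\}$, and the number of double points of $B_{r,\rm red}$ is in general strictly larger than $\sum_i(m_i-2)+k_p-1$. For an ordinary cusp one has $r=3$, $(m_1,m_2,m_3)=(2,2,3)$, $k_p=1$, so the right-hand side is $1$, while $B_{3,\rm red}$ has three double points; the inequality $\beta_p\le 1$ holds only because $[1,6]+[2,6]+[3,6]=\tfrac16+\tfrac13+\tfrac12=1$, i.e.\ because the contributions along a single exceptional component sum to at most $1$, not because each is at most $1$. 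The paper does not attempt this derivation: part 1) is quoted from [Ta96] and only the equality case and the refinements are verified here.

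The same overestimate undermines your non-ADE induction exactly in the borderline case $m_1=4$, which is where all the work in the paper lies. With the recursion $\mu_p-\alpha_p-\beta_p=\sum_{i\le r'}(m_i-3)+\sum_{p'}(\mu_{p'}-\alpha_{p'}-\beta_{p'})$ the first blow-up contributes only $m_1-3=1$, so part 3) requires squeezing at least $1$ more out of the singularities of $B'$; if you bound each node of $B'$ on the exceptional curve by ``$\beta\le 1$ each'' you get nothing (e.g.\ four concurrent lines: $\mu_p=9$, $\alpha_p=4$, and your bound gives only $\beta_p\le 4$, hence $\mu_p-\alpha_p-\beta_p\ge 1$, short of the needed $2$). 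Moreover the inductive hypothesis permits $A_3$ points on $B'$ contributing only $\tfrac12$, so induction plus the surplus alone yields only $\ge\tfrac32$ in the worst configuration. The paper closes precisely these cases by (i) the sharper estimate that the total contribution of all double points on one exceptional component $E$ is at most $\sum_q a_q/n_E\le 1$, and (ii) an explicit classification of the possible $B'$ (at least two $A_3$'s; exactly one $A_3$, forcing the normal form $(x-y)^a(x+y)^b(x^2-y^3)^c$ with $\mu_p=10$, $\alpha_p=5$, $\beta_p\le 2$; no $A_3$, forcing $x^ay^b(x-y)^c(x+y)^d$ with $\mu_p=9$, $\alpha_p=4$, $\beta_p\le 1$). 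You locate the ``main obstacle'' in the ADE table checks, but those are routine; the genuinely delicate step is this $m_1=4$ analysis, which your sketch does not supply.
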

\begin{proof} For an $ADE$ singular point $p$, the inequalities can
be checked directly from the computation above.

If $p$ is not an $ADE$ singular point, then at least one $m_i\geq
4$, so $\alpha_p\geq 4$. We claim that  $\mu_p\geq
\alpha_p+\beta_p+2$. In Definition 2.1, we assume that
$\sigma=\sigma_1\circ\sigma_2$, where $\sigma_1:X'\to X$ consists of
blowing-ups at the non-$ADE$ singular points $p_0$, $\cdots$,
$p_{r'-1}$ such that $B'=\sigma_1^*B$ admits at worst $ADE$ singular
points. Then we have
\begin{equation}
\mu_p-\alpha_p-\beta_p=\sum_{i=1}^{r'}(m_i-3)+\sum_{p'\in
B'}(\mu_{p'}-\alpha_{p'}-\beta_{p'})
\end{equation}
Because $p$ is not an $ADE$ singular point, at least one of $m_i$
($i\leq r'$) is bigger than $3$. If two of these $m_i$'s are bigger
than 3, then $\mu_p\geq \alpha_p+\beta_p+2$.  Without loss of
generality, we assume that $m_1=4$ and $r'=1$. Namely $m_1=4$ and
$m_i\leq 3$ for all $i\geq 2$. We can assume also that
$\mu_{p'}<\alpha_{p'}+\beta_{p'}+1$ for any singular point $p'$ of
$B'_{\rm red}$.

Now we consider the $ADE$ singular points of $B'$. Because the
exceptional curve is one of the branches of the singular points $p'$
of $B'_{\rm red}$, each singular point $p'$ has at least two
branches. According to 1), the singular points $p'$ of $B'_{\rm
red}$ is of types $A_1$ or $A_3$. Note that if $p'$ is of type
$A_3$, then $\mu_{p'}-\alpha_{p'}-\beta_{p'}=\frac12$. Thus if $B'$
admits at least two $A_3$, then $\mu_p\geq \alpha_p+\beta_p+2$ holds
true.

If $B'$ admits only one $A_3$, then we can assume that $(B,p)$ is
defined by $(x-y)^a(x+y)^b(x^2-y^3)^c=0$. Now it is easy to check
that $\mu_p=10$, $\alpha_p=5$ and $\beta_p\leq 2$. So $\mu_p\geq
\alpha_p+\beta_p+2$.

If $B'$ admits no $A_3$, then $B'$ admits 4 $A_1$. Hence we can
assume that $(B,p)$ is defined by $x^ay^b(x-y)^c(x+y)^d=0$. We have
$\mu_p=9$, $\alpha_p=4$ and $\beta_p\leq 1$. Thus  $\mu_p\geq
\alpha_p+\beta_p+2$.
\end{proof}

\begin{lemma}\label{ADE5} A curve singularity $p$ satisfying $\sum_{i=1}^rm_i(m_i-2)\leq
5$ must be of types $A_1$, $A_2$, $A_3$ and $D_4$.
\end{lemma}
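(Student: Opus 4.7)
The plan is to reduce the inequality $\sum m_i(m_i-2)\le 5$ to the simpler numerical condition $\alpha_p\le 1$, and then read off the answer from the $ADE$ table of Example~3.1. The key first observation is that the map $m\mapsto m(m-2)$ takes the values $0,3,8,15,\dots$ at $m=2,3,4,5,\dots$. In particular a single $m_i\ge 4$ already contributes at least $8$ to the sum, so the hypothesis forces $m_i\le 3$ for every $i$. By item (2) of the list recalled just after the formula (3.2) (which characterises $ADE$ singularities as precisely those resolved by a sequence of blow-ups with multiplicities $\le 3$; cf.\ \cite{BPV}, Ch.~II, \S8), this means $(B_{\rm red},p)$ is of type $A_n$, $D_n$ or $E_n$.

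From here every $m_i\in\{2,3\}$, so one has the trivial identity $m_i(m_i-2)=3(m_i-2)^2$, and summing over $i$ yields
\begin{equation*}
\sum_{i=1}^{r}m_i(m_i-2)\;=\;3\sum_{i=1}^{r}(m_i-2)^2\;=\;3\alpha_p.
\end{equation*}
The assumption $\sum m_i(m_i-2)\le 5$ therefore forces $\alpha_p\le 5/3$, and since $\alpha_p$ is a non-negative integer, $\alpha_p\le 1$. Consulting the table of Example~3.1, where $\alpha_{A_{2k-1}}=k-1$, $\alpha_{A_{2k}}=\alpha_{D_{2k+2}}=k$, $\alpha_{D_{2k+3}}=k+1$, $\alpha_{E_6}=\alpha_{E_7}=3$, $\alpha_{E_8}=4$, the types with $\alpha_p\le 1$ are exactly $A_1$ (with $\alpha_p=0$) and $A_2,A_3,D_4$ (each with $\alpha_p=1$); every other $ADE$ type has $\alpha_p\ge 2$. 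This yields the stated list.

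There is essentially no technical obstacle: once one notices the identity $\sum m_i(m_i-2)=3\alpha_p$ in the restricted range $m_i\in\{2,3\}$, the rest is a direct table lookup. The only step that is not purely formal is invoking the equivalence ``$m_i\le 3$ for all $i$ $\Longleftrightarrow$ $p$ is $ADE$'', but this is exactly item (2) of the list recalled from \cite{Ta96} (and \cite{BPV}) just above Example~3.1, so it is available for free.
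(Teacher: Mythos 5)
Your proof is correct and follows essentially the same route as the paper's (very terse) argument: bound each $m_i$ by $3$ to conclude the singularity is $ADE$, then identify the admissible types from the resolution data. Your reformulation via the identity $\sum_i m_i(m_i-2)=3\alpha_p$ when all $m_i\in\{2,3\}$ is just a clean restatement of the paper's observation that at most one $m_i$ can equal $3$, followed by the same table lookup.
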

\begin{proof}
The condition implies that $m_i\leq 3$ for any $i$ and there exists
at most one $i$ such that $m_i=3$, so $p$ is an $ADE$ singular
point. Now one can check the result directly.
\end{proof}

We define $\beta_F$ as the sum of $\beta_p$. One can check easily
that $\beta_F$ is independent of the resolution,  thus  $F$,
$\sigma^*F$ and $\bar F$ have the same $\beta$-invariants.

\subsection{Invariants $\beta^-$ and $\beta^+$}\label{Sect3.2}

\begin{definition} Let $\bar F$ be the minimal normal crossing model
of $F$, and let $G(\bar F)$ be the dual graph of $\bar F$. A H-J
branch of rational curves in $G(\bar F)$ is
\begin{equation*}
\overset{\gamma_1}{\underset{-e_1}\circ}\!\!\!\!\!-\!\!\!-\!\!\!-\!\!\!-\!\!\!-\!\!\!\!\!
\overset{\gamma_2}{\underset{-e_2}\circ}\!\!\!\!\!-\!\!\!-\!\!\!-\!\!\!-\!\!\!-
\ \cdots \ -\!\!\!-\!\!\!-\!\!\!-\!\!\!-\!\!\!\!\!
\overset{\gamma_r}{\underset{-e_r}\circ}
\!\!\!\!\!-\!\!\!-\!\!\!-\!\!\!-\!\!\!-\!\!\!\!\!\!
\overset{\gamma_{r+1}}\bullet
\end{equation*}
where $\overset{\gamma_i}{\underset{-e_i}\circ}$ denotes a smooth
rational curve $\Gamma_i$ with $\Gamma_i^2=-e_i$ whose multiplicity
in $\bar F$ is $\gamma_i$. $\bullet$ denotes either a curve
$\Gamma\not\cong\mathbb P^1$, or a smooth rational curve meeting at
$3$ or more points with the other components. We call $\Gamma_1$ an
{\it end point} of $G(\bar F)$.
\end{definition}

Note that the $r$ rational curves can be contracted to a
Hirzebruch-Jung singularity of type $(n,q)$ with defining equation
$z^n=xy^{n-q}$ (\cite{BPV}, Ch.~III, \S5), where $n$ and $q$ are
respectively the determinants of the matrices $[e_1,\cdots,e_r]$ and
$[e_2,\cdots,e_r]$.  $n$ and $q$ can also be determined by the
multiplicities $\gamma_i$ as follows.

According to (\ref{LinearEquation}) and $\gamma_0=0$, we see that
$\gamma_1$ divides $\gamma_i$ for any $i$. Using the notations of
(\cite{BPV}, Ch.~III, \S5), $\gamma_1<\gamma_2<\cdots<\gamma_r$,
 $\gamma_i=\mu_i \gamma_1$ for any $i$, so
$1=\mu_1<\mu_2<\cdots<\mu_{r+1}$.
$$n=\mu_{r+1}=\dfrac{\gamma_{r+1}}{\gamma_1},\hskip0.3cm q'=\mu_r=\dfrac{\gamma_r}{\gamma_1}$$
and $q$ is the unique solution of the equation
$$qq'\equiv 1\pmod{n}, \hskip0.3cm 1\leq q<n.$$
 Since $\mu_i$ and $\mu_{i+1}$ are coprime, the contribution of the branch
 to $\beta_F=\beta_{\bar F}$ is
\begin{equation}
\beta'=\dfrac1{\mu_1\mu_2}+\dfrac1{\mu_2\mu_3}+\cdots+\dfrac1{\mu_r\mu_{r+1}}.
\end{equation}
There is a relation (\cite{BPV}, Ch. III, \S5, eq(6))
 \begin{equation}
   \lambda_k\mu_{k+1}-\lambda_{k+1}\mu_k=n,
 \end{equation}
i.e.,
 \begin{equation}\label{1.6}
   \dfrac{\lambda_k}{\mu_{k}}-\dfrac{\lambda_{k+1}}{\mu_{k+1}}
   =n\dfrac1{\mu_k\mu_{k+1}}.
 \end{equation}
Note that $\lambda_1=q$ and $\lambda_{r+1}=0$. Take the sum of
(\ref{1.6}) from $k=1$ to $r$, we have
\begin{equation}\label{LuJun}
\beta'=\dfrac1n\left(\dfrac{\lambda_1}{\mu_1}-\dfrac{\lambda_{r+1}}
{\mu_{r+1}}\right)=\dfrac{q}n.
\end{equation}
\begin{lemma}  The contribution of the H-J branch to $\beta_F$
is $\frac{q}{n}$.
 \end{lemma}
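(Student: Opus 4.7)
The plan is essentially to assemble directly the computation already carried out in the discussion preceding the statement; the lemma merely packages what has been derived. The double points of $G(\bar F)$ lying on the H-J branch are the intersections of $\Gamma_k$ with $\Gamma_{k+1}$ for $k=1,\ldots,r$, where $\Gamma_{r+1}$ denotes the attached vertex $\bullet$. By definition of $\beta_F$, the contribution of the branch is
\[
\beta' \;=\; \sum_{k=1}^{r} [\gamma_k,\gamma_{k+1}] \;=\; \sum_{k=1}^{r} \frac{\gcd(\gamma_k,\gamma_{k+1})^2}{\gamma_k\gamma_{k+1}}.
\]

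First I would verify that consecutive $\mu_k$ are coprime, so that $\gcd(\gamma_k,\gamma_{k+1}) = \gamma_1$ and hence $[\gamma_k,\gamma_{k+1}] = 1/(\mu_k\mu_{k+1})$. This is where the endpoint hypothesis enters: since $\Gamma_1$ is an end point of the graph, we have $\gamma_0 = 0$, and the linear relations (\ref{LinearEquation}) then force $\gamma_1 \mid \gamma_i$ for every $i$, so $\mu_i = \gamma_i/\gamma_1$ is a positive integer. A common prime divisor of $\mu_k$ and $\mu_{k+1}$ would propagate via the same recursion to divide $\mu_1 = 1$, a contradiction. This yields the formula for $\beta'$ recorded just above the lemma.

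Next I would invoke the Hirzebruch--Jung identity $\lambda_k\mu_{k+1}-\lambda_{k+1}\mu_k = n$ from (\cite{BPV}, Ch.~III, \S5) and divide by $\mu_k\mu_{k+1}$ to turn each summand into a telescoping difference, as in (\ref{1.6}). Summing from $k=1$ to $r$ collapses the sum, and substituting the boundary values $\lambda_1 = q$, $\mu_1 = 1$, $\lambda_{r+1} = 0$, $\mu_{r+1} = n$ yields $\beta' = q/n$, which is (\ref{LuJun}).

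The only real obstacle is bookkeeping rather than mathematics: one must ensure the $r$ double points being summed are precisely those on the H-J chain, including the intersection of $\Gamma_r$ with $\bullet$ but excluding any other double points that $\bullet$ (when it is a $(-e)$-curve meeting three or more components of $\bar F$) may contribute elsewhere in $G(\bar F)$. Once this indexing is fixed, the coprimality claim for consecutive $\mu_k$ and the telescoping identity give $\beta' = q/n$ immediately.
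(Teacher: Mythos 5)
Your argument is correct and is essentially identical to the paper's own derivation, which appears as the computation immediately preceding the lemma: the identification $[\gamma_k,\gamma_{k+1}]=1/(\mu_k\mu_{k+1})$ via $\gamma_i=\mu_i\gamma_1$ with consecutive $\mu_i$ coprime, followed by the telescoping sum coming from the Hirzebruch--Jung relation $\lambda_k\mu_{k+1}-\lambda_{k+1}\mu_k=n$ and the boundary values $\lambda_1=q$, $\lambda_{r+1}=0$. Your explicit justification of the coprimality of consecutive $\mu_k$ via the recursion \eqref{LinearEquation} with $\gamma_0=0$ is a small but welcome addition that the paper leaves implicit.
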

\begin{definition} $\beta_F^-=\sum\beta'$ is the total contribution of
all H-J branches
 in $G(\bar F)$.
\end{definition}

Note that $\gamma_2=e_1\gamma_1$, the contribution of a H-J branch
to $\beta^-_F$ is at least $[\gamma_1,\gamma_2]=\frac1{e_1}$.

\begin{example} If $e_1=\cdots=e_{r-1}=2$ and $e_r\geq 2$, then $n=r(e_r-1)+1$, $q=n-(e_r-1)=(r-1)(e_r-1)+1$, and the
contribution of this H-J branch to $\beta^-_F$ is
\begin{equation}\label{beta-}\beta'=\dfrac{(r-1)(e_r-1)+1}{r(e_r-1)+1}=1-\dfrac{e_r-1}{r(e_r-1)+1}.\end{equation}
\end{example}

\begin{theorem} {\bf (Gang Xiao \cite{Xi90})} Assume that  $n\equiv 0\pmod{M_F}$.
Let $\bar F$ be the minimal normal crossing model
of $F$. Consider the construction of the $n$-th root model of $\bar
F$ as in \S2.2. Then a curve in $X'$ is contracted by $\tau$ if and
only if it comes from a H-J branch in $\bar F$.
\end{theorem}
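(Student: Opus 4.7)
The plan is to combine the semistable reduction theorem recalled in \S2.2 (for $n\equiv 0\pmod{M_F}$, the fiber $F'$ of $f'$ over $\widetilde p$ is reduced with only nodes) with Zariski's lemma. The starting observation is: if $F'=\sum C_i$ is reduced and nodal on the smooth surface $X'$, then for each irreducible component $C$ the relation $F'\cdot C=0$ gives $C^2=-\deg_{G(F')}(C)$, where $\deg_{G(F')}(C)$ is the valence of $C$ in the dual graph of $F'$. In particular the $(-1)$-curves of $F'$ --- the curves $\tau$ contracts at the first step --- are exactly the smooth rational terminal components of $G(F')$.

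Next I would describe $G(F')$ using the analysis recalled in \S2.2: one replaces each double point of $\bar F_{\rm red}$ by a (possibly empty) Hirzebruch--Jung chain of rational curves, and each component $C_i$ of $\bar F$ by its strict transform(s) in $X'$. Since inserted H-J chains always lie interior to $G(F')$ and never produce new end-points, the terminal rational components of $G(F')$ correspond precisely to the end-points of $G(\bar F)$, i.e.\ to the starting points of H-J branches. Given an H-J branch $\Gamma_1-\Gamma_2-\cdots-\Gamma_r-\bullet$ in $\bar F$, the strict transforms $\Gamma_i^*$ together with the inserted H-J chains $H_i$ assemble into an extended chain in $G(F')$ whose free end is $\Gamma_1^*$.

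The forward direction then proceeds by iterative contraction: $\Gamma_1^*$ is a $(-1)$-curve by the starting observation, so I contract it; the next curve in the extended chain (either the inner end of $H_1$ or $\Gamma_2^*$ if $H_1$ is empty) becomes terminal in the push-forward fiber, which remains reduced and nodal, hence is again a $(-1)$-curve by the same observation. Iterating, the entire extended chain is absorbed. The process stops at the strict transform of $\bullet$, for the following reason built into the definition of an H-J branch: either $\bullet$ is non-rational, so $\bullet^*$ has positive geometric genus, or $\bullet$ is smooth rational with $\geq 3$ neighbors in $G(\bar F)$, so $\bullet^*$ retains $\geq 2$ neighbors once the extended chain is removed and is not terminal. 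For the converse, an induction on the contraction order shows that every curve contracted by $\tau$ must lie in the extended chain of some H-J branch: at each stage the new $(-1)$-curve is a terminal smooth rational in the current reduced nodal fiber, and the structural description of $G(F')$ above forces this to happen only inside such an extended chain.

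The main obstacle is to verify rigorously that contracting H-J-branch extended chains never accidentally promotes a non-H-J component of $\bar F$ to a $(-1)$-curve. The delicate case is a smooth rational $\bullet$ at which several H-J branches meet: one must check that the number of remaining neighbors of $\bullet$ not coming from H-J branches is still at least $2$. Here the minimality of $\bar F$ (absence of redundant $(-1)$-curves) combined with the constraints Zariski's lemma imposes on the multiplicities $n_i$ plays the decisive role. A secondary care concerns the fact that the strict transform of a component $C_i$ may split into several copies under base change because $\gcd(n,n_i)=n_i$; fortunately only the local chain structure around an H-J branch is needed in the iterative argument, and this local structure is preserved by the splitting.
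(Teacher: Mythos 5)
First, note that the paper does not actually prove this statement: it is quoted from Xiao, with the remark that it ``is contained in the proof of Prop.~1 of \cite{Xi90}.'' So your proposal has to be judged on its own. Your framework is the right one and is essentially the standard route: for $n\equiv 0\pmod{M_F}$ the fiber $F'$ is reduced and nodal, Zariski's lemma gives $C^2=-\sum_{j\neq i}C_jC$ for every component, so the curves contracted by $\tau$ are exactly those removed by iteratively pruning genus-zero leaves from the dual graph of $F'$, and the forward direction (everything over an H-J branch gets pruned) goes through as you describe. But the proof is incomplete exactly where you yourself say ``the main obstacle'' lies: you never show that the pruning stops at the components lying over the junction curve $\bullet$, i.e.\ that no component outside the H-J branches is ever promoted to a $(-1)$-curve. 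That verification \emph{is} the content of the theorem; appealing to ``minimality of $\bar F$ combined with the constraints Zariski's lemma imposes'' is not an argument. Worse, your stated reason for termination --- that a rational $\bullet$ with $\geq 3$ neighbours ``retains $\geq 2$ neighbours once the extended chain is removed'' --- is false as written whenever two or more H-J branches meet at the same $\bullet$ (e.g.\ a valence-$3$ junction with two H-J arms would retain only one neighbour, and a star all of whose arms are H-J retains none; in the latter case $\bullet^*$ survives only because it has become the entire fiber, with self-intersection $0$, not because of its valence).

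To close the gap one must actually analyse the preimage of a junction curve $C_0$ of multiplicity $n_0$: the number of components of $X'$ over $C_0$, their genus (Riemann--Hurwitz for the cyclic cover of $C_0$ branched at its nodes), and their valence in $G(F')$, which is controlled by $\gcd(n_0,m)$ where $m$ runs over the multiplicities of the neighbours of $C_0$. The decisive numerical input is the one your own \S 3.2 discussion of H-J branches supplies: along an H-J arm the multiplicities are $\gamma_1\mu_1<\gamma_1\mu_2<\cdots$ with consecutive $\mu_i$ coprime, so the number of connecting chains over the node where the arm meets $C_0$ is $\gcd(n_0,\gamma_r)=\gamma_1$, a \emph{proper} divisor of $n_0$, hence $\leq n_0/2$; summing these contributions against the Riemann--Hurwitz count for the components over $C_0$ is what rules out their becoming $(-1)$-curves after the arms are pruned. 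None of this computation appears in your write-up, so as it stands the ``only if'' direction is asserted rather than proved.
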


The theorem above is contained in the proof of Prop. 1 of
\cite{Xi90}.

From the previous theorem,  $\beta_F^-$ is just $c_{-1}(F)$ defined
in \cite{Ta96} by the remark of (\cite{Ta96}, p.666), i.e.,
$n\beta_F^-$ is the number of $(-1)$-curves contracted by $\tau$.
Let $\beta^+_F=\beta_F-\beta^-_F$.
$$
 \beta_F=\beta_F^++\beta_F^-.
$$

\subsection{Formulas for the Chern numbers of a fiber}\label{S3.3}

Let $\mu_{F}=\sum_{p}\mu_p(F_{\rm red})$ be the sum of the Milnor
numbers of the singularities of $F_{\textrm{red}}$.

Let $N_F=g-p_a(F_{\textrm{red}})$. One can prove that $0\leq N_F\leq
g$. $N_F=0$ iff $F$ is reduced, or $g=1$ and $F$ is of type
$_m\textrm{I}_b$. $N_F=g$ iff $F$ is a tree of smooth rational
curves.

The topological characteristic of $F$ is equal to $2N_F+\mu_F+2-2g$.

Then we have the following formulas for the computation of the Chern
numbers of $F$.
\begin{equation}
\begin{cases}
c_1^2(F)=4N_F+F^2_{\textrm{red}}+\alpha_F-\beta_F^-,  & \\
c_2(F)=2N_F+\mu_F-\beta_F^+, & \\
12\chi_F=6N_F +F^2_{\textrm{red}}+\alpha_F+\mu_F-\beta_F. &
\end{cases}
\end{equation}

From the blow-up formulas, we only need to compute the Chern numbers
of the minimal normal crossing model $\bar F$.

\section{Proof of Theorem \ref{THMnew1.1}}

\subsection{Dedekind's reciprocity law} We denote by $(p,q)$ the
greatest common divisor of two integers $p$ and $q$.  The following
notation is from Dedekind's Reciprocity Law. Take
$$\chi(p,q)=\frac{1}{12}\left(\frac{q}{p}+\frac{p}{q}+\frac{(p,q)^2}{pq}\right)-\frac{1}{4}.$$
One can check easily the following identities
\begin{align}
\chi(p,p)=0, \hskip0.5cm  \chi(p,q)=\chi(p,p+q)+\chi(p+q,q).
\end{align}

If $p$ and $q$ are coprime, then Dedekind's sum  is defined as
follows
\begin{align*}
s(p,q)=\sum\limits_{i=0}^{q-1}\left(\!\!\left(\frac{pi}{q}\right)\!\!\right)\left(\!\!\left(\frac{i}{q}\right)\!\!\right),
\end{align*}
where
\begin{align*}
((x))= \left \{ \begin{array}{ll}
x-[x]-\frac{1}{2}, & x\notin \mathbb{Z},\\
0, & x\in \mathbb{Z},
\end{array}\right.
\end{align*}
 and $[x]$ is the largest integer   $\leq x$. $((x))$ is an odd
 fuction since $((-x))=-((x))$ and is periodic with period $1$.

 If $p$ and $q$ are not coprime, then we define $s(p,q):=s\left({p}/{(p,q)},
 {q}/{(p,q)}\right)$.
 Therefore, $s(-p,q)+s(p,q)=0$, and $s(p+kq,q)=s(p,q)$ for all
integers  $k$. In particular, if $p+p'$ is divisible by $q$, then
\begin{equation}
s(p,q)+s(p',q)=0.
\end{equation}

 The well-known {\itshape Dedekind's Reciprocity Law} says
\begin{align}
s(p,q)+s(q,p)=\chi(p,q)
\end{align}

\subsection{Compute $\chi_F$ from the normal crossing model $\bar F$}
Let $F$ be a singular fiber and ${\bar{F}}=\sum_{i=1}^{k}n_iC_i$ be
the normal crossing model of $F$, where $C_i$'s are all irreducible
components.   Take $M_F=\textrm{lcm}(n_1,\cdots,n_k)$.

\begin{theorem}\label{theorem_chi_F}
Let $N_{{\bar{F}}}=g-p_a({\bar{F}}_{\rm{red}})$. Then
  $$\chi_{F}=\frac{1}{2}N_{{\bar{F}}}-\sum_{i<j}\chi(n_i,n_j)C_iC_j.$$
\end{theorem}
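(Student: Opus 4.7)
The plan is to combine the general Chern-number formula for $\chi$ given in \S\ref{S3.3} with the blow-up invariance of $\chi$, and then translate each of the ingredients into the combinatorial data of the normal crossing model $\bar F = \sum n_i C_i$.

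First, I would reduce to the normal crossing model. The blow-up formula $\chi_{F'}=\chi_F$ in property~(3) of the introduction implies that $\chi$ is unchanged both under contraction of $(-1)$-curves (passing to the minimal model) and under the further blow-ups that produce the normal crossing model. Therefore $\chi_F=\chi_{\bar F}$, and it suffices to prove the identity with $\chi_{\bar F}$ on the left. Applying the formula
$$12\chi_{\bar F}=6N_{\bar F}+\bar F^{2}_{\rm red}+\alpha_{\bar F}+\mu_{\bar F}-\beta_{\bar F}$$
from \S\ref{S3.3}, I would then compute each of the four terms in terms of the $n_i$ and the intersection numbers $C_iC_j$.

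Since $\bar F$ is normal crossing, every singular point of $\bar F_{\rm red}$ is an ordinary double point, so only the blow-up step $m_1=2$ occurs; consequently $\alpha_{\bar F}=0$, $\mu_{\bar F}=\sum_{i<j}C_iC_j$ (the Milnor number of a node equals $1$), and $\beta_{\bar F}=\sum_{i<j}[n_i,n_j]\,C_iC_j=\sum_{i<j}\frac{(n_i,n_j)^2}{n_in_j}C_iC_j$. For $\bar F^2_{\rm red}$ I would expand $(\sum_i C_i)^2$ and use Zariski's identity $\bar F\cdot C_i=0$ to eliminate the self-intersections: $C_i^2=-\frac{1}{n_i}\sum_{j\ne i}n_jC_iC_j$. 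This gives
$$\bar F^{2}_{\rm red}= -\sum_{i<j}\Bigl(\frac{n_j}{n_i}+\frac{n_i}{n_j}\Bigr)C_iC_j+2\sum_{i<j}C_iC_j.$$

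Plugging these four expressions into the formula for $12\chi_{\bar F}$ collects the coefficient of each $C_iC_j$ into
$$3-\frac{n_j}{n_i}-\frac{n_i}{n_j}-\frac{(n_i,n_j)^2}{n_in_j}=-12\,\chi(n_i,n_j),$$
directly by the definition of $\chi(p,q)$ given at the start of \S4.1. Dividing by $12$ yields the desired identity. There is no real obstacle here: the whole argument is a bookkeeping exercise, and the only point requiring attention is matching the numerical coefficient $-12$ in the reorganisation so that the Dedekind-reciprocity expression $\chi(n_i,n_j)$ appears cleanly. All other inputs ($\chi_F=\chi_{\bar F}$, the formula from \S\ref{S3.3}, and the evaluations of $\alpha_{\bar F},\mu_{\bar F},\beta_{\bar F}$) are either established earlier or immediate from the normal crossing hypothesis.
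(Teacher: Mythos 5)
Your proposal is correct and follows essentially the same route as the paper: both reduce to the normal crossing model via the birational invariance of $\chi$, apply the formula $12\chi_{\bar F}=6N_{\bar F}+\bar F^2_{\rm red}+\alpha_{\bar F}+\mu_{\bar F}-\beta_{\bar F}$ with $\alpha_{\bar F}=0$, eliminate the $C_i^2$ terms by Zariski's lemma, and recognize the coefficient $3-\tfrac{n_i}{n_j}-\tfrac{n_j}{n_i}-\tfrac{(n_i,n_j)^2}{n_in_j}$ as $-12\chi(n_i,n_j)$. No substantive difference from the paper's argument.
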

\begin{proof}
 Note that $\chi_F$ is a birational invariant, so
$$\chi_F=\chi_{{\bar{F}}}=\frac{1}{2}N_{{\bar{F}}}+\frac{1}{12}(\mu_{{\bar{F}}}-\beta_{\bar F}+{\bar{F}}_{\rm{red}}^2).$$
By definition,
\begin{align*}
\mu_{{\bar{F}}}&=\sum\limits_{i<j}C_iC_j,\hskip0.3cm \beta_{\bar
F}=\sum\limits_{i<j} \frac{(n_i,n_j)^2}{n_in_j}C_iC_j, \hskip0.3cm
{\bar{F}}_{\rm{red}}^2=\sum\limits_{i<j}2C_iC_j+\sum\limits_{i=1}^{k}C_i^2.
\end{align*}
Since $C_i\bar F=0$,  $C_i^2=-\sum_{j\ne i}\frac{n_j}{n_i}C_iC_j$,
we have
$\sum_{i=1}^kC_i^2=-\sum_{i<j}\left(\frac{n_i}{n_j}+\frac{n_j}{n_i}\right)C_iC_j$.
Thus
\begin{align*}
{\mu_{{\bar{F}}}-\beta_{\bar F}+{\bar{F}}_{\rm{red}}^2 } &
=\sum\limits_{i<j}\left(3-\frac{(n_i,n_j)^2}{n_in_j}-\frac{n_j}{n_i}-\frac{n_i}{n_j}\right)C_iC_j
=-12\sum\limits_{i<j}\chi(n_i,n_j)C_iC_j.
\end{align*}
Hence
$\chi_{F}=\frac{1}{2}N_{{\bar{F}}}-\sum\limits_{i<j}\chi(n_i,n_j)C_iC_j$.
\end{proof}

\subsection{Duality theorem for $\chi$}

\begin{theorem}\label{theorem_chi_F} $F^{*}$ is the dual fiber of
$F$. Then
 $\chi_{F}+\chi_{F^{*}}=N_{{\bar{F}}}=N_{{\bar{F^*}}}$.
\end{theorem}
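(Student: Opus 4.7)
The plan is to apply the preceding formula $\chi_F = \frac{1}{2}N_{\bar F} - \sum_{i<j}\chi(n_i,n_j)C_iC_j$ to both $F$ and $F^*$, and to invoke Lemma~\ref{pa} to identify $N_{\bar F} = N_{\bar F^*}$. Writing $\Sigma_{\bar F}$ and $\Sigma_{\bar F^*}$ for the double sums on the right-hand side associated with $\bar F$ and $\bar F^*$, the theorem will follow once we establish
\[
\Sigma_{\bar F} + \Sigma_{\bar F^*} = 0,
\]
and I would prove this cancellation locally, one double point of $\bar F$ at a time.

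Fix a double point $p$ of $\bar F$ where $C_i$ and $C_j$ (of multiplicities $n_i=\gamma_0$ and $n_j=\gamma_{r+1}$) meet transversally. As recalled in Section~\ref{Sect2.3}, $\bar F^*$ contains at $p$ a Hirzebruch--Jung chain $\Gamma_1,\ldots,\Gamma_r$ of multiplicities $\gamma_1,\ldots,\gamma_r$, with successive curves crossing in a single transverse point. Self-nodes of a single component contribute nothing to either side, since $\chi(m,m)=0$. The contribution of $p$ to $\Sigma_{\bar F}$ is therefore $\chi(\gamma_0,\gamma_{r+1})$, while the chain contributes $\sum_{k=0}^{r}\chi(\gamma_k,\gamma_{k+1})$ to $\Sigma_{\bar F^*}$. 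So the theorem reduces to the purely arithmetic identity
\[
\chi(\gamma_0,\gamma_{r+1}) + \sum_{k=0}^{r}\chi(\gamma_k,\gamma_{k+1}) = 0
\]
holding for every such chain.

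To prove this identity I would expand each $\chi(a,b) = s(a,b)+s(b,a)$ by Dedekind reciprocity and regroup the resulting Dedekind sums by their second argument. For each interior vertex $\gamma_i$, $1\le i\le r$, the paired terms $s(\gamma_{i-1},\gamma_i)+s(\gamma_{i+1},\gamma_i)$ cancel: Lemma~\ref{gamma}(1) gives $\gamma_i\mid\gamma_{i-1}+\gamma_{i+1}$, and the Dedekind-sum identity $s(p,q)+s(p',q)=0$ applies whenever $q\mid p+p'$. What survives from the chain sum is $s(\gamma_1,\gamma_0)+s(\gamma_r,\gamma_{r+1})$. Lemma~\ref{gamma}(2) then supplies the two endpoint divisibilities $\gamma_0\mid\gamma_1+\gamma_{r+1}$ and $\gamma_{r+1}\mid\gamma_r+\gamma_0$, converting this remainder into $-s(\gamma_{r+1},\gamma_0)-s(\gamma_0,\gamma_{r+1})$; a final use of reciprocity identifies this with $-\chi(\gamma_0,\gamma_{r+1})$. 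Summing over all double points of $\bar F$ yields $\Sigma_{\bar F}+\Sigma_{\bar F^*}=0$.

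The main obstacle is precisely the arithmetic step above. The interior cancellations use only the standard three-term recursion of a Hirzebruch--Jung chain and make no contact with the duality. It is the two endpoint divisibilities of Lemma~\ref{gamma}(2), which in turn depend essentially on the defining hypothesis $n\equiv -1\pmod{M_F}$ of the dual model, that supply exactly the conversion needed to fold the two surviving boundary Dedekind sums into the single term $-\chi(\gamma_0,\gamma_{r+1})$ that kills the $\bar F$-contribution. Without the $n\equiv -1$ congruence the boundary terms would not collapse, so this identity is specific to the dual model and not a feature of arbitrary $n$-th root fibrations.
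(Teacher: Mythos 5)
Your proposal is correct and follows essentially the same route as the paper's proof: both reduce the statement to the local arithmetic identity $\chi(\gamma_0,\gamma_{r+1})+\sum_{k=0}^{r}\chi(\gamma_k,\gamma_{k+1})=0$ for each Hirzebruch--Jung chain inserted at a double point, and both prove it by Dedekind reciprocity together with the cancellation $s(p,q)+s(p',q)=0$ for $q\mid p+p'$, using Lemma~\ref{gamma}(1) at interior vertices and Lemma~\ref{gamma}(2) at the two endpoints, with Lemma~\ref{pa} supplying $N_{\bar F}=N_{\bar F^*}$. Your closing remark correctly isolates where the hypothesis $n\equiv-1\pmod{M_F}$ enters, namely through the endpoint divisibilities of Lemma~\ref{gamma}(2).
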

\begin{proof}  We use the notations in \S~\ref{Sect2.3}. We have seen that the normal crossing model $\bar{F^*}$ of $F^*$
is of the following type.
$$
\bar{F^*}=\sum_{i=1}^kn_iC_i^*+\sum_p\Gamma_p^*,
$$
where $p$ runs over all double points of $\bar F$, and
$\Gamma_p^*=\gamma_1\Gamma_1+\cdots+\gamma_r\Gamma_r$ is as follows,

 \setlength{\unitlength}{0.9mm}
\hfill\begin{picture}(168,12)(-50,-1)
\put(-35,0.5){\makebox(0,0)[l]{}} \put(0,0){\circle*{1.5}}
\multiput(7,0)(7,0){2}{\circle{1.5}}\multiput(1,0)(7,0){3}{\line(1,0){5}}
\multiput(21,0)(1,0){6}{\circle*{0.5}}\multiput(28,0)(7,0){3}{\line(1,0){5}}
\multiput(34,0)(7,0){2}{\circle{1.5}}\put(48,0){\circle*{1.5}}
\put(-1,4){\makebox(0,0)[l]{$C_i^*$}}\put(6,4){\makebox(0,0)[l]{$\Gamma_1$}}
\put(13.5,4){\makebox(0,0)[l]{$\Gamma_2$}}\put(30,4){\makebox(0,0)[l]{$\Gamma_{r-1}$}}
\put(41,4){\makebox(0,0)[l]{$\Gamma_r$}}\put(49,4){\makebox(0,0)[l]{$C_j^*$}}
\put(-10,-4){\makebox(0,0)[l]{$\gamma_0=n_i$}}\put(6.2,-4){\makebox(0,0)[l]{$\gamma_1$}}\put(13,-4)
{\makebox(0,0)[l]{$\gamma_2$}}
\put(31,-4){\makebox(0,0)[l]{$\gamma_{r-1}$}}\put(40,-4){\makebox(0,0)[l]{$\gamma_{r}$}}\put(47,-4)
{\makebox(0,0)[l]{$n_j=\gamma_{r+1}$}}
\end{picture}\\[15pt]

By 1) of Lemma \ref{gamma}, if $i=1,\cdots,r$, then $\gamma_i$
divides $\gamma_{i-1}+\gamma_{i+1}$, we have
$$
s(\gamma_{i-1}, \gamma_i)+s(\gamma_{i+1},
 \gamma_i)=0, \hskip0.3cm \textrm{ for
} i=1,\cdots,r.$$

By 2) of Lemma \ref{gamma}, we have
 \begin{align*}
 s(\gamma_1,\gamma_0)=-s(\gamma_{r+1}, \gamma_{0}), \hskip0.5cm &
 s(\gamma_r,\gamma_{r+1})=-s(\gamma_{0}, \gamma_{r+1}).
 \end{align*}
Hence
 \begin{align*}
{ \sum_{i=1}^{r+1} \chi(\gamma_{i-1},
\gamma_i)\Gamma_{i-1}\Gamma_{i}}
& =\sum_{i=1}^{r+1} (s(\gamma_{i-1}, \gamma_i)+s(\gamma_i, \gamma_{i-1}))\\
& =s(\gamma_{1}, \gamma_0)+s(\gamma_{r}, \gamma_{r+1})+\sum_{i=1}^r (s(\gamma_{i-1}, \gamma_i)+s(\gamma_{i+1}, \gamma_i))\\
 & =-s(\gamma_{r+1},\gamma_0)-s(\gamma_0,\gamma_{r+1})=-\chi(n_i,n_j).
 \end{align*}
 Thus
 \begin{align*}
\mu_{\bar{F^*}}-\beta_{\bar{F^*}}+{\bar{F^*}_{\rm{red}}}^2=-(\mu_{\bar
F}-\beta_{\bar F}+{\bar F_{\rm{red}}}^2).
 \end{align*}
By Lemma \ref{pa},
$p_a(\bar{F^*}_{\rm{red}})=p_a({\bar{F}}_{\rm{red}})$, so
$N_{\bar{F^*}}=N_{{\bar{F}}}$. We get
$\chi_F+\chi_{F^*}=N_{{\bar{F}}}$\ .
\end{proof}

\subsection{Upper and lower bounds on $\chi$}

\begin{theorem}
 $
\frac16{N_{{\bar{F}}}}\leq \chi_F\leq\frac56{N_{{\bar{F}}}}.$  If
$F$ is not semistable, then $\frac{1}{6}\leq \chi_F\leq
\frac{5g}{6}$.
\end{theorem}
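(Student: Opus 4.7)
The first move is to observe that the two inequalities are equivalent via the duality just proved. By Theorem \ref{theorem_chi_F} and Lemma \ref{pa}, $\chi_F+\chi_{F^*}=N_{\bar F}=N_{\bar F^*}$, so the upper bound $\chi_F\le\frac{5}{6}N_{\bar F}$ is precisely the lower bound $\chi_{F^*}\ge\frac{1}{6}N_{\bar F^*}$ applied to $F^*$. It therefore suffices to establish $\chi_F\ge\frac{1}{6}N_{\bar F}$ universally.

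For the lower bound, my approach is to combine Noether's equality $12\chi_F=c_1^2(F)+c_2(F)$ with the local formulas of \S\ref{S3.3}. Since $\bar F$ is normal crossing we have $\alpha_{\bar F}=0$, so
\[
12\chi_F \;=\; 6N_{\bar F}+\bar F_{\rm red}^2+\mu_{\bar F}-\beta_{\bar F},
\]
and the target bound $\chi_F\ge\frac{1}{6}N_{\bar F}$ becomes $\bar F_{\rm red}^2+\mu_{\bar F}-\beta_{\bar F}\ge -4N_{\bar F}$. Using the identity from the proof of Theorem \ref{theorem_chi_F}, this is in turn equivalent to the combinatorial estimate
\[
\sum_{i<j}\chi(n_i,n_j)\,C_iC_j \;\le\; \tfrac13\,N_{\bar F},
\]
a sum over the nodes of $\bar F_{\rm red}$, where at a node the two local branches carry multiplicities $n_i$ and $n_j$.

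\textbf{Main difficulty.} The crux is this combinatorial inequality. Because $\chi(n_i,n_j)=\frac{n_i^2+n_j^2+(n_i,n_j)^2-3n_in_j}{12n_in_j}$ is unbounded above as the multiplicities become disparate, one cannot bound it node-by-node by a universal constant; the positive excesses have to be absorbed by the global genus defect $N_{\bar F}$. The strategy is to use Dedekind reciprocity $\chi(p,q)=s(p,q)+s(q,p)$ to split each contribution and then to exploit the explicit Hirzebruch--Jung chain description of $\bar F^*$ furnished by Lemma \ref{gamma} and Cramer's rule: a node with large $\chi(n_i,n_j)$ inserts under the dual base change a long chain $\gamma_0,\gamma_1,\dots,\gamma_{r+1}$ of rational curves in $\bar F^*$, whose accumulated multiplicity structure is rigidly controlled by $n_i,n_j$ and contributes to $N_{\bar F^*}=N_{\bar F}$ in a measurable way. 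Book-keeping the chain contributions against the node contributions (essentially the telescoping used in the proof of Theorem \ref{theorem_chi_F}) should yield the factor $\tfrac13$. A fallback is to argue by induction on $\mu_{\bar F}$, contracting an end component and applying the inductive hypothesis to a simpler dual graph.

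\textbf{The second assertion.} Once the main inequality is established, the global bounds follow: $N_{\bar F}\le g$ always, while $\bar F=\bar F_{\rm red}$ is equivalent to $F$ being semistable (since the normal-crossing model has multiplicity $>1$ on some exceptional or multiple component whenever $F$ is not reduced and nodal). Thus if $F$ is not semistable then $N_{\bar F}\ge 1$, giving $\tfrac{1}{6}\le\chi_F\le\tfrac{5g}{6}$.
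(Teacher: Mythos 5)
Your reduction of the upper bound to the lower bound via the duality $\chi_F+\chi_{F^*}=N_{\bar F}=N_{\bar F^*}$ is exactly the paper's argument, and the identity $12\chi_F=6N_{\bar F}+\bar F_{\rm red}^2+\mu_{\bar F}-\beta_{\bar F}$ together with its reformulation as $\sum_{i<j}\chi(n_i,n_j)\,C_iC_j\le\tfrac13 N_{\bar F}$ is correct. The problem is that you never prove this last inequality: the paragraph headed ``Main difficulty'' is a plan, not an argument (``should yield the factor $\tfrac13$'', ``a fallback is to argue by induction''), so the entire substance of the lower bound is missing. Worse, the mechanism you propose --- Dedekind reciprocity plus the telescoping along Hirzebruch--Jung chains of Lemma \ref{gamma} --- is precisely what proves the \emph{symmetric} statement $\chi_F+\chi_{F^*}=N_{\bar F}$, where the chain contributions cancel the node contributions exactly; it yields an identity, not a one-sided estimate, and nothing in that bookkeeping sees the geometric hypotheses that make the bound true.

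The paper's proof of $12\chi_F\ge 2N_{\bar F}$ is short but rests on two inputs absent from your sketch. Working on the relatively minimal model $F$ (not on $\bar F$), it rewrites
\[
12\chi_F=2N_F+K_X(F-F_{\rm red})+(\mu_F-\alpha_F-\beta_F)+2N_{\bar F}+\textrm{(non-negative correction terms)},
\]
using $2N_F+F_{\rm red}^2=K_X(F-F_{\rm red})$ from adjunction; then $K_X(F-F_{\rm red})\ge0$ because $F$ contains no $(-1)$-curves, and $\mu_F\ge\alpha_F+\beta_F$ is Lemma \ref{mualphabeta}. By passing to the normal crossing model you forfeit the first input ($\bar X$ contains $(-1)$-curves over the non-nodal singularities, so $K_{\bar X}(\bar F-\bar F_{\rm red})\ge0$ is not available there), and you never invoke the second; your combinatorial inequality is equivalent to the theorem, so it must secretly encode both, but as a bare statement about multiplicities at nodes it is left unproved. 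Finally, the last assertion ($F$ not semistable $\Rightarrow N_{\bar F}\ge1$) is only gestured at: it needs either $\sum_i\tfrac12(m_i-1)(m_i-2)\ge1$ when $F_{\rm red}$ has a singularity worse than a node, or Zariski's lemma combined with minimality when $F$ is non-reduced with nodal support.
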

\begin{proof}
 By adjunction formula, $2N_F=K_X(F-F_{\rm red})-F_{\rm red}^2$.
By the resolution of the singularities of $F$, we have $p_a(F_{\rm
red})=p_a(\bar F_{\rm red})-\sum_i\frac12(m_i-1)(m_i-2)$, so
$2N_F=2N_{\bar F}-\sum_i(m_i-1)(m_i-2),$ where $m_i\geq 2$ are the
multiplicities of singularities  occurring in the partial
resolutions of $F$. By definition, $\alpha_F=\sum_i(m_i-2)^2$. From
formulas \eqref{1.1},
 \begin{align*}
12\chi_F&=6N_F +F^2_{\rm{red}}+\alpha_F+\mu_F-\beta_F
\\ & =2N_F+ (2N_F+F_{\rm red}^2)+(\mu_F-\alpha_F-\beta_F)+(2N_F+\alpha_F)\\
&=
2N_F+(F-F_{\rm{red}})K_X+(\mu_F-\alpha_F-\beta_F)+2N_{{\bar{F}}}+\sum_{i}
(m_i-2)(m_i-3),
 \end{align*}
 Since $F$ is minimal, $(F-F_{\rm{red}})K_X\geq
 0$. $\mu_F-\alpha_F-\beta_F\geq 0$ is proved in Lemma \ref{mualphabeta}.
Hence $12\chi_F\geq 2N_{{\bar{F}}}$.

 Similarly, $12\chi_{F^*}\geq
2N_{\bar {F^*}}=2N_{{\bar{F}}}$. On the other hand,
$\chi_F+\chi_{F^*}=N_{{\bar{F}}}$, so $12\chi_F\leq
10N_{{\bar{F}}}$.
\end{proof}
\begin{corollary}
 $\chi_F=\frac16{N_{{\bar{F}}}}$ (resp. $\chi_F=\frac56{N_{{\bar{F}}}}$) if and only if $F$ (resp. $F^*$) is a
reduced curve whose singularities
 are at worst ordinary cusps or
nodes.
\end{corollary}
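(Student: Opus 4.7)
The plan is to extract the equality case from the chain of inequalities proved in the previous theorem, and then pass to the dual fiber for the upper bound.

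Recall from the proof of the previous theorem the identity
\begin{equation*}
12\chi_F = 2N_F + (F-F_{\rm red})K_X + (\mu_F - \alpha_F - \beta_F) + 2N_{\bar F} + \sum_i (m_i-2)(m_i-3),
\end{equation*}
where the $m_i\ge 2$ are the multiplicities arising in the partial resolution of the singularities of $F$. Each of the summands
$2N_F$, $(F-F_{\rm red})K_X$, $\mu_F-\alpha_F-\beta_F$ and $\sum_i (m_i-2)(m_i-3)$
is nonnegative, the first because $p_a(F_{\rm red})\le g$, the second because $f$ is relatively minimal, the third by Lemma~\ref{mualphabeta}(1), and the fourth trivially. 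Consequently the equality $12\chi_F = 2N_{\bar F}$, i.e. $\chi_F = \tfrac16 N_{\bar F}$, forces all four terms to vanish simultaneously.

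Now I would read off the geometric meaning of each vanishing. The vanishing of $2N_F$ means $p_a(F_{\rm red})=g$, which (as recalled in \S\ref{S3.3}, and using $g\ge 2$) is equivalent to $F$ being reduced, so that $F=F_{\rm red}$ and automatically $(F-F_{\rm red})K_X=0$. By Lemma~\ref{mualphabeta}(1), $\mu_F-\alpha_F-\beta_F=0$ is equivalent to every singularity of $F_{\rm red}$ being of type $A_1$ or $A_2$, that is, a node or an ordinary cusp. For such singularities all the blow-up multiplicities $m_i$ occurring in the partial resolution equal $2$, so $\sum_i(m_i-2)(m_i-3)=0$ is automatic. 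This shows that $\chi_F=\tfrac16 N_{\bar F}$ if and only if $F$ is reduced and its singularities are at worst nodes and ordinary cusps.

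For the upper bound, apply the duality theorem $\chi_F+\chi_{F^*}=N_{\bar F}=N_{\bar{F^*}}$ (Theorem~\ref{THMnew1.1}). Then
\begin{equation*}
\chi_F = \tfrac{5}{6}N_{\bar F} \iff \chi_{F^*} = \tfrac{1}{6}N_{\bar{F^*}},
\end{equation*}
and the first case applied to $F^*$ in place of $F$ shows that this is equivalent to $F^*$ being reduced with only nodes and ordinary cusps as singularities.

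The only delicate point is making sure that each of the four nonnegativity arguments feeding into the equality case is genuinely forced in both directions, in particular that $\mu_F-\alpha_F-\beta_F=0$ really characterizes the $A_1,A_2$ case and that these singularities are compatible with $F$ being reduced. Both of these are guaranteed by Lemma~\ref{mualphabeta}(1) together with the fact that for $A_1$ and $A_2$ all partial-resolution multiplicities are $2$, so no additional case analysis is needed.
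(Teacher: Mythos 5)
Your proof is correct and is exactly the argument the paper intends: the paper's own proof is the one-line ``It follows from Lemma~\ref{mualphabeta}'', and what you wrote is the honest expansion of that, namely forcing each nonnegative term in the identity $12\chi_F = 2N_F + (F-F_{\rm red})K_X + (\mu_F-\alpha_F-\beta_F) + 2N_{\bar F} + \sum_i(m_i-2)(m_i-3)$ to vanish and then transferring the upper-bound case to $F^*$ via $\chi_F+\chi_{F^*}=N_{\bar F}=N_{\bar{F^*}}$. One small correction to your justification of the last term: for an ordinary cusp ($A_2$) not all partial-resolution multiplicities equal $2$ (indeed $\alpha_p=1$ there, per the paper's table), but since every $m_i\le 3$ for an $ADE$ point each factor $(m_i-2)(m_i-3)$ still vanishes, so the conclusion stands.
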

\begin{proof} It follows from Lemma \ref{mualphabeta}.
\end{proof}

\subsection{Applications}
\begin{theorem}
{\rm 1)} If  $f$ is non-trivial, then $ \chi_f\leq
\frac{g}{2}\left(2b-2+\frac{8}{3}s\right). $

  {\rm 2)} If $f$ is isotrivial, then
$ \chi_f\leq \frac{5gs}{6}. $
\end{theorem}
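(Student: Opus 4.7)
The plan is to reduce both statements to the relative invariant formula (\ref{0.3}), which gives $\chi_f = \lambda(f) + \sum_{i=1}^s \chi_{F_i}$, and then bound each term using results already in hand. For part 2), since $f$ is isotrivial the moduli map $J : C \to \overline{\sM}_g$ is constant, so the modular invariant vanishes: $\lambda(f) = \deg J^*\lambda = 0$. The Arakelov-type inequality of Theorem \ref{THMnew1.2}(3), namely $\chi_{F_i} \leq \tfrac{5g}{6}$, then yields $\chi_f = \sum_i \chi_{F_i} \leq \tfrac{5gs}{6}$ immediately.

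For part 1) I would split into two subcases. When $f$ is not isotrivial, the classical Arakelov inequality on the degree of $f_*\omega_{X/C} = \lambda(f)$ gives $\lambda(f) \leq \tfrac{g}{2}(2g(C) - 2 + s)$; combining this with $\chi_{F_i} \leq \tfrac{5g}{6}$ summed over the $s$ singular fibers produces
\[
\chi_f \leq \tfrac{g}{2}(2b - 2 + s) + \tfrac{5gs}{6} = \tfrac{g}{2}\!\left(2b - 2 + \tfrac{8s}{3}\right),
\]
which is exactly the claim. When $f$ is isotrivial but non-trivial, part 2) already gives $\chi_f \leq \tfrac{5gs}{6}$, and the target inequality $\tfrac{5gs}{6} \leq \tfrac{g}{2}(2b - 2 + \tfrac{8s}{3})$ rearranges to $b \geq 1 - \tfrac{s}{2}$, which is automatic whenever $b \geq 1$.

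The only remaining case is $b = 0$, where I must verify $s \geq 2$; this is the main obstacle and the only step not purely formal. Here I would use the structural fact about isotrivial families: there exists a finite base change $\pi : \tilde C \to \mathbb{P}^1$ making the pullback trivial, and $\pi$ is ramified exactly over the critical values of $f$. A connected finite cover of $\mathbb{P}^1$ with at most one branch point is trivial, because $\pi_1(\mathbb{P}^1 \setminus \{q\}) = 1$; so non-triviality of $f$ forces $\deg \pi \geq 2$ and hence at least two branch points, i.e.\ $s \geq 2$. With that in place the three subcases assemble to the theorem; the non-isotrivial subcase of part 1) is essentially just Arakelov's inequality plus the per-fiber bound of Theorem \ref{THMnew1.2}(3), and part 2) is a one-line consequence of the same per-fiber bound.
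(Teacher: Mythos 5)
Your overall architecture --- writing $\chi_f=\lambda(f)+\sum_i\chi_{F_i}$, bounding $\lambda(f)$ by an Arakelov-type inequality and each $\chi_{F_i}$ by $\frac{5g}{6}$ --- is exactly the skeleton of the paper's argument, and your treatment of part 2) and of the isotrivial-but-nontrivial subcase of part 1) (including the reduction to showing $s\ge 2$ when $b=0$) is reasonable. The gap is in the one step you treat as a citation: the inequality $\lambda(f)\le\frac{g}{2}(2b-2+s)$. You justify it by writing ``$f_*\omega_{X/C}=\lambda(f)$'' and invoking the classical Arakelov inequality, but $\deg f_*\omega_{X/C}=\chi_f=\lambda(f)+\sum_i\chi_{F_i}$, which strictly exceeds $\lambda(f)$ as soon as some fiber is non-semistable; and the Arakelov inequality in the form ``$\le\frac{g}{2}(2b-2+s)$ with $s$ the number of singular fibers'' is a statement about \emph{semistable} fibrations. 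If it really gave $\chi_f\le\frac{g}{2}(2b-2+s)$ for an arbitrary relatively minimal non-isotrivial $f$, the theorem would hold with $s$ in place of $\frac{8}{3}s$ and the constant $\frac{8}{3}$ would be pointless --- that should have been a warning sign.

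The statement $\lambda(f)\le\frac{g}{2}(2b-2+s)$ is true, but establishing it is precisely the content of the paper's proof: one constructs a semistable reduction $\pi:\tilde C\to C$ of degree $d$, ramified uniformly with index $e$ (divisible by every $M_{F_i}$ and arbitrarily large) over exactly the $s$ critical values --- via Kodaira--Parshin when $b>0$, and via a totally ramified cover when $b=0$, where non-isotriviality forces $s\ge 3$ --- then applies Arakelov to the semistable model $\tilde f$ and combines $\lambda(f)=\frac1d\chi_{\tilde f}$ with $2\tilde b-2=d(2b-2)+d(1-\frac1e)s$ and $\tilde s\le ds/e$. Your argument becomes correct once this derivation (or an explicit reference for the Arakelov bound on the modular invariant $\lambda(f)$, as opposed to $\deg f_*\omega_{X/C}$) is supplied; as written, the central inequality is asserted on the strength of an identification of $\lambda(f)$ with $\chi_f$ that fails exactly in the situation the theorem is about.
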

\begin{proof}
1) \  We assume first that $f$ is non-isotrivial.
 Let $F_1$, $\cdots$, $F_s$ be all of the singular fibers.  There exists some
  semistable reduction $\pi:\tilde{C}\to C$ such that

(i) $\pi$ is ramified uniformly over the $s$ critical points of $f$,
and the ramification index of $\pi$ at any ramified point is exactly
$e$.

(ii) $e$ is divisible by $M_{F_i}$ for all $i$, and it can be
arbitrarily large.

 In fact, if $b=g(C)>0$, the existence follows
from  Kodaira-Parshin's construction; if $b=0$, then $s\geq 3$. Thus
one can construct a base change totally ramified over the $s$
points. The existence is induced to the case  $b>0$.

Let $\tilde{f}:\tilde{S}\to \tilde{C}$ be the semistable model and $\tilde{s}$ be the number
of singular fibers of $\tilde{f}$.
Let $\tilde{b}=g(\tilde{C})$ and $d=\deg \pi$. One has
$$2\tilde{b}-2=d(2b-2)+d\left(1-\frac{1}{e}\right)s,\qquad \tilde{s}\leq \frac{ds}{e}.$$
Hence we have
 \begin{eqnarray*}
{ \chi_{f}-\frac{g}{2}\left(2b-2+\frac{8}{3}s\right)}
=\frac{1}{d}\left(\chi_{\tilde{f}}-\frac{g}{2}(2\tilde{b}-2+\tilde{s})\right)
+\frac{g}{2d}\left(\tilde{s}-\frac{ds}{e}\right)+\sum\limits_{i=1}^{s}\left(\chi_{F_i}-\frac{5g}{6}\right).
\end{eqnarray*}
$\chi_{\tilde{f}}\leq \frac{g}{2}(2\tilde{b}-2+\tilde{s})$ is the
Arakelov inequality, so one gets the inequality (1).

2) It is obvious. If $f$ is also non-trivial, then 3) of Theorem
\ref{THMnew1.2}
 implies 2).
\end{proof}

\section{Proof of Theorem \ref{THMnew1.3}}
\subsection{Fibers with high $c_1^2$}
We try to prove Theorem \ref{THMnew1.3}, which implies Theorem
\ref{THMnew1.2}, 2). To describe a fiber $F$, we usually consider
the dual graph of its normal crossing model $\bar F$. We use $\circ$
to denote a $(-2)$-curve, and $\bullet$ a smooth rational curve but
not a $(-2)$-curve. The number beside is the multiplicity of the
curve in $\bar F$. The self-intersection number of each component
$\bullet$ can be determined by using Zariski's lemma.

The following fiber $F$ of genus $g$ satisfies
$c_1^2(F)=4g-\frac{11}{2}$, $c_2(F)=2g+\frac{5}{2}$,
$\chi_F=\frac{g}{2}-\frac{1}{4}$.
\begin{example}\label{LiZi}  $F=(g-1)F_0$, where
$F_0$ is curve of genus $2$ whose
 dual graph   is as follows.
 \setlength{\unitlength}{0.9mm}

\hfill\begin{picture}(168,15)(-60,-5)
\put(-35,0.5){\makebox(0,0)[l]{}}
\put(0,0){\circle*{1.5}}\put(0.7,0.7){\line(1,1){5}}\put(0.7,-0.7){\line(1,-1){5}}
\put(6.5,6.5){\circle{1.5}}
\put(6.5,-6.5){\circle{1.5}}\put(13,0){\circle{1.5}}\put(12.3
,0.7){\line(-1,1){5}} \put(12.3,-0.7){\line(-1,-1){5}} \put(14
,0){\line(1,0){5}}\put(20,0){\circle{1.5}}
\put(-3,0){\makebox(0,0)[l]{$2$}}\put(3.5,7){\makebox(0,0)[l]{$3$}}\put(3.5,-7)
{\makebox(0,0)[l]{$3$}}
\put(13.5,3){\makebox(0,0)[l]{$4$}}\put(20.0,3){\makebox(0,0)[l]{$2$}}
\end{picture}\\[-4pt]
\end{example}

\begin{lemma}\label{Artin} {\bf (Artin \cite{Artin})} \ Let $D$ be an effective divisor on a surface.
Suppose $D^2<0$ and $D\Gamma_i\leq 0$ for any component $\Gamma_i$
of $D$. Then $D$ is a negative curve, i.e., the intersection matrix
$(\Gamma_i\Gamma_j)$ is negative definite.
\end{lemma}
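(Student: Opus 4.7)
The plan is to show that every nonzero $\Q$-divisor $E=\sum a_i\Gamma_i$ supported on the components of $D=\sum_{i=1}^{k}n_i\Gamma_i$ (with $n_i>0$) satisfies $E^{2}<0$. I would first change variables by writing $a_i=\lambda_i n_i$; this is the substitution that makes the hypotheses interact cleanly. I would also reduce at once to the case of a connected $D$, since each connected component $D_s$ inherits $D_s\cdot\Gamma_i\leq 0$ (hence $D_s^{2}\leq 0$), and $D^{2}=\sum_s D_s^{2}<0$ combined with the identity derived below handles the reduction.

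The core step is the algebraic identity
\begin{equation*}
E^{2} \;=\; \sum_{i=1}^{k}\lambda_i^{2}\,n_i\,(D\cdot\Gamma_i)\;-\;\sum_{i<j}(\lambda_i-\lambda_j)^{2}\,n_in_j\,(\Gamma_i\cdot\Gamma_j),
\end{equation*}
obtained by expanding $E^{2}=\sum_{i,j}\lambda_i\lambda_j n_in_j(\Gamma_i\cdot\Gamma_j)$ and then rewriting the off-diagonal contribution using $\sum_{j\neq i}n_j(\Gamma_i\cdot\Gamma_j)=D\cdot\Gamma_i-n_i\Gamma_i^{2}$. Both terms on the right-hand side are manifestly $\leq 0$: the first because $D\cdot\Gamma_i\leq 0$ while $\lambda_i^{2}n_i\geq 0$, and the second because $\Gamma_i\cdot\Gamma_j\geq 0$ for $i\neq j$ (distinct irreducible curves meet nonnegatively) while the coefficient is a nonnegative square. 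Thus $E^{2}\leq 0$.

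For strict definiteness I would analyze equality. Vanishing of the second sum forces $\lambda_i=\lambda_j$ whenever $\Gamma_i\cdot\Gamma_j>0$, i.e.\ whenever the two components meet, and connectedness of $D$ then propagates a single common value $\lambda$ to all $\lambda_i$; so $E=\lambda D$ as $\Q$-divisors. Then $E^{2}=\lambda^{2}D^{2}$, and since $D^{2}<0$ the only way $E^{2}=0$ is $\lambda=0$, i.e.\ $E=0$. Hence the intersection matrix $(\Gamma_i\cdot\Gamma_j)$ is negative definite. The only real obstacle is finding the correct identity: once the substitution $a_i=\lambda_i n_i$ is made and the difference-of-squares term $\sum_{i<j}(\lambda_i-\lambda_j)^{2}n_in_j(\Gamma_i\cdot\Gamma_j)$ is pulled out, the two hypotheses plug in mechanically. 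The small remaining bookkeeping is the disconnected case, where one uses the vanishing of $\sum\lambda_i^{2}n_i(D\cdot\Gamma_i)$ from the identity, together with $D^{2}=\sum_s D_s^{2}<0$, to rule out any nonzero $\lambda$ supported on a component with $D_s^{2}<0$.
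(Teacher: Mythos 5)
The paper does not prove this lemma at all---it is quoted from Artin's paper \cite{Artin}---so your argument has to stand on its own. Your key identity
\begin{equation*}
E^{2}=\sum_{i}\lambda_i^{2}\,n_i\,(D\cdot\Gamma_i)-\sum_{i<j}(\lambda_i-\lambda_j)^{2}\,n_in_j\,(\Gamma_i\cdot\Gamma_j),
\qquad E=\sum_i\lambda_i n_i\Gamma_i,
\end{equation*}
is correct: expanding the right-hand side and substituting $D\cdot\Gamma_i=n_i\Gamma_i^{2}+\sum_{j\ne i}n_j\Gamma_i\Gamma_j$ recovers $\sum_{i,j}\lambda_i\lambda_j n_in_j\Gamma_i\Gamma_j$. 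Both terms are indeed $\leq 0$, so the form is negative semi-definite, and for \emph{connected} $D$ your equality analysis is complete: vanishing of the cross term forces all $\lambda_i$ equal, hence $E=\lambda D$ and $E^{2}=\lambda^{2}D^{2}<0$ unless $\lambda=0$. This is exactly the classical proof of Artin's negativity lemma.

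The gap is the disconnected case, and it is not the "small remaining bookkeeping" you describe: the statement as printed is false without a connectedness hypothesis, so no argument can close it. Your reduction only excludes a nonzero constant $\lambda$ on a connected component $D_s$ with $D_s^{2}<0$; it says nothing about a component with $D_s^{2}=0$, and such components can coexist with $D^{2}<0$. For instance, let $D_1$ be a $(-1)$-curve and $D_2$ a disjoint full fiber of some fibration, so that $D_2\cdot\Gamma_i=0$ for every component $\Gamma_i$ of $D_2$ and $D_2^{2}=0$. Then $D=D_1+D_2$ satisfies $D^{2}=-1<0$ and $D\cdot\Gamma_i\leq 0$ for every component, yet $D_2$ lies in the radical of the intersection form, which is therefore only negative semi-definite. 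In every application in the paper the divisor in question is connected ($F_{\rm red}$, $C_0+Z_1$, $C_0+Z_1+Z_2$, a H-J chain, etc.), so nothing downstream is affected; but your proof should either impose connectedness at the outset, as Artin does, or state the correct general conclusion that follows from your identity: the form is negative semi-definite, and its radical is spanned by those connected components $D_s$ for which $D\cdot\Gamma_i=0$ for all $\Gamma_i\subset D_s$.
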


In  what follows, we always assume that $F$ satisfies $c_1^2(F)>
4g-\frac{11}2$,
 namely,
\begin{align}\label{c1bound}
4p_a({\bar{F}}_{\rm{red}})-F_{\rm{red}}^2+\beta_F^{-}+\sum\limits_{i=1}^rm_i(m_i-2)<\frac{11}{2}.
\end{align}
Note that each term on the left hand side of \eqref{c1bound} is
non-negative.
\begin{lemma}\label{7A} \ {\rm 1)} \  $m_i\leq 3$ for all $i$ and at most one $m_i$ is
equal to $3$.  So $F_{\rm red}$ admits at most one singular point
$p$ which is not a node. In fact, $p$ is of types $A_2$, $A_3$ or
$D_4$.

{\rm 2)} \ $\bar F_{\rm red} ^2\leq -1$.

{\rm 3)} \ $ p_a(\bar F_{\rm red})=0$, so $\bar F$ is a tree of
smooth rational curves.

{\rm 4)} \ $p_a(F_{\rm red})\leq 1$, with equality iff one singular
point $p$ of $F_{\rm red}$  is not a node as in {\rm 1)}.
\end{lemma}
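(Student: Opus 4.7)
The plan is to recast the hypothesis $c_1^2(F) > 4g - \frac{11}{2}$ through the formula $c_1^2(F) = 4N_F + F_{\rm red}^2 + \alpha_F - \beta_F^-$ combined with $N_F = g - p_a(F_{\rm red})$, $\alpha_F = \sum (m_i-2)^2$, and the arithmetic-genus relation $p_a(F_{\rm red}) - p_a(\bar F_{\rm red}) = \frac{1}{2}\sum(m_i-1)(m_i-2)$ coming from \eqref{ArithGenus}. The substitutions produce exactly the inequality \eqref{c1bound}, whose four summands $4 p_a(\bar F_{\rm red})$, $-F_{\rm red}^2$, $\beta_F^-$, and $\sum m_i(m_i-2)$ are each individually non-negative; the four claims will then be extracted by bounding them one at a time.

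For (1), I would argue directly from $\sum m_i(m_i-2) < \frac{11}{2}$: a single $m_i \geq 4$ contributes at least $8$ and two $m_i = 3$ contribute $6$, so $m_i \leq 3$ throughout with at most one $m_i = 3$. For any non-node singular point $p$ of $F_{\rm red}$, its own partial contribution is at most $5$, so Lemma \ref{ADE5} will identify $p$ as being of type $A_1, A_2, A_3$, or $D_4$, with $A_1$ ruled out by non-nodality. Each of $A_2, A_3, D_4$ consumes the unique $m_i = 3$ in its resolution sequence, so at most one non-node singular point can occur.

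For (2), the plan is a dichotomy. If $F_{\rm red}$ carries a non-node, its resolution introduces some $m_i = 3$, contributing $(m_i-1)^2 = 4$ to $\sum(m_i-1)^2$; the iterative identity $\bar F_{\rm red}^2 = F_{\rm red}^2 - \sum(m_i-1)^2$ together with $F_{\rm red}^2 \leq 0$ (Zariski's lemma) then yields $\bar F_{\rm red}^2 \leq -4$. Otherwise $\bar F = F$, and it remains to exclude $F = n F_{\rm red}$; but then adjunction gives $g = 1 + n(p_a(F_{\rm red})-1)$, while the formula for $c_1^2(F)$ reduces to $4(n-1)(p_a(F_{\rm red})-1)$, and combining with $c_1^2(F) > 4g - \frac{11}{2}$ forces $p_a(F_{\rm red}) < \frac{11}{8}$, which together with $g \geq 2$ is inconsistent.

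Finally, for (3) and (4): the bound immediately gives $p_a(\bar F_{\rm red}) \leq 1$, so (4) reduces to excluding $p_a(\bar F_{\rm red}) = 1$, namely (3). Under that assumption the bound forces $\sum m_i(m_i-2) = 0$ (no non-nodes, $\bar F = F$) and, combined with (2), $F_{\rm red}^2 = -1$ and $\beta_F^- < \frac{1}{2}$. The main obstacle is ruling out this remaining configuration: $F_{\rm red}$ must then be a connected nodal curve of arithmetic genus one (an elliptic component with a rational tree attached, or a cycle of rationals) carrying non-constant multiplicities, and the plan is to use relative minimality of $f$ (no $(-1)$-curves in $\bar F$), the identity $F_{\rm red}^2 = -\sum_{i<j}\frac{(n_i-n_j)^2}{n_i n_j}C_i\cdot C_j = -1$, and the explicit formula $\beta' = q/n$ from Lemma \ref{gamma} and \eqref{LuJun} (any $(-2)$-leaf already contributes $\frac{1}{2}$ to $\beta_F^-$) to exhibit an H-J branch with contribution $\geq \frac{1}{2}$, contradicting $\beta_F^- < \frac{1}{2}$. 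Once (3) is established, (4) is immediate: $p_a(F_{\rm red}) = p_a(\bar F_{\rm red}) + \frac{1}{2}\sum(m_i-1)(m_i-2)$ is $0$ or $1$, attaining $1$ exactly when the unique allowed $m_i = 3$ (i.e., the non-node) is present.
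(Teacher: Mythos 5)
Your reduction to \eqref{c1bound} is exactly the paper's starting point, and parts 1), 2) and 4) of your plan are sound. For 2) you take a genuinely different route from the paper: the paper notes that \eqref{c1bound} gives $p_a(\bar F_{\rm red})\le 1$, i.e.\ $K\bar F_{\rm red}+\bar F_{\rm red}^2\le 0$, and that $\bar F_{\rm red}^2=0$ would force $\bar F=n\bar F_{\rm red}$ and $2g-2=nK\bar F_{\rm red}\le 0$; your dichotomy via the blow-up identity $\bar F_{\rm red}^2=F_{\rm red}^2-\sum(m_i-1)^2$ and the adjunction computation for $F=nF_{\rm red}$ is correct and works equally well.

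The gap is in 3). After correctly reducing to the situation where $F_{\rm red}$ is nodal, $p_a(F_{\rm red})=1$, $F_{\rm red}^2=-1$ and $\beta_F^-<\frac12$, you propose to finish by exhibiting an H-J branch contributing at least $\frac12$ to $\beta_F^-$. That cannot work in general. Since $KF_{\rm red}=1$, exactly one component fails to be a $(-2)$-curve; when all components are rational the dual graph has exactly one cycle, and the configuration consisting of a single cycle of $(-2)$-curves with one $(-3)$-curve on the cycle satisfies every numerical constraint you have derived ($F_{\rm red}^2=-1$, $p_a=1$, no non-nodes) yet has no leaves at all, hence no H-J branches and $\beta_F^-=0$; similarly, a cycle with one chain attached whose sole leaf is the $(-3)$-curve yields a single H-J branch contributing only $r/(2r+1)<\frac12$. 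In these sub-cases the contradiction must come from somewhere other than $\beta_F^-$: the paper observes that $F_{\rm red}\Gamma\le 0$ for every component $\Gamma$ while $F_{\rm red}^2<0$, so Lemma \ref{Artin} makes the components span a negative-definite lattice, which is impossible for the support of a fiber; alternatively, Zariski's lemma forces all multiplicities around the cycle to be equal, contradicting your identity $-F_{\rm red}^2=\sum_{i<j}\frac{(n_i-n_j)^2}{n_in_j}C_iC_j=1$. The tools you list would suffice, but the contradiction you actually name ($\beta_F^-\ge\frac12$) is not the one that closes the decisive case, so as written the argument for 3) --- and hence for the ``only if'' direction of 4) --- is incomplete.
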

\begin{proof} 1) follows from the inequality $\sum\limits_{i=1}^rm_i(m_i-2)<11/2$ and Lemma \ref{ADE5}.

2) \ \eqref{c1bound} implies that  $p_a({\bar{F}}_{\rm{red}})\leq
1$, i.e., $K\bar F_{\rm red} + \bar F_{\rm red}^2\leq 0$.
 If $\bar F_{\rm red}^2=0$, then by Zariski's lemma, $\bar F=n\bar F_{\rm red}$ for
  some positive integer $n$. Since $K\bar F_{\rm red}\leq 0$, we see that
  $2g-2=K\bar F=nK\bar F_{\rm red}\leq 0$, a contradiction. So $\bar F_{\rm red} ^2\leq -1$.

3) \ Note that  $p_a({\bar{F}}_{\rm{red}})\leq 1$.
 Suppose that $p_a({\bar{F}}_{\rm{red}})=1$. Then $\sum\limits_{i=1}^rm_i(m_i-2)\leq 3/2$,
 so all $m_i=2$ and
  $F_{\rm{red}}=\bar F_{\rm{red}}$ is a nodal curve.  We see also that $-F_{\rm red} ^2<3/2$, so $
  F_{\rm{red}}^2=-1$, $KF_{\rm red}=1$, and $F$ consists of one
  $(-3)$-curve and some $(-2)$-curves. Now from \eqref{c1bound}, we get $\beta_{F}^{-}<\frac{1}{2}$.

If one $(-2)$-curve $E$ in $F$ meets at only one point with the
other components, then $E$ is the end point of some H-J branch, and
the contribution of $E$ to $\beta_F^-$ is at least $\frac12$, a
contradiction. Hence any $(-2)$-curve is a point in some loops in
the dual graph of $F$. Because $p_a(F_{\rm red})=1$, there is only
one loop in the dual graph. Hence the dual graph of $F$ consists of
one loop. Now we see that $F_{\rm{red}}\Gamma\leq 0$  for each
irreducible component $\Gamma$. Combine with
 $F_{\rm{red}}^2<0$, we know that $F$ is a negative curve (Lemma \ref{Artin}), a contradiction.

4) \  By Lemma \ref{7A} and \eqref{ArithGenus}, we have $p_a(F_{\rm
 red})=p_a(\bar F_{\rm
 red})+\sum_i\frac12(m_i-1)(m_i-2)\leq 1$.
\end{proof}

\subsection{The case $p_a(F_{\rm red})=1$}

\begin{proposition} If  $F$ is not a nodal curve, then $F_{\rm red}$ has one singular point of
 type $A_3$.  The normal crossing model of $F$  is of type 21. 
\end{proposition}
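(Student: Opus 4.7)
The plan is as follows. By the preceding lemma, since $F$ is not a nodal curve, $F_{\rm red}$ has exactly one non-nodal singular point $p$, of type $A_2$, $A_3$, or $D_4$, and $p_a(\bar F_{\rm red})=0$. In each of these three cases the multiplicities $m_i$ in the embedded resolution satisfy $\sum_i m_i(m_i-2)=3$: namely $(m_1,m_2,m_3)=(2,2,3)$ for $A_2$, $(m_1,m_2)=(2,3)$ for $A_3$, and $m_1=3$ for $D_4$. Hence the hypothesis $c_1^2(F)>4g-\tfrac{11}{2}$, via \eqref{c1bound} and $p_a(\bar F_{\rm red})=0$, becomes
\[
-F_{\rm red}^2+\beta_F^-<\tfrac{5}{2}.
\]
I will dispose of $A_2$ and $D_4$ first and then force the $A_3$ case to be type~21.

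For $A_2$, the embedded resolution of the cusp produces a tripod in $\bar F$ centred at a non-redundant $(-1)$-curve, with branches the strict transform $\tilde C$ of the cuspidal component $C_0$, a $(-3)$-curve, and a $(-2)$-curve; the latter two are H-J endpoints and always contribute $\tfrac13+\tfrac12=\tfrac56$ to $\beta_F^-$. If $\tilde C$ is also an endpoint then $F=nC_0$, Zariski forces $C_0^2=0$, and adjunction gives $g=1$, contradicting $g\geq 2$. Hence $C_0$ meets other components via nodes. Applying Zariski at every node of the attached tree together with $-D^2\geq 2$ for every smooth rational neighbour $D$ of $C_0$, one shows $C_0^2=-1$ and $\sum_i 1/(-D_i^2)=1$. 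The simplest admissible solution $\{-2,-2\}$ already yields $-F_{\rm red}^2+\beta_F^-=\tfrac{17}{6}>\tfrac52$; extending any $D_i$ by a further chain only enlarges both $-F_{\rm red}^2$ and $\beta_F^-$. An analogous argument disposes of $D_4$: the resolution is a single blow-up producing a $(-1)$-curve meeting three branches; without extensions $g=1$, and every extension pushes the quantity above $\tfrac52$.

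For $A_3$, the tacnode must occur between two distinct smooth rational components $C_1,C_2$ (a self-tacnode on a single irreducible rational component would give $\delta_p=2$ and hence $p_a(F_{\rm red})\geq 2$). The resolution produces a tripod with central $(-1)$-curve $E_2$ and branches $\tilde C_1,\tilde C_2,\tilde E_1$; the last is a $(-2)$-endpoint contributing $\tfrac12$ to $\beta_F^-$. Without further connections one gets $g=1$ or a smooth rational $(-1)$-curve violating minimality, so by symmetry I may assume $C_1$ is a pendant to the tacnode and $C_2$ is extended. Zariski at $C_1$ gives $C_1^2=-2n_2/n_1$; writing Zariski along the chain attached to $C_2$ together with integrality and the inequality $-F_{\rm red}^2+\beta_F^-<\tfrac52$ successively eliminates every longer chain, every branching at $C_2$, and every chain containing a link that is not a $(-2)$-curve. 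The only survivor is $n_1=2$, $n_2=3$ with a chain of two $(-2)$-curves $D$, $E$ attached to $C_2$, carrying multiplicities $2$ and $1$; this yields $C_1^2=-3$ and $C_2^2=-2$, whose minimal normal crossing model is precisely type~21.

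The main obstacle is the combinatorial enumeration in the $A_3$ case, where a priori infinitely many tree extensions must be considered. This becomes a finite problem because Zariski's lemma forces integrality of every self-intersection and multiplicity, while the tight bound $-F_{\rm red}^2+\beta_F^-<\tfrac52$ (whose left-hand side equals $\tfrac{12}{5}$ for type~21, very close to the boundary) drastically restricts the admissible numerical types.
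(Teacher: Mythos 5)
Your overall skeleton matches the paper's: reduce the hypothesis via \eqref{c1bound}, $\sum_i m_i(m_i-2)=3$ and $p_a(\bar F_{\rm red})=0$ to $-F_{\rm red}^2+\beta_F^-<\frac52$, then run the three cases $A_2$, $A_3$, $D_4$. The $A_2$ and $A_3$ discussions are essentially workable, though handled differently in places: the paper kills $A_2$ by showing the dual graph must be a chain with the cuspidal component at an end, hence negative definite by Artin's lemma, which is impossible for a fiber; and note that your assertion that extending a branch "enlarges both $-F_{\rm red}^2$ and $\beta_F^-$" is not quite right, since $-F_{\rm red}^2=K_XF_{\rm red}=1$ is unchanged when you append $(-2)$-curves (the conclusion survives only because the H-J contributions to $\beta_F^-$ grow with the length of the chain).

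The genuine gap is the $D_4$ case. Your claim that "every extension pushes the quantity above $\frac52$" is false: there are tree configurations with an ordinary triple point for which $-F_{\rm red}^2+\beta_F^-$ stays strictly below $\frac52$. For instance, take the $(-3)$-curve $C_1$ and two $(-2)$-curves $C_2,C_3$ through $p$ and attach a single chain of $r-1$ $(-2)$-curves to $C_2$; after blowing up $p$, the three H-J branches contribute $\frac14+\frac13+\frac{2r-1}{2r+1}$ to $\beta_F^-$, which is less than $\frac32$ for every $r\le 11$, so the numerical inequality does not exclude these shapes. What excludes them is that Zariski's lemma admits no consistent integral assignment of multiplicities: the paper's proof spends most of its length deriving, for each admissible shape of $\Gamma=F_{\rm red}-C_1-C_2-C_3$ (one or two connected $ADE$ components attached in the various possible ways), a Diophantine equation such as $1=\frac3{2r_1+1}+\frac2{3r_2+1}$, $\frac12=\frac1{2r_1+1}+\frac1{2r_2+1}$, or $3=3+\frac1{r_1}$, and checking that it has no nonnegative integer solutions. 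This arithmetic elimination is the core of the argument, it cannot be replaced by the $\beta$-bound alone, and your proposal omits it entirely. (A similar, though less serious, reliance on unstated Zariski computations occurs in your $A_3$ enumeration, where the length of the chain attached to $C_2$ is pinned down to two by the equation $3(k+2)=4(k+1)$ rather than by the inequality.)
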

\begin{proof}
 In this case,  $F$ has a unique
singularity $p$ of types $A_2$,  $A_3$, or $D_4$.
$p_a(F_{\rm{red}})=1$, one has
$-F_{\rm{red}}^2+\beta_F^{-}<\frac{5}{2}$. Since
$p_a({\bar{F}}_{\rm{red}})=0$, the dual graph of ${\bar{F}}$ is a
tree of rational curves.

\smallskip {\it Case ${A_2}$}: \ Suppose that  $p$ is of type $A_2$.
Then the contribution of $p$ to $\beta_F^{-}\geq \frac{5}{6}$,
  so
$-F_{\rm{red}}^2< \frac{5}{3}$. We have
$-F_{\rm{red}}^2=F_{\rm{red}}K_X=1$, and $\beta_F^{-}<\frac{3}{2}$.
Let $C_1$ be the irreducible component passing through $p$. Then
$KC_1=1$ and $F_{\rm{red}}-C_1$ is composed  of some $ADE$ curves.
Suppose that $F_{\rm{red}}-C_1$ contains at least two $(-2)$-curves
as the end points in the dual graph of $F$. Then their contributions
to
 $\beta_F^{-}$ is at least $1$. So $\beta_F^{-}\geq
 1+\frac{5}{6}>\frac{3}{2}$, a contradiction. So only one $(-2)$-curve is an end point.
  On the other hand, from $p_a(\bar F_{\rm red})=0$,
 we see that $F$ contains no loop. Hence  $F$ is a H-J chain with an end point
  $C_1$. It implies $F$ is a negative  curve, a contradiction.

\smallskip {\it Case $A_3$}: \  Assume that $p$ is of type $A_3$. The
contribution of $p$  to $\beta_F^{-}\geq \frac{1}{2}$ and so
$-F_{\rm{red}}^2<2$. Now we have $-F_{\rm{red}}^2=F_{\rm{red}}K_X=1$
and $\beta_F^-<\frac32$. $F$ consists of some $(-2)$-curves and one
curve $C_1$ passing through $p$. Note that $\bar F$ is a tree of
rational curves, so no node is a singular point of $C_1$, namely
$C_1$ is smooth except at $p$.  If $C_1$ is singular at $p$, then
there is no $(-2)$-curve passing through $p$. Similar to the
discussion above, only one $(-2)$-curve is the end point. Now we
know that $F$ is a chain of $(-2)$-curves and $C_1$, so $F$ is a
negative curve, a contradiction. Hence $C_1$ is smooth at $p$ and
there is a $(-2)$-curve $C_2$ tangent to $C_1$ at $p$. Because
$KC_1=1$, $C_1$ is a $(-3)$-curve.

 There is  a $(-2)$-curve $C_2$ tangent $C_1$ at $p$.  $F_{\rm{red}}-C_1-C_2$ consists of $ADE$ curves.
Because only one $(-2)$-curve is the end point, we know that
$\Gamma=F_{\rm{red}}-C_1-C_2$ is just a curve of type $A_n$.

   If $C_1$ intersects $\Gamma$, then $F_{\rm{red}}=\Gamma+C_1+C_2$ is a chain. One can
   prove that $F$ is a negative curve by Lemma \ref{Artin}, a
   contradiction. So $C_1$ is disjoint with $\Gamma$. $C_2+\Gamma$ is
   a connected curve of type $A_{n+1}$.

By using Zariski's lemma, one can determine the multiplicities of
all irreducible components in $F$ and the number of $(-2)$-curves.
Finally, we get the fiber of type 21.

\smallskip {\it Case $D_4$}: \ Suppose that  $p$ is of type $D_4$.
Because $\bar F$ is a tree of rational curves, the three local
branches of $F$ at $p$ come from 3 different components $C_1$, $C_2$
and $C_3$ of $F$. At least one component, say $C_1$, is not a
$(-2)$-curve since $g\geq 2$.
 Suppose that  $C_2$ is not a $(-2)$-curve. Then
  $ F_{\rm{red}}K_X\geq 2$.  Recall that $F_{\rm{red}}K_X=-F_{\rm{red}}^2\leq 2$,
 one has  $F_{\rm{red}}K_X=-F_{\rm{red}}^2=2$.
  Thus $\beta_F^{-}<\frac{1}{2}$ and $C_1K_X=C_2K_X=1$, namely, $C_1$ and $C_2$ are $(-3)$-curves.
  Hence
  $F_{\rm{red}}-C_1-C_2$ consists of $ADE$-curves whose contributions to $\beta_F^{-}\geq \frac{1}{2}$,
   a contradiction. Therefore $C_2$ and $C_3$ must be $(-2)$-curves. Similarly, we can prove that $C_1$
   is  not a $(-4)$-curve, hence it is a $(-3)$-curve. One can prove
   also that the other curves in $F$ are  $(-2)$-curves.
   Now we have
   $-F^2_{\rm red}=KF_{\rm red}=1$,  and $\beta_F^{-}<\frac{3}{2}$.

  The normal crossing model $\bar F$ of $F$ is obtained by blowing
  up $F$ at $p$. Since the intersection matrix of $C_1$, $C_2$ and
  $C_3$ is negative definite, $\Gamma=F_{\rm red}-C_1-C_2-C_3$
  consists of $s\geq 1$ connected $ADE$-curves
  $\Gamma_1,\cdots,\Gamma_s$. From $\beta_F^{-}<\frac{3}{2}$, we see
  that at most two end points are $(-2)$-curves, so $s\leq 2$.  Let $r_i-1$
  be the number of irreducible components  of $\Gamma_i$. Since $\beta_F<\frac{3}{2}$,
 $s\leq 2$.

Suppose $s=2$. Since at most two end points are $(-2)$-curves,
$\Gamma_1$ and $\Gamma_2$ are of types $A_{r_1-1}$ and $A_{r_2-1}$
respectively. In $\bar F$, $C_1^2=-4$, $C_2^2=C_3^2=-3$. $\Gamma_i$
meets $C_j$ at one point, so we obtain a H-J branch of type
$[2,2,\cdots,2,e_{r_i}]$, where $e_{r_i}=-C_j^2$.

Symmetrically, we only need to consider two cases: I) \ $\Gamma_1$
meets $C_2$ and $\Gamma_2$ meets $C_1$; II) \
$\Gamma_1$ meets $C_2$
and $\Gamma_2$ meets $C_3$.

In case I), from Zariski's lemma, one can find an equality
 $
\frac23=\frac{r_1}{2r_1+1}+\frac{r_2}{3r_2+1},
 $
i.e., $ 1=\frac3{2r_1+1}+\frac2{3r_2+1}. $ We claim that there are
no nonnegative integers $r_1$ and $r_2$ satisfying this equation.
Indeed, for $r_1=0$, $1$ or $2$, this equation has no nonnegative
integral solution $r_2$. So we can assume that $r_1\geq 3$.
Similarly, we can assume also that $r_2\geq 2$. Now the right hand
side is less than $1$. So case I) does not occur.

In case II), we have similarly
$\frac34=\frac{r_1}{2r_1+1}+\frac{r_2}{2r_2+1}$, i.e.,
$\frac12=\frac1{2r_1+1}+\frac1{2r_2+1}$. It is obvious that this
equation has no integral solutions. So case II) can not occur.

Suppose $s=1$.  If $\Gamma_1$ is of type $A_{r_1-1}$, by Zariski's
Lemma, we have either $\frac{12}{5}=\frac{2r_1+1}{r_1}$ or
$3=3+\frac1{r_1}$. These equations have no integral solutions. So
this case dose not occur.

Finally, we assume that $\Gamma_1$ is not of type $A_{r_1-1}$. Now
we see that there are two end points which are $(-2)$-curves, so the
contribution of them to $\beta^-_F$ is at least 1. On the other
hand, the contribution of the two components disjoint from
$\Gamma_1$ are at least $\frac14+\frac13=\frac7{12}$. So
$\beta_F^-\geq 1+\frac7{12}>\frac32$, a contradiction.

Up to now, we have proved that the case $D_4$ does not occur.
\end{proof}

\subsection{The case $p_a(F_{\rm red})=0$}
 From now on, we always assume that $F_{\rm{red}}$ is a tree of smooth rational curves, namely,
 $p_a(F_{\rm{red}})=0$. Hence \eqref{c1bound} becomes $-F_{\rm{red}}^2+\beta_F^{-}< \frac{11}{2}$.
Namely,
 \begin{equation}\label{7B}
 F_{\rm{red}}K_X+\beta_F^{-}< \frac{7}{2}.
 \end{equation}

\begin{lemma} We have $F_{\rm{red}}K_X=1$ and $F_{\rm{red}}^2=-3$. Namely, $F_{\rm{red}}$
consists of a $(-3)$-curve  and some $(-2)$-curves. So
$\beta^-_F<\frac52$.
\end{lemma}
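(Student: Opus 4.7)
The plan is to bracket $K_XF_{\rm red}$ between $1$ and $1$, then read off the remaining conclusions. For the lower bound: since $f$ is relatively minimal and every component of $F_{\rm red}=\bar F_{\rm red}$ is a smooth rational curve by Lemma~\ref{7A},~3), each component $C_i$ has $C_i^2=-e_i$ with $e_i\geq 2$, so $K_XC_i=e_i-2\geq 0$. The assumption $g\geq 2$ gives $K_XF=2g-2\geq 2$, so at least one $e_i\geq 3$, whence $K_XF_{\rm red}=\sum_i(e_i-2)\geq 1$.

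For the upper bound, \eqref{7B} already gives $K_XF_{\rm red}\leq 3$, so it suffices to exclude $K_XF_{\rm red}\in\{2,3\}$. The structural input is that any chain of smooth rational curves with self-intersections $\leq -2$ has negative definite intersection matrix; combined with $F_{\rm red}^2<0$ from Lemma~\ref{7A},~2) and Zariski's lemma, this forces the dual graph $G(\bar F)$ to contain at least one branching vertex. A standard degree count on the tree then gives $L\geq 3$ leaves. By the endpoint estimate of Section~3.2 every H-J branch contributes at least $1/e_{\text{leaf}}$ to $\beta_F^-$, and a length-$r$ chain of $(-2)$-curves contributes exactly $r/(r+1)\geq 1/2$, with equality only when $r=1$.

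For $K_XF_{\rm red}=3$, \eqref{7B} requires $\beta_F^-<1/2$. Since $\sum_i(e_i-2)=3$ allows at most three non-$(-2)$-components, hence at most three non-$(-2)$-leaves, a short enumeration of the possible multisets $\{e_{\text{leaf}_j}\}$ shows $\sum_{\text{leaves}}1/e_{\text{leaf}}>1/2$ in every case, a contradiction. For $K_XF_{\rm red}=2$, \eqref{7B} requires $\beta_F^-<3/2$; the case where every leaf is a $(-2)$-curve is handled immediately by $\beta_F^-\geq L/2\geq 3/2$. The remaining configurations---a $(-4)$-leaf with $(-2)$-leaves, two $(-3)$-leaves, or a single $(-3)$-leaf---are analysed by applying Zariski's lemma at the branching vertex adjacent to each abnormal leaf to solve for multiplicities, which expresses $\beta_F^-$ as an explicit rational function of the H-J branch lengths; a finite check using the integrality constraints from Zariski then shows $\beta_F^-\geq 3/2$ in every admissible configuration. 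This finite case analysis, driven by the Diophantine relations forced by Zariski at the branching point, is the main technical obstacle.

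Once $K_XF_{\rm red}=1$ is established, the adjunction identity $K_XF_{\rm red}+F_{\rm red}^2=-2$ (from $p_a(F_{\rm red})=0$) gives $F_{\rm red}^2=-3$, and $\sum_i(e_i-2)=1$ forces precisely one $(-3)$-curve with all other components $(-2)$-curves. Finally $\beta_F^-<5/2$ follows at once by substituting $K_XF_{\rm red}=1$ into \eqref{7B}.
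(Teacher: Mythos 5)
Your skeleton matches the paper's proof closely: the same lower bound $K_XF_{\rm red}\geq 1$, the same use of \eqref{7B} to cap $K_XF_{\rm red}$ at $3$, the same observation that the tree must branch (else $F_{\rm red}$ would be negative definite by Lemma \ref{Artin}), hence has at least three end points, and the same bookkeeping of end-point contributions to $\beta_F^-$. Your disposal of $K_XF_{\rm red}=3$ and of the all-$(-2)$-leaves subcase of $K_XF_{\rm red}=2$ is correct and is essentially what the paper does. The final deductions ($F_{\rm red}^2=-3$ from adjunction with $p_a(F_{\rm red})=0$, and $\beta_F^-<\tfrac52$ from \eqref{7B}) are also fine.

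However, there is a genuine gap exactly where you flag "the main technical obstacle": the remaining configurations for $K_XF_{\rm red}=2$ (a $(-4)$-leaf with $(-2)$-leaves; two $(-3)$-leaves; one $(-3)$-leaf with an interior $(-3)$-curve) are not actually excluded — you only assert that "a finite check" shows $\beta_F^-\geq\tfrac32$. This is the bulk of the paper's proof, and it is not a finite enumeration in the naive sense: the H-J branch lengths and multiplicities are unbounded parameters, and Zariski's lemma at the branching vertex yields Diophantine constraints such as $\frac{k}{k+1}+\frac{l}{l+1}+\frac{u}{v}=2$, $\frac{2}{2t+1}+\frac{3}{3l+2}=\frac12$, and $\frac1{2l+1}+\frac1{2t+1}=\frac{k}{2k+2}$, which must be analysed individually. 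In some subcases the paper shows there are no admissible solutions at all (e.g.\ $\frac1{2l+1}+\frac1{2t+1}=\frac14$ has no positive integral solutions once $\beta_F^-<\tfrac32$ forces $k=1$); in another there is a solution $(t,l)=(3,4)$ which must then be killed by computing $\beta_F^-=\frac{23}{14}>\tfrac32$ explicitly. Your blanket claim that every admissible configuration satisfies $\beta_F^-\geq\tfrac32$ is therefore the conclusion of the hard part of the argument, not a step in it; as written, the proof of the upper bound $K_XF_{\rm red}\leq 1$ is incomplete.
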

\begin{proof}
Suppose that  $F_{\rm{red}}K_X\geq 2$. Let $s$ be the number of
$(-2)$-curves as the end points in the dual graph of $F$.
$\beta_F^{-}<\frac{3}{2}$ implies $s\leq2$. Assume that the dual
graph of $F$ contains
  $r$ end points. Obviously $r\geq 3$.

   We claim first that $r=3$, $s=1$ and $F_{\rm{red}}K_X=2$.

 Indeed, there are at least $r-s$ end points which are not $(-2)$-curves.
So $F_{\rm{red}}K_X\geq r-s$ and $\beta_F^{-}< \frac{7}{2}+s-r$. On
the other hand, $\beta_F^{-}>\frac{s}{2}$. So $s\geq 2r-6$. Note
that $s\leq 2$, we get $r\leq 4$. If $r= 4$, then $s=2$. Then we see
that $1<\beta_F^{-}<\frac{3}{2}$ and $F_{\rm{red}}K_X=2$. It implies
also that two of the end points are $(-3)$-curves. Thus
 $\beta_F^{-}\geq 2(\frac{1}{2}+\frac{1}{3})>\frac{3}{2}$, a contradiction.  So $r=3$.

If $F_{\rm{red}}K_X=3$, then $\beta_F^{-}<\frac{1}{2}$. So any end
point is a $(-3)$-curve. Thus  $\beta_F^{-}\geq 1$ , a
contradiction. Hence $F_{\rm{red}}K_X=2$. It implies $s\geq
r-F_{\rm{red}}K_X= 1$.

Suppose that $s=2$. Since $r=3$ and $F$ is a tree of rational
curves, $F$ has two H-J chains of type $A_n$ and one H-J chain whose
end point is a $(-e)$-curve, $e=3$ or $4$. We have seen in
\S\ref{Sect3.2} that the multiplicities in a H-J branch increase
strictly from the end point to the other side.

Suppose $e=4$. From $F_{\rm{red}}K_X=2$, we see that all other
components are $(-2)$-curves. The dual graph of $F$ is as follows.
\setlength{\unitlength}{0.7mm}

\hfill\begin{picture}(168,15)(-30,-1)\footnotesize
\put(-35,-0.5){\makebox(0,0)[l]{}}
 \multiput(19,0)(7,0){3}{\circle{1.5}}\multiput(20,0)(7,0){2}{\line(1,0){5}}
 \multiput(0,0)(7,0){2}{\circle{1.5}}
 \multiput(1,0)(7,0){1}{\line(1,0){5}}
 \multiput(8.5,0)(1,0){10}{\circle*{0.5}}
 \multiput(39,7)(0,7){1}{\line(1,0){5}}
 \put(26,1){\line(0,1){5}}\put(26,7){\circle{1.5}}
 \multiput(28,7)(1,0){9}{\circle*{0.5}}
  \multiput(44,9)(1,0){1}{$a$}
  \multiput(36,9)(1,0){1}{$ea$}
   \multiput(-4,9)(1,0){1}{$(te-t+1)a=u$}
 \multiput(38,7)(0,7){1}{\circle{1.5}}
 \put(44.5,7){\circle*{1.5}}
 \multiput(34.5,0)(1,0){10}{\circle*{0.5}}\multiput(45,0)(7,0){2}{\circle{1.5}}
 \multiput(46,0)(7,0){1}{\line(1,0){5}}
\put(-0.5,-4){\makebox(0,0)[l]{$n$}}\put(5.5,-4){\makebox(0,0)[l]{$2n$}}
\put(17.5,-4){\makebox(0,0)[l]{$kn$}}
\put(32.5,-4){\makebox(0,0)[l]{$lm$}}\put(42.5,-4){\makebox(0,0)[l]{$2m$}}
\put(50.5,-4){\makebox(0,0)[l]{$m$}}\put(25,-4){\makebox(0,0)[l]{$v$}}
\end{picture}\\[15pt]
where $(k+1)n=(l+1)m=((t+1)e-t)a=v$ ($1\leq k\leq l$) and
${kn+lm+u}=2v$ by Zariski's lemma, so we have
$\frac{k}{k+1}+\frac{l}{l+1}+\frac{u}{v}=2$. It is easy to see that
$$\frac{k}{k+1}+\frac{l}{l+1}+\frac{1}{4}\leq\beta_F^{-}<\frac{3}{2}.$$
So either $k=l=1$, or $k=1$ and $l=2$. Now we see that
$\frac{u}{v}=1$ or $\frac76$. On the other hand, $v>u$, a
contradiction.

If $e=3$, then  there exists another $(-3)$-curve $E$. In fact, $E$
can not be in the center, otherwise $3v=kn+lm+u<v+v+v$, a
contradiction. $E$ can not be in the vertical branch, otherwise, we
have
$$\frac{k}{k+1}+\frac{l}{l+1}+\frac{1}{3}\leq\beta_F^{-}<\frac{3}{2},$$
it implies $k=l=1$, i.e., $n=m$ and $v=2n$. Since ${kn+lm+u}=2v$, we
have $u=v$, a contradiction with $v>u$. Hence $E$ must be a
component of the horizontal branch. Without loss of generality, we
assume that $E$ is on the right branch. Consider the contribution to
$\beta^-_F$, we have $k=1$ and $E$ intersects with the $(-2)$-curve
at the end. The dual graph of $F$ is as follows.
\setlength{\unitlength}{0.7mm}

\hfill\begin{picture}(168,18)(-30,-1)\footnotesize
\put(-35,-0.5){\makebox(0,0)[l]{}}
 \put(45.2,0){\circle*{1.5}}
 \put(52,0){\circle{1.5}}
 \multiput(19,0)(7,0){3}{\circle{1.5}}\multiput(20,0)(7,0){2}{\line(1,0){5}}
 \multiput(36.8,7)(0,7){1}{\line(1,0){5}}
 \put(26,1){\line(0,1){5}}\put(26,7){\circle{1.5}}
 \multiput(27,7)(1,0){9}{\circle*{0.5}}
 \multiput(36,7)(0,7){1}{\circle{1.5}}
 \put(42.5,7){\circle*{1.5}}
 \multiput(34.5,0)(1,0){10}{\circle*{0.5}}
 \multiput(45,0)(7,0){1}{\circle{1.5}}
 \multiput(46,0)(7,0){1}{\line(1,0){5}}
\put(17.5,-4){\makebox(0,0)[l]{$n$}}
\put(29,-4){\makebox(0,0)[l]{\tiny $3lm\hskip-0.1cm-\hskip-0.1cm
m$}}\put(43.5,-4){\makebox(0,0)[l]{$2m$}}
\put(51,-4){\makebox(0,0)[l]{$m$}}
\put(25,-4){\makebox(0,0)[l]{$v$}}
\put(13,10){\makebox(0,0)[l]{$(2t\hskip-0.09cm-\hskip-0.1cm1)a$}}
\put(40,10){\makebox(0,0)[l]{  $a$}}
\put(33.5,10){\makebox(0,0)[l]{$3a$}}
\end{picture}\\[15pt]
We have $v=2n=(2t+1)a=(3l+2)m$, and $n+(2t-1)a+(3l-1)m=2v$. It
implies
$$
\frac12+\frac{2t-1}{2t+1}+\frac{3l-1}{3l+2}=2\, ,
$$
i.e.,
$$
\frac{2}{2t+1}+\frac{3}{3l+2}=\frac12\, .
$$
This equation has only one solution $t=3$ and $l=4$. Now we can
compute $\beta^-_F=\frac{23}{14}>\frac32$, a contradiction.

 We have proved that $s=2$ can not occur. So $s=1$. The claim is
 proved.

 Finally, we need to exclude the case in the claim.

 $F$ has exactly two H-J branches whose end points are
   $(-3)$-curves. The remaining H-J branch is of type $A_n$ which contains $k$
   vertexes. The dual graph
   is as follows.
   \setlength{\unitlength}{0.7mm}

\hfill\begin{picture}(168,18)(-30,-1)\footnotesize
\put(-35,-0.5){\makebox(0,0)[l]{}}
 \put(52,0){\circle*{1.5}}
 \multiput(19,0)(7,0){3}{\circle{1.5}}\multiput(20,0)(7,0){2}{\line(1,0){5}}
 \multiput(0,0)(7,0){2}{\circle{1.5}}
 \multiput(1,0)(7,0){1}{\line(1,0){5}}
 \multiput(8.5,0)(1,0){10}{\circle*{0.5}}
 \multiput(36.8,7)(0,7){1}{\line(1,0){5}}
 \put(26,1){\line(0,1){5}}\put(26,7){\circle{1.5}}
 \multiput(27,7)(1,0){9}{\circle*{0.5}}
 \multiput(36,7)(0,7){1}{\circle{1.5}}
 \put(42.5,7){\circle*{1.5}}
 \multiput(34.5,0)(1,0){10}{\circle*{0.5}}
 \multiput(45,0)(7,0){1}{\circle{1.5}}
 \multiput(46,0)(7,0){1}{\line(1,0){5}}
\put(-0.5,-4){\makebox(0,0)[l]{$n$}}\put(5.5,-4){\makebox(0,0)[l]{$2n$}}
\put(17.5,-4){\makebox(0,0)[l]{$kn$}}
\put(29,-4){\makebox(0,0)[l]{\tiny $2lm\hskip-0.1cm-\hskip-0.1cm
m$}}\put(43.5,-4){\makebox(0,0)[l]{$3m$}}
\put(51,-4){\makebox(0,0)[l]{$m$}}
\put(25,-4){\makebox(0,0)[l]{$v$}}
\put(13,10){\makebox(0,0)[l]{$(2t\hskip-0.09cm-\hskip-0.1cm1)u$}}
\put(40,10){\makebox(0,0)[l]{  $u$}}
\put(33.5,10){\makebox(0,0)[l]{$3u$}}
\end{picture}\\[15pt]
where $(2l+1)m=(2t+1)u=(k+1)n=v$ ($l\leq t$) and $\frac{2l-1}{2l+1}+
\frac{2t-1}{2t+1}+\frac{k}{k+1}=2$ by Zariski's lemma, i.e.,
$\frac1{2l+1}+\frac1{2t+1}=\frac{k}{2k+2}$. Then we have
\begin{align*}
\beta_F^{-}&=\frac{l}{2l+1}+\frac{t}{2t+1}+\frac{k}{k+1} \\
&=1-\dfrac12\left(\frac1{2l+1}+\frac1{2t+1}\right)+\dfrac{k}{k+1}\\
&=1-\dfrac14\cdot\dfrac{k}{k+1}+\dfrac{k}{k+1}
=1+\dfrac34\cdot\dfrac{k}{k+1}<\dfrac32,
\end{align*}
we get $k=1$. It is easy to see that the equation
$\frac1{2l+1}+\frac1{2t+1}=\frac14$ has no positive integral
solutions $l$ and $t$. So the case in the claim is excluded.
 Hence
$F_{\rm{red}}K_X=2$ is impossible.

The lemma is finally proved.
 \end{proof}

Now $F$ consists of one $(-3)$-curve $C_0$ and some connected $ADE$
curves $\Gamma_1,\cdots, \Gamma_r$. Let $Z_i$ be the fundamental
cycle supported on $\Gamma_i$. Then $Z_i^2=-2$. See (\cite{BPV},
Ch.III, \S3) for the list of $Z_i$.

 Since $(C_0+Z_i)^2\leq 0$, $1\leq C_0Z_i\leq 2$. If $C_0Z_i=2$,
 then $Z_i$ can not be of type $A_n$, otherwise $Z_i$ is
 reduced and $C_0Z_i=2$ implies that $F$ is not a tree. Hence $Z_i$ must
 be of types $E_k$ or $D_n$.

\begin{lemma} If  $C_0Z_i=2$ for some $i$, then $g=2$ and $F$ is
 of types 10 $\sim$ 16.
 \end{lemma}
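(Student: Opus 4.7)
The plan is to leverage the constraints that $F_{\mathrm{red}}$ is a tree of smooth rational curves consisting of one $(-3)$-curve $C_0$ and $(-2)$-curves, together with the bound $\beta_F^- < 5/2$, via Zariski's lemma. Without loss of generality, fix $i=1$. Since the dual graph of $\bar F_{\mathrm{red}}$ is a tree, $C_0$ meets $\Gamma_1$ transversally at a single point lying on exactly one irreducible component $C$ of $\Gamma_1$; any other configuration would produce a cycle. Hence $C_0\cdot Z_1 = 2$ says the coefficient of $C$ in the fundamental cycle $Z_1$ equals $2$. Inspecting the $ADE$ fundamental cycles (\cite{BPV}, Ch.~III, \S3), this restricts $\Gamma_1$ to type $D_n$ $(n\geq 4)$, $E_6$, $E_7$, or $E_8$, with $C$ a specific vertex of multiplicity $2$ in $Z_1$.

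Next, since $C_0$ is the only non-$(-2)$ component of $F$, adjunction gives $2g-2 = K_X\cdot F = n_0\,K_X C_0 = n_0$, where $n_0$ is the multiplicity of $C_0$ in $F$; hence $g = 1+n_0/2$ and $n_0$ must be even. Applying Zariski's lemma $F\cdot D = 0$ for every irreducible $D\subset F$ determines the remaining multiplicities in terms of $n_0$: on $\Gamma_1$ they are exactly $(n_0/2)Z_1$, while on any auxiliary $\Gamma_j$ ($j\geq 2$) they are proportional to a Hirzebruch--Jung valuation, whose integrality imposes further constraints on $n_0$ and on the H--J data. In particular, the leaves of $Z_1$ (which have coefficient $1$ in $Z_1$ for $D_n$ and some $E_k$) become $(-2)$-curves of multiplicity $n_0/2$ in $F$, and these end points sit at the bottom of genuine H--J branches of $F$.

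Finally, I use $\beta_F^-<5/2$ to bound both $n_0$ and the number of auxiliary components. Each free $(-2)$-end of $\Gamma_1$ not attached to $C_0$, and each auxiliary chain $\Gamma_j$ of type $A_m$ attached anywhere, contributes a definite positive amount to $\beta_F^-$, computable from \eqref{beta-}; combined with the integrality conditions coming from Zariski's lemma, this should force $n_0=2$, and so $g=2$, and leave room for only a short list of auxiliary H--J chains off $C_0$ or off $\Gamma_1$. A case split by the type of $\Gamma_1$ and by the admissible extensions should yield exactly the seven fibers~10 through~16 in Theorem~\ref{THMnew1.3}: roughly, the $D_n$ cases (with various attaching vertices and optional tails) cover fibers 11--16, while the $E_k$ cases cover fiber~10.

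The main obstacle is the exhaustive case analysis. Ruling out $n_0\geq 4$ uniformly requires controlling how the multiplicities on all attached H--J branches scale with $n_0$ and showing that the resulting $\beta_F^-$ contribution always exceeds $5/2$; dually, matching each surviving configuration against one of the labeled fibers in the statement demands careful bookkeeping of multiplicities (via Zariski's lemma) and of self-intersections (to recover the $-e$ markings of each rational component).
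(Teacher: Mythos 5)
There is a genuine gap, and it sits exactly at the point you flag as "should force $n_0=2$". The quantity $\beta_F^-$ depends only on the \emph{ratios} of multiplicities along H--J branches, so it cannot by itself distinguish $n_0=2$ from $n_0=4$; nor does integrality save you. Concretely, take $\Gamma_1$ of type $E_6$ with $C_0$ attached at the coefficient-$2$ vertex $D_0$ satisfying $Z_1D_0=-1$, and two further $(-2)$-curves $E_1,E_2$ meeting $C_0$. The divisor $F=4C_0+4Z_1+2E_1+2E_2$ has $F^2=0$, $F\cdot\Gamma=0$ for every component, all multiplicities integral, $F_{\rm red}^2=-3$, $KF_{\rm red}=1$, and $\beta_F^-=\frac12+\frac12+\frac23+\frac23=\frac73<\frac52$; adjunction gives $K\cdot F=4$, i.e.\ $g=3$. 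Every constraint in your plan is satisfied, yet the lemma asserts $g=2$. What kills this configuration is not $\beta_F^-$ but the fact that all its multiplicities are even, i.e.\ it is a multiple fiber, which is impossible because $F_{\rm red}$ is simply connected (Xiao, cited in the paper). This primitivity input is the engine of the paper's proof: one exhibits a divisor $D$ supported on $F$ with $D^2=0$ (e.g.\ $D=2C_0+2Z_1+Z_2+Z_3$ when $r\geq 3$), and Zariski's lemma plus the non-multiple-fiber property force $F=D$, which yields $n_0=2$ and hence $g=2$ at a stroke. Your proposal never invokes primitivity, so it cannot close.

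A second, independent error: your claim that the multiplicities on $\Gamma_1$ are a rational multiple of the fundamental cycle $Z_1$ is false in general. Writing $W_1$ for the part of $F$ on $\Gamma_1$, Zariski's lemma gives $W_1\cdot D=-n_0\,\delta_{D,C}$ where $C$ is the attaching component; this makes $W_1$ proportional to $Z_1$ only when $C$ is the unique vertex with $Z_1\cdot C=-1$ (and then $W_1=n_0Z_1$, not $(n_0/2)Z_1$). In the $r=1$ case the paper shows $Z_1\cdot C=0$ is forced (otherwise $C_0+Z_1$ would be a negative cycle by Artin's lemma), and indeed fiber 10 has multiplicities $3,3,6,\dots,6,4,2$ on $\Gamma_1$, which are not proportional to the $D_n$ fundamental cycle $(1,1,2,\dots,2,1)$. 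So the case analysis you sketch would miss fiber 10 entirely (and, relatedly, your assignment of the $E_k$ cases to fiber 10 and the $D_n$ cases to fibers 11--16 is the reverse of what actually happens).
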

\begin{proof}
 {\it Step 1}: \  There is at most one $Z_i$ such that  $C_0Z_i=2$.
Otherwise if $Z_i$ and $Z_j$ satisfy $C_0Z_i=C_0Z_j=2$, then
   $(C_0+Z_1+Z_2)^2=1$, a contradiction. Without loss of generality,
   we assume $C_0Z_1=2$ and $C_0Z_i=1$ for all
   $i\geq 2$.

\smallskip
{\it Step 2}: \ Suppose $r\geq 3$.  One can check that
$$(2C_0+2Z_1+Z_2+Z_3)^2=0.$$
Note that $F$ is simply connected, $F$ can not be a multiple fiber
(\cite{Xi90}, p.389). So $F=2C_0+2Z_1+Z_2+Z_3$. Let $C_2$ be an
irreducible component of $Z_2$ such that $C_2Z_2<0$. From $FC_2=0$
we get $C_2Z_2=-2C_0C_2$. Since $(Z_2-C_2)^2\leq 0$, we have
$C_2Z_2\geq -2$, so $C_2Z_2=-2$ and $(C_2-Z_2)^2=0$, i.e., $Z_2=C_2$
is just one $(-2)$-curve. Similarly, $Z_3$ is also a $(-2)$-curve.
Recall that supp$(Z_1)$ is a curve of types $D_n$ or $E_k$, and
$C_0$ meets with $Z_1$ at the component $E$ with $EZ_1<0$. Because
$\beta_F^{-}<\frac{5}{2}$, $Z_1$ can not be of type $D_n$. Now one
can check that the
 possibilities are just the fibers of types 11, 12 and 13.

\smallskip
{\it Step 3}: \ Suppose $r=2$. Let $C_1$ be the irreducible
component of $Z_1$ such that $C_1Z_1<0$. Since  $Z_1$ is not a curve
of type $A_n$, one can check from the list that $C_1Z_1=-1$. Then
$(2C_0+2Z_1+Z_2-C_1)^2=-4C_0C_1$. If $C_0C_1=0$,
$F=2C_0+2Z_1+Z_2-C_1$. By Zariski's lemma, $0=FZ_1=-C_1Z_1$, a
contradiction. So $C_0C_1=1$.

Let $C_2$ be an irreducible component of $Z_2$ such that $C_0C_2=1$.
Since $C_0Z_2=1$, the multiplicity of $C_2$ in $Z_2$ is $1$. If
$Z_2C_2<0$, then one can check that $Z_2$ is of type $A_n$, $C_2$ is
at the end of $Z_2$ and $C_2Z_2=-1$. Consider $D=C_0+Z_1+Z_2$,  one
can check that $D\Gamma\le 0$ for each irreducible $\Gamma$ of $D$,
e.g., $C_1D=0$ and $C_2D= 0$. $D^2=-1$. By Lemma \ref{Artin}, $D$ is
a negative curve, a contradiction. Hence $Z_2C_2=0$.

Now we have
$$(2C_0+2Z_1+Z_2+C_2)^2=0.$$ Thus $F=2C_0+2Z_1+Z_2+C_2$.
Since $Z_2C_2=0$, $Z_2$ can not be irreducible. There is another
component $C_3$ of $Z_2$ such that $Z_2C_3<0$. Since
$0=FC_3=Z_2C_3+C_2C_3$, we see that $Z_2C_3=-1$, and $C_2C_3=1$.
Check  each type of $ADE$ fundamental cycles, one find that $Z_2$
must be of type $D_n$. From $\beta^-_F<\frac52$, we see that the
dual graph of $F$ has at most 4 $(-2)$-curves as its end points, so
$Z_1$ can not be of type $D_n$. Now we obtain that $F$ is of types
14, 15 and 16.

\smallskip
{\it Step 4}: \ Suppose $r=1$.  Let $C_1$ be the irreducible
component of $Z_1$ such that $C_0C_1=1$.  If $C_1Z_1<0$, $C_0+Z_1$
is a negative cycle by Lemma \ref{Artin}, a contradiction. So
$C_1Z_1=0$. Let $C_2$ be another irreducible component of $Z_1$ such
that $Z_1C_2<0$, one cane check that $Z_1C_2=-1$.

If $C_1C_2=0$, then $$(2C_0+2Z_1+C_1-C_2)^2=0.$$ So
$F=2C_0+2Z_1+C_1-C_2$. Thus $0=FC_0=-1-C_0C_2$, a contradiction.
Hence $C_1C_2=1$. By checking each type of $ADE$ fundamental cycles,
we see that $Z_1$ is of type $D_n$. Note that $C_2$ is unique in
$Z_1$. We claim that $C_1$ is not at the end of $D_n$. Otherwise, by
Lemma \ref{Artin}, $C_0+Z_1+C_1$ is a negative cycle, a
contradiction. So the position of $C_1$ is determined. Now we see
easily that $F$
is just the fiber of type 10. 
\end{proof}

From now on we always assume that $C_0Z_i=1$ for all $i$. So $C_0$
meets with a component whose multiplicity in $Z_i$ is 1. From
Zariski's lemma, one can determine the multiplicities of the
irreducible components of $Z_i$ in $F$ whenever the multiplicity of
$C_0$ is determined. The following are all possible partial dual
graphes of $C_0$ and $Z_i$ in $F$.

\setlength{\unitlength}{0.7mm}

\hfill\begin{picture}(178,19)(-30,-5)\footnotesize
\put(-35,0.5){\makebox(0,0)[l]{}} \put(-7,0){\circle*{1.5}}
\multiput(-6,0)(7,0){6}{\line(1,0){5}}\multiput(0,0)(7,0){6}{\circle{1.5}}
\multiput(21,7)(7,0){1}{\circle{1.5}}\multiput(21,1)(7,0){1}{\line(0,1){5}}
\put(-8.5,-4){\makebox(0,0)[l]{$2n$}}
\put(-0.5,-4){\makebox(0,0)[l]{$3n$}}
\put(5.5,-4){\makebox(0,0)[l]{$4n$}}
\put(12.5,-4){\makebox(0,0)[l]{$5n$}}\put(19.5,-4){\makebox(0,0)[l]{$6n$}}
\put(22.5,7){\makebox(0,0)[l]{$3n$}}
\put(26.5,-4){\makebox(0,0)[l]{$4n$}}\put(34.5,-4){\makebox(0,0)[l]{$2n$}}
\put(-33,7){\makebox(0,0)[l]{$C_0+E_7:$}}
\end{picture}

\setlength{\unitlength}{0.7mm}

\hfill\begin{picture}(78,12)(-30,-17)\footnotesize
\put(-35,0.5){\makebox(0,0)[l]{}} \put(-7,0){\circle*{1.5}}
\multiput(-6,0)(7,0){5}{\line(1,0){5}}\multiput(0,0)(7,0){5}{\circle{1.5}}
\multiput(14,7)(7,0){1}{\circle{1.5}}\multiput(14,1)(7,0){1}{\line(0,1){5}}
\put(-8.5,-4){\makebox(0,0)[l]{$3n$}}
\put(-0.5,-4){\makebox(0,0)[l]{$4n$}}
\put(5.5,-4){\makebox(0,0)[l]{$5n$}}
\put(12.5,-4){\makebox(0,0)[l]{$6n$}}\put(19.5,-4){\makebox(0,0)[l]{$4n$}}
\put(15,7){\makebox(0,0)[l]{$3n$}}
\put(27.5,-4){\makebox(0,0)[l]{$2n$}}
\put(-33,7){\makebox(0,0)[l]{$C_0+E_6:$}}
\end{picture}

\setlength{\unitlength}{0.7mm}

\hfill\begin{picture}(178,12)(-30,-11)\footnotesize
\put(-35,0.5){\makebox(0,0)[l]{}} \put(-7,0){\circle*{1.5}}
\multiput(-6,0)(7,0){3}{\line(1,0){5}}
\multiput(0,0)(7,0){3}{\circle{1.5}}
\multiput(16,0)(1,0){10}{\circle*{0.5}}
\multiput(27,0)(7,0){2}{\circle{1.5}}
\multiput(28,0)(7,0){1}{\line(1,0){5}}
\multiput(7,7)(7,0){1}{\circle{1.5}}
\multiput(7,1)(0,7){1}{\line(0,1){5}}
\put(-8.5,-4){\makebox(0,0)[l]{$2n$}}
\put(-5.5,4){\makebox(0,0)[l]{\tiny $\frac{(k+3)n}{2}$}}
\put(0,-4){\makebox(0,0)[l]{ $kn\hskip-0.1cm+\hskip-0.1cm n$}}
\put(11.0,3){\makebox(0,0)[l]{ $kn$}}
\put(8.8,7){\makebox(0,0)[l]{\tiny${(k+1)n}/{2}$}}
\put(25.5,-4){\makebox(0,0)[l]{$2n$}}
\put(32.5,-4){\makebox(0,0)[l]{$n$}}
\put(-33,7){\makebox(0,0)[l]{$C_0+D_{k+3}:$}}
\end{picture}

\setlength{\unitlength}{0.7mm}

\hfill\begin{picture}(78,8)(-30,-20)\footnotesize
\put(-35,0.5){\makebox(0,0)[l]{}} \put(-7,0){\circle*{1.5}}
\multiput(-6,0)(7,0){2}{\line(1,0){5}}\multiput(0,0)(7,0){2}{\circle{1.5}}
\multiput(9,0)(1,0){10}{\circle*{0.5}}\multiput(20,0)(7,0){3}{\circle{1.5}}
\multiput(21,0)(7,0){2}{\line(1,0){5}}
\multiput(27,7)(7,0){1}{\circle{1.5}}\multiput(27,1)(0,7){1}{\line(0,1){5}}
\put(-8.5,-4){\makebox(0,0)[l]{$2n$}} \put(-3.5,
-4){\makebox(0,0)[l]{ $2n$}} \put(3.5,-4){\makebox(0,0)[l]{ $2n$}}
\put(18.5,-4){\makebox(0,0)[l]{$2n$}}
\put(29,7){\makebox(0,0)[l]{$n$}}
\put(25,-4){\makebox(0,0)[l]{$2n$}}\put(32.5,-4){\makebox(0,0)[l]{$n$}}
\put(-33,7){\makebox(0,0)[l]{$C_0+D_m^*:$}}
\end{picture}
\setlength{\unitlength}{0.7mm}
\hfill\begin{picture}(178,0)(-69,-8)\footnotesize
\put(-35,0.5){\makebox(0,0)[l]{}} \put(-7,0){\circle{1.5}}
\multiput(-6,0)(7,0){2}{\line(1,0){5}}\multiput(0,0)(7,0){2}{\circle{1.5}}
\multiput(9,0)(1,0){10}{\circle*{0.5}}
\multiput(-18,0)(1,0){10}{\circle*{0.5}}
\multiput(20,0)(7,0){2}{\circle{1.5}}
\multiput(21,0)(7,0){1}{\line(1,0){5}}
\multiput(-26,0)(7,0){1}{\line(1,0){5}}\multiput(-27,0)(7,0){2}{\circle{1.5}}
\multiput(0,7)(7,0){1}{\circle*{1.5}}
\multiput(0,1)(0,7){1}{\line(0,1){5}}
\put(-8,-4){\makebox(0,0)[l]{$lm$}}
 \put(3.5,-4){\makebox(0,0)[l]{ $kn$}}
\put(18.5,-4){\makebox(0,0)[l]{$2n$}}
\put(26,-4){\makebox(0,0)[l]{$n$}}
\put(2.5,7){\makebox(0,0)[l]{$m+n$}}
 \put(-24,-4){\makebox(0,0)[l]{ $2m$}}
\put(-28,-4){\makebox(0,0)[l]{$m$}}
\put(-52,7){\makebox(0,0)[l]{$C_0+A_{k+l+1}:$}}
\put(33,0){\makebox(0,0)[l]{$(0\leq l\leq k)$}}
\end{picture}

Recall that $F_{\rm{red}}-C_0$ consists of $r$ connected
  components $\Gamma_1$, $\cdots$, $\Gamma_r$ and
  $\beta_F^{-}<\frac{5}{2}$.
Let $C_i$ be the irreducible component of $\Gamma_i$ meeting with
$C_0$,  and let $n_i$  be the multiplicity of $C_i$ in $F$. Let
$\beta_{i}$ be the contribution  of $\Gamma_i$ to $\beta_F^{-}$. If
$r\geq 2$, then we have
\begin{align}\label{beta_i}
3=\sum\limits_{i=1}^{r}\frac{n_i}{n_0},\qquad
\beta_F^{-}=\sum\limits_{i=1}^{r}\beta_i<\frac{5}{2}.
\end{align}

\begin{lemma} {\rm 1)}    $r\leq 3$, and
if $r=3$, then $F$ is the fiber of type 18. 

 {\rm 2)} If $r=2$ and all $\Gamma_i$ are not of type $A_n$, then $F$ is of types 5, 19, 20 and 22.

{\rm 3)} If $r=2$ and $\Gamma_1$ is of type $A_n$, then $F$ is is of
types 1, 2, 4, 6, 9 and 17.

{\rm 4)} If $r=1$, then $F$ is of types 3, 7 and 8.
\end{lemma}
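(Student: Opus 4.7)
The plan is to perform a case analysis on $r$, using two constraints derived earlier: Zariski's lemma applied to $C_0$ gives
\begin{equation*}
\sum_{i=1}^{r}\frac{n_i}{n_0}=3,
\end{equation*}
and from \eqref{c1bound} we have $\sum_{i=1}^{r}\beta_i<\tfrac{5}{2}$. The admissible configurations $(C_0,\Gamma_i)$ are exactly the five partial dual graphs $C_0+E_7$, $C_0+E_6$, $C_0+D_{k+3}$, $C_0+D_m^{*}$, $C_0+A_{k+l+1}$ listed just before the lemma. From each graph one reads off the ratio $n_i/n_0$ and the contribution $\beta_i$, so the case analysis reduces to a finite Diophantine enumeration.

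First I would rule out $r\geq 4$: each admissible graph has $n_i/n_0\geq$ some definite positive lower bound (checked against the five diagrams), so $r\geq 4$ already contradicts $\sum n_i/n_0=3$ or forces $\sum\beta_i\geq 5/2$. For $r=3$, the sum constraint is tight enough that each $\Gamma_i$ must be among the configurations realising the smallest possible $n_i/n_0$; running through the combinations compatible with $\beta_F^{-}<\tfrac52$ and then completing the multiplicities by Zariski's lemma on each $\Gamma_i$ leaves exactly type 18.

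For $r=2$ I split according to whether some $\Gamma_i$ is of type $A_n$. If both are of type $D$ or $E$, the contributions $\beta_i$ are comparatively large and the admissible pairs are few; writing down $n_1/n_0+n_2/n_0=3$ and $\beta_1+\beta_2<\tfrac52$ for each pair $(T_1,T_2)$ with $T_i\in\{E_6,E_7,D_{k+3},D_m^{*}\}$, together with the parity/divisibility forced by the multiplicities in the partial graphs, should leave only types 5, 19, 20, 22. If $\Gamma_1$ is of type $A_{k+l+1}$, then $\Gamma_2$ ranges over all five possibilities, and the two parameters $k,l$ make the analysis more delicate; here one uses $\beta_1=\tfrac{k}{k+1}\cdot\tfrac{l\cdot(\text{something})}{\ldots}$ read from the $A_{k+l+1}$ diagram, solves for $k,l$ case by case, and matches the outcome with types 1, 2, 4, 6, 9, 17. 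Finally for $r=1$, one has $n_1/n_0=3$, which as a ratio severely restricts the partial graph (the $E$ and generic $D$ graphs do not achieve it), and the resulting short enumeration produces types 3, 7, 8.

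The principal obstacle is the $A_{k+l+1}$ branches in the $r=2$ case: with two integer parameters per $A$-branch and two branches possible, the Diophantine system mixes the equation $n_1/n_0+n_2/n_0=3$ (a rational identity in $k,l$ and any parameters of $\Gamma_2$) with the inequality $\beta_1+\beta_2<\tfrac52$ and with the divisibility relations forced by Zariski's lemma further along the chain. The bookkeeping must be organised carefully: one fixes the type of $\Gamma_2$, writes each $\beta_i$ as a rational function of the parameters using the formula \eqref{beta-}, solves for the allowed parameter ranges, and then checks that the remaining multiplicities along $\Gamma_i$ computed by Zariski's lemma really are positive integers matching one of the numbered diagrams. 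A secondary point is to verify in each surviving case that $(-2)$-curves at end points are not too numerous, so that $\beta_F^{-}<\tfrac{5}{2}$ is genuinely met rather than only formally; this rules out apparent solutions that violate the strict inequality.
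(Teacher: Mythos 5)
Your overall strategy coincides with the paper's: impose the two constraints $\sum_{i}n_i/n_0=3$ (Zariski's lemma on $C_0$) and $\beta_F^-<\frac52$, and run a finite enumeration over the five partial dual graphs $C_0+E_7$, $C_0+E_6$, $C_0+D_{k+3}$, $C_0+D_m^*$, $C_0+A_{k+l+1}$. For parts 2)--4) the paper itself gives no more detail than you do (it says only ``similar calculations''), so your outline there is on par with the source.

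The one step the paper does write out --- $r\le 3$ and the $r=3$ case --- is where your sketch has a genuine gap. You assert that $r\ge 4$ ``contradicts $\sum n_i/n_0=3$ or forces $\sum\beta_i\ge\frac52$,'' but neither disjunct follows from the crude bounds you invoke: four branches can perfectly well have ratios summing to $3$ (e.g.\ $\frac12,\frac12,\frac12,\frac32$), and the end-point estimate $\beta_i\ge\frac12$ only yields $\beta_F^-\ge 2$. What actually closes the case is the identity, coming from \eqref{LuJun} and \eqref{beta-}, that a branch which is an entire H-J chain of $(-2)$-curves hanging off $C_0$ (an $A_n$-chain) contributes to $\beta_F^-$ \emph{exactly} $n_i/n_0$, whereas any branch that is not such a chain has at least two end points and hence contributes at least $1$. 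Then $r=4$ forces either all four branches to be $A_n$-chains, giving $\beta_F^-=\sum n_i/n_0=3$, or at least one non-chain, giving $\beta_F^-\ge 1+3\cdot\frac12=\frac52$; both contradict $\beta_F^-<\frac52$. The same identity organizes $r=3$: two of the three branches must be $A_n$-chains, so $\beta_F^-=3-n_3/n_0+\beta_3$, and one checks $\Gamma_3$ against each of $E_7$, $E_6$, $D_m^*$, $A_{k+l+1}$, $D_{k+3}$ --- only $D_{k+3}$ with $k=5$ survives, yielding type 18. Without this identity your Diophantine enumeration does not close; with it inserted, the rest of your plan is sound and is essentially the paper's argument.
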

\begin{proof}
1) Note first that if all $\Gamma_i$ are of type $A_n$ and form some
H-J branches, then by a straightforward computation, we have
$\beta_F^{-}=\sum_{i=1}^{r}\frac{n_i}{n_0}=3>\frac{5}{2}$, a
contradiction.
 Since $\beta_F^{-}<\frac{5}{2}$, $r\leq 4$. If $r=4$, then all
$\Gamma_i$ are of type $A_n$ and form H-J branches, impossible.
 Hence $r\leq 3$.

 Assume $r=3$. $\beta_F^{-}<\frac{5}{2}$ implies $F_{\rm{red}}-C_0$ contains two
 H-J branches of type $A_n$, say $\Gamma_1$ and  $\Gamma_2$. By
 \eqref{beta_i}, we have
 \begin{align*}
 \beta_F^-&=\frac{n_1}{n_0}+\frac{n_2}{n_0}+\beta_3
          =3-\dfrac{n_3}{n_0}+\beta_3.
 \end{align*}
If $\Gamma_3$ is of types $E_7$, $E_6$ or $D_m^*$, then one can
check that $\beta_F^-=\frac83$, $\frac{17}6$ and $4$ respectively,
which contradicts the condition $\beta_F^-<\frac52.$ If $\Gamma_3$
is of type $A_{k+l+1}$ as above, then $k\geq 1$ and $l\geq 1$ (since
$\Gamma_3$ is not a  H-J branch of type $A_n$). On the other hand,
$\frac{n_1}{n_0}\geq \frac12$ and $\frac{n_2}{n_0}\geq \frac12$, so
$\frac{n_3}{n_0}\leq 2$.   $n_3=(l+1)m=(k+1)n$, so
$\frac{n_3}{n_0}=\frac{(k+1)(l+1)}{(k+1)+(l+1)}\leq 2$, $(l\leq k)$,
we obtain that $l=1$. Hence
$$
\beta_F^-=3+\dfrac{k}{k+1}+\dfrac{1}{2}-\dfrac{2(k+1)}{k+3}=\dfrac52+\dfrac{3k+1}{(k+1)(k+3)}>\dfrac52,
$$
a contradiction.

Finally, assume that $\Gamma_3$ is of type $D_{k+3}$ as above.
$\beta_3=\frac12+\frac{k}{k+1}$, so
$$
\beta_F^-=3+\frac12+\frac{k}{k+1}-\dfrac{k+3}{4}<\dfrac52,
$$
we get $k\geq 5$. On the other hand,  $\frac{n_3}{n_0}\leq 2$, i.e.,
$\frac{k+3}{4}\leq 2$ and $k\leq 5$. Hence $k=5$,
$2n_1=2n_2=n_0=2n$. Because $F$ can not be a multiple fiber,
$n=1$. This is just the fiber of type 18. 

2), 3) and 4) can be proved by similar calculations.
\end{proof}

\subsection{Applications}

The local canonical class inequality has some interesting applications. It has
been used to establish the canonical class inequality for non-semistable fibrations.
Now we give a new proof of the following
well-known result.
\begin{corollary} Let $f:X\to \mathbb P^1$ be a nontrivial fibration
of genus $g\geq 1$. Then $f$ admits at least $2$ singular fibers.
\end{corollary}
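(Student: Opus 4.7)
The plan is to argue by contradiction: assume $f: X \to \mathbb P^1$ is nontrivial with $s \leq 1$ singular fibers. The main tool for $g \geq 2$ is the global canonical class inequality for (possibly non-semistable) fibrations,
\[
K_f^2 \leq (2g-2)\bigl(2g(C) - 2 + s\bigr),
\]
which is a consequence of the local canonical class inequality $c_1^2(F_i) \leq 4g - \tfrac{24}{5}$ from Theorem \ref{THMnew1.2}(2), combined with the decomposition $K_f^2 = \kappa(f) + \sum_i c_1^2(F_i)$ and bounds on the modular invariant $\kappa(f)$ obtained by applying Arakelov's slope inequality to a semistable reduction of $f$. Plugging $g(C) = 0$ and $s \leq 1$ into this inequality yields $K_f^2 \leq -(2g-2) \leq 0$, contradicting Xiao's positivity $K_f^2 \geq 0$. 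The boundary case $K_f^2 = 0$ would force $f$ to be smooth and isotrivial by Xiao's characterization; but since $\mathbb P^1$ is simply connected, any smooth family over $\mathbb P^1$ is trivial (twists are classified by $H^1(\mathbb P^1, \underline{\mathrm{Aut}}(F)) = 0$), giving a contradiction.

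The case $g = 1$ requires a separate, classical treatment since both sides of the canonical class inequality vanish. If $s = 0$, then $f$ is a smooth elliptic fibration over a simply-connected base, so the $J$-function $\mathbb P^1 \to \overline{\mathcal M}_1 = \mathbb P^1$ has no poles, hence is constant, hence $f$ is isotrivial and therefore trivial. If $s = 1$, the geometric monodromy around the lone singular point factors through $\pi_1(\mathbb P^1 \setminus \{p\}) = \pi_1(\mathbb C) = 1$ and is thus trivial; inspecting Kodaira's table, only the multiple-smooth type ${}_mI_0$ has trivial monodromy, and for such a fibration \eqref{0.1} gives $c_2(X) = 0$, hence $\chi(\mathcal{O}_X) = 0$, which by the Enriques-Kodaira classification forces $f$ trivial.

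The main technical obstacle is justifying the global canonical class inequality with the sharp coefficient $(2g-2)$ per singular fiber. The local bound alone only yields $c_1^2(F_i) \leq 4g - \tfrac{24}{5}$, which exceeds $2g-2$ for $g \geq 2$, so one must combine it with the Arakelov-type control on $\lambda(f)$ (equivalently, use Cornalba--Harris-type slope bounds on the semistable reduction) to absorb the excess through the term $\kappa(f)$. A cleaner alternative, at least in the isotrivial subcase, is to note that $\pi_1(\mathbb P^1 \setminus S) = 1$ for $|S| \leq 1$ already forces trivial geometric monodromy, so the smooth part of $f$ is a trivial family, and relative minimality of $f$ then forces $X \cong F \times \mathbb P^1$, contradicting either nontriviality (if $s = 0$) or the existence of a singular fiber (if $s = 1$).
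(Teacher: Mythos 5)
Your primary argument for $g\geq 2$ rests on the global inequality $K_f^2\leq (2g-2)\left(2g(C)-2+s\right)$ for an arbitrary relatively minimal fibration, and this inequality is false outside the semistable case. The derivation you sketch does not produce it: passing to a semistable reduction and applying the semistable canonical class inequality there bounds only the modular part, $\kappa(f)\leq (2g-2)(2g(C)-2+s)$, and the decomposition $K_f^2=\kappa(f)+\sum_i c_1^2(F_i)$ then gives $K_f^2\leq (2g-2)(2g(C)-2+s)+\sum_i c_1^2(F_i)$; the local terms are strictly positive for non-semistable fibers and cannot be absorbed. With $g(C)=0$ and $s=1$ this yields only $K_f^2\leq -(2g-2)+(4g-\frac{24}{5})=2g-\frac{14}{5}>0$, so there is no contradiction with $K_f^2\geq 0$. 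The inequality also fails in concrete examples: for the genus-$2$ isotrivial fibration obtained by resolving $(F\times\mathbb P^1)/\langle(\sigma,-1)\rangle$, with $\sigma$ the hyperelliptic involution, there are two singular fibers over $\mathbb P^1$, each of the form $2C_0+E_1+\cdots+E_6$ with $c_1^2=2$, so $K_f^2=4>0=(2g-2)(2g(C)-2+s)$. In the situation of the corollary the fibration is isotrivial, hence $\kappa(f)=0$ and $K_f^2=c_1^2(F)\geq 0$: what is needed is a \emph{lower} bound on $K_f^2$ to play against the \emph{upper} bound $c_1^2(F)\leq 4g-\frac{24}{5}$ of Theorem \ref{THMnew1.2}, 2). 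That is exactly the paper's route: it quotes $K_f^2\geq 4(g-1)$ from \cite{TTZ05}, whence $c_1^2(F)\geq 4g-4>4g-\frac{24}{5}$, a contradiction. Your route, which tries to force $K_f^2<0$, cannot be repaired along the lines you indicate.

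By contrast, the ``cleaner alternative'' you mention in passing is sound for $g\geq2$ and is a genuinely different, more elementary argument; it should be your main proof of that case. Isotriviality together with $\pi_1(\mathbb P^1\setminus S)=1$ makes the smooth part a trivial bundle $F\times(\mathbb P^1\setminus S)$ (the structure group ${\rm Aut}(F)$ is finite for $g\geq 2$), and the \emph{uniqueness of the relatively minimal model} of a fibration of genus $\geq 1$ --- not mere relative minimality, as you phrase it --- identifies $X$ with $F\times\mathbb P^1$, contradicting nontriviality when $s=0$ and the presence of a singular fiber when $s=1$. For $g=1$ your reduction to type ${}_m{\rm I}_0$ via trivial local monodromy is correct and close in spirit to the paper's use of \eqref{0.1}, but ``$\chi(\mathcal O_X)=0$ forces $f$ trivial by Enriques--Kodaira'' is not a proof: hyperelliptic surfaces have $\chi(\mathcal O_X)=0$ and carry nontrivial relatively minimal elliptic fibrations over $\mathbb P^1$ with multiple fibers. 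The paper closes this case by computing $K_X\sim-(n+1)E$, observing that $X$ is birationally ruled with $q=1$, and deriving $2=-K_XF'=(n+1)EF'\geq 3$ on a fiber $F'$ of the Albanese ruling; some such argument is required at the end of your $s=1$, $g=1$ case.
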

\begin{proof} If $f$ is smooth, then it is trivial. Now we assume
that $f$ admits only one singular fiber $F$. In this case, $f$ is
isotrivial.  So
 \begin{equation}\label{2.1}
  c_1^2(X)=-8(g-1)+c_1^2(F), \hskip0.3cm
c_2(X)=-4(g-1)+c_2(F).
 \end{equation}

If $g\geq 2$, we proved in \cite{TTZ05} that
$c_1^2(X)+8(g-1)=K_f^2\geq 4(g-1)$. By (\ref{2.1}), we have
$c_1^2(F)\geq 4g-4$, a contradiction.

If $g=1$, then $12\chi(\mathcal O_X)=c_2(X)=c_2(F)$. So $c_2(F)$ is
divided by $12$. We know that $c_2(F)=0$, and $F=nE$ for some smooth
elliptic curve $E$ and $n\geq 2$. Hence $\chi(\mathcal O_X)=0$. By
the formula for canonical class, we have $K_X\sim -{(n+1)} E$. Hence
$X$ is birationally ruled, $p_g(X)=0$ and $q(X)=1$. The Albanese map
$\alpha:X\to B$ is the ruling. Let $F'$ be a fiber of $\alpha$. Then
$2=-K_XF'=(n+1)EF'\geq n+1\geq 3$, a contradiction.

This proves that $f$ admits at least 2 singular fibers.
\end{proof}
\subsection{Chern numbers of the fibers in Theorem \ref{THMnew1.3}}
In order to prove 2) of Theorem \ref{THMnew1.2}, we need to compute
the Chern numbers for all 22 singular fibers in Theorem
\ref{THMnew1.3}.
\\

{\center
\begin{tabular}{|c|c|c|c|c|c|c|c|c|c|c|c|c|c|c|c|c|c|}\hline
 \phantom{\footnotesize$\dfrac1{1}$} $F$\phantom{\footnotesize$\dfrac1{1}$} & 1 & 2
  & 3 &4 & 5 & 6
   & 7 & 8 &9 &10& 11\\
   \hline
  \phantom{\footnotesize$\dfrac11$}$g$\phantom{\footnotesize$\dfrac11$} & 6 & 4
  & 3 &3 & 3 & 3
   & 2 & 2 &2 &2 &2\\
  \hline
  \phantom{$\dfrac11$}$c_1^2$\phantom{$\dfrac11$} &$\frac{130}{7}$
  & $\frac{54}{5}$ & 7 &$\frac{48}7$  &$\frac{98}{15}$  &$\frac{20}{3}$&$\frac{16}{5}$  & 3&3 &3 &$\frac{8}{3}$\\ \hline

  \phantom{$\dfrac11$}$c_2$\phantom{$\dfrac11$} & 30 & 26 & 21 & 18& $\frac{268}{15}$& 20  & 16  &15 &15 &9 &$\frac{34}{3}$\\ \hline

  \phantom{$\dfrac11$}$\chi$\phantom{$\dfrac11$} & $\frac{85}{21}$ &$\frac{46}{15}$  &$\frac73$  & $\frac{29}{14}$& $\frac{61}{30}$ &
 $\frac{20}{9}$ &$\frac{8}{5}$   &$\frac{3}{2}$
 &$\frac{3}{2}$ & 1&$\frac{7}{6}$\\ \hline
 \end{tabular}\\[0.3cm]}

 {\center
\begin{tabular}{|c|c|c|c|c|c|c|c|c|c|c|c|c|c|c|c|c|c|}\hline
  \phantom{\footnotesize$\dfrac11$}$F$\phantom{\footnotesize$\dfrac11$} & 12&13&14&15&16&17&18&19&20&21&22\\
   \hline
  \phantom{\footnotesize$\dfrac11$}$g$\phantom{\footnotesize$\dfrac11$} &2&2&2&2&2&2&2&2&2&2&2\\
  \hline
  \phantom{$\dfrac11$}$c_1^2$\phantom{$\dfrac11$} &$\frac{11}{4}$  & $\frac{17}{6}$ & $\frac{8}{3}$ & $\frac{11}{4}$ & $\frac{17}{6}$ &
 $\frac{14}{5}$ &$\frac{8}{3}$
  &$\frac{8}{3}$
 &$\frac{8}{3}$ &$\frac{13}{5}$ &$\frac{31}{12}$\\ \hline
  \phantom{$\dfrac11$}$c_2$\phantom{$\dfrac11$} &$\frac{49}{4}$  & $\frac{79}{6}$ &$\frac{34}{3}$  &$\frac{49}{4}$ &$\frac{79}{6}$  &
 14 & $\frac{40}{3}$  &$\frac{40}{3}$
 &$\frac{52}{3}$ &7 &$\frac{197}{12}$\\ \hline
 \phantom{$\dfrac11$}$\chi$\phantom{$\dfrac11$} & $\frac{5}{4}$ &$\frac{4}{3}$  &$\frac{7}{6}$  &$\frac{5}{4}$ & $\frac{4}{3}$ &
 $\frac{7}{5}$ & $\frac{4}{3}$  &$\frac{4}{3}$
 &$\frac{5}{3}$ &$\frac{4}{5}$ &$\frac{19}{12}$\\ \hline
 \end{tabular}\\[0.7cm]}

\section{Proof of Theorem \ref{THMnew1.4}}
In this section, we will classify all singular fibers satisfying $
2c_2(F)-c_1^2(F)<6$.
\begin{lemma}\label{lemma6.1} If $2c_2(F)-c_1^2(F)\neq 0$, then
$2c_2(F)-c_1^2(F)\geq 3$.
\end{lemma}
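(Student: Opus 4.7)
The plan is to exploit an explicit identity for $2c_2-c_1^2$ obtained by combining the three formulas of Section~\ref{S3.3}:
\begin{equation*}
2c_2(F)-c_1^2(F) \;=\; 2(\mu_F-\alpha_F-\beta_F)\;+\;(\alpha_F-F_{\rm red}^2)\;+\;3\beta_F^-.
\end{equation*}
All three summands are manifestly nonnegative: the first by Lemma~\ref{mualphabeta}(1), the second by Zariski's lemma ($F_{\rm red}^2\le 0$) together with $\alpha_F\ge 0$, and the third by construction. They vanish simultaneously precisely when $F=nF_{\rm red}$ with $F_{\rm red}$ nodal, which reproduces the Miyaoka--Yau equality case (Property (5) in the introduction). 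So it suffices to show that whenever $F$ is not of this form, the sum is at least $3$.

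I would first handle the case in which $F_{\rm red}$ carries a singularity $p$ worse than a node. By Lemma~\ref{mualphabeta}(4) in its ``$<7/2$ implies node'' form, the local contribution $2(\mu_p-\alpha_p-\beta_p)+\alpha_p+3\beta_p^-$ from such a $p$ is at least $7/2$, and the remaining contributions are nonnegative, so $2c_2-c_1^2\ge 7/2>3$. Thus only the nodal case requires real work; we then have $\alpha_F=0$.

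In the nodal case, using $FC_i=0$ to eliminate the $C_i^2$ from $F_{\rm red}^2$, a direct substitution yields
\begin{equation*}
2(\mu_F-\beta_F)-F_{\rm red}^2 \;=\; \sum_{i<j}(C_iC_j)\,\frac{n_i^2+n_j^2-2\gcd(n_i,n_j)^2}{n_in_j},
\end{equation*}
where $F=\sum n_iC_i$. Writing the coprime ratio $n_i:n_j=a:b$ when $n_i\ne n_j$, each nonzero factor equals $(a^2+b^2-2)/(ab)\ge 3/2$, with equality only for $1:2$. I would group the components by multiplicity into classes $G_1,\ldots,G_s$; the hypothesis $F\neq nF_{\rm red}$ forces $s\ge 2$. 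When $s\ge 3$, connectedness of the reduced graph produces at least $s-1\ge 2$ cross-class pairs, each contributing at least $3/2$, so the total is at least $3$.

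The delicate case is $s=2$, a single cross-class pair with coprime ratio $a:b$. Applying Zariski's identity to each class ($m_j\mid m_ie_{ij}$) forces $a\mid e_{12}$ and $b\mid e_{12}$, hence $ab\mid e_{12}$; the cross-class contribution is then at least $ab\cdot(a^2+b^2-2)/(ab)=a^2+b^2-2\ge 3$. The main obstacle I foresee is precisely this divisibility step: without it one only gets the weaker bound $3/2$, and the inequality would fail for the fiber of type~$6$ in Theorem~\ref{THMnew1.4}, where $2c_2-c_1^2=3$ is attained sharply.
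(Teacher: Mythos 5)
Your proof is correct. The opening coincides with the paper's: the identity $2c_2(F)-c_1^2(F)=2(\mu_F-\alpha_F-\beta_F)+\alpha_F+3\beta_F^{-}-F_{\rm red}^2$ together with Lemma~\ref{mualphabeta}, 4) reduces everything to a nodal $F_{\rm red}$ with $\alpha_F=0$. From there the two arguments genuinely diverge. The paper assumes $2c_2-c_1^2<3$; since $-F_{\rm red}^2\geq 1$ once $F\neq nF_{\rm red}$, it gets $\mu_F-\beta_F<1$, and as each node with unequal branch multiplicities contributes at least $\frac12$, there is at most one such node, forcing $F=nA+mB$ with $AB=1$, which Zariski's lemma ($nA^2=-m$, $mB^2=-n$) excludes. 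You instead fold $-F_{\rm red}^2$ into the node-by-node bookkeeping through the exact identity
\begin{equation*}
2(\mu_F-\beta_F)-F_{\rm red}^2=\sum_{i<j}C_iC_j\,\frac{n_i^2+n_j^2-2\gcd(n_i,n_j)^2}{n_in_j},
\end{equation*}
so each unbalanced node carries weight at least $\frac32$ rather than $\frac12$; two cross-class intersections already give $3$, and the only case requiring Zariski is that of exactly two multiplicity classes, where $aA^2=-b\,(AB)$ yields $ab\mid AB$ and a total cross-class contribution of at least $a^2+b^2-2\geq 3$. The two invocations of Zariski's lemma are really the same computation read differently: the paper specializes it at $AB=1$ to force $m=n$ and a contradiction, while you keep $AB$ general to extract divisibility. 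Your route is slightly longer but more quantitative --- it exhibits $2c_2-c_1^2$ in the nodal case as an explicit nonnegative sum and pins down the equality configurations ($a{:}b=1{:}2$, $AB=2$), which are exactly the fibers 6)--7) of Theorem~\ref{THMnew1.4} --- whereas the paper's contradiction argument is shorter but yields only the bound.
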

\begin{proof} We have
\begin{align}\label{eq2c2c12tan}
2c_2(F)-c_1^2(F)=2(\mu_F-\beta_{F}-\alpha_F)+\alpha_F+3\beta_F^{-}-F_{\rm{red}}^2<3.
\end{align}
In particular, we have $\sum_{p\in F}\left(
2(\mu_p-\beta_{p}-\alpha_p)+\alpha_p+3\beta_p^{-}\right)<3+F_{\rm
red}^2\leq 3$. By Lemma \ref{mualphabeta}, 4), $F$ is a nodal curve,
and so $\alpha_F=0$. If $F_{\rm red}^2=0$, then $F=nF_{\rm red}$ and
$2c_2(F)-c_1^2(F)=0$. If $F_{\rm red}^2\leq -1$, then
$\mu_F-\beta_F<1$. Note that if a node $p$ satisfies $\beta_p\neq
1$, then $\beta_p\leq \frac12$, and $\mu_p-\beta_p\geq \frac12$.
Hence $\mu_F-\beta_F<1$ implies that at most one node $p$ satisfies
$\beta_p\neq 1$. So $F=nA+mB$, $A$ and $B$ are reduced nodal curve
and $AB=1$. By Zariski's lemma, $0=AF=nA^2+mAB=nA^2+m$, similarly,
$n+mB^2=0$. Hence $m=n$, and $F_{\rm red}^2=0$, a contradiction.
This proves the lemma.
\end{proof}

\noindent {\it Proof of Theorem \ref{THMnew1.4}}: We can assume that
$2c_2(F)-c_1^2(F)\geq 3$
\begin{align}\label{eq2c2c12}
3\leq
2(\mu_F-\beta_{F}-\alpha_F)+\alpha_F+3\beta_F^{-}-F_{\rm{red}}^2<6.
\end{align}
In particular, we have $\sum_{p\in F}\left(
2(\mu_p-\beta_{p}-\alpha_p)+\alpha_p+3\beta_p^{-}\right)<6+F_{\rm
red}^2\leq 6$. By Lemma \ref{mualphabeta}, 4), $F$ admits at most
one singular point $p$ which is not a node, and $p$ is of types
$A_2$, $A_3$ or $D_4$.

 If $F^2_{\rm red}=0$, then $F=nF_{\rm red}$. Then one can compute
all the local invariants directly, and we get the cases 2) $\sim$
5). In what follows we assume that $F_{\rm red}^2\leq -1$. So
$\sum_{p\in F}\left(
2(\mu_p-\beta_{p}-\alpha_p)+\alpha_p+3\beta_p^{-}\right)<5$, and $p$
is at worst of $A_3$.

 Let $s$
 be the number of nodes in $F_{\rm red}$ satisfying $\beta_q<1$. For such a node
 $q$, the two components of $F$ at $q$ have distinct multiplicities. So
 $\beta_q\leq\frac12$ and $\mu_q-\beta_q\geq
 \frac{1}{2}$.

If the non-nodal singular point $p$ exists, then $p$ is of types
$A_2$ or $A_3$. $2(\mu_p-\beta_{p}-\alpha_p)+\alpha_p+3\beta_p^{-}$
is at least $\frac72$ if $p$ is of types $A_2$ or $A_3$, so we get
$\frac{7}{2}+s<5$ , i.e., $s\leq 1$. As in the proof of the previous
lemma, $s=1$ is impossible by Zariski's lemma. So $s=0$, i.e., the
multiplicities of the two local branches of any node are the same.
Hence $p$ is of type $A_3$.

From Zariski's lemma,
 one has a decomposition $F=n(A+2B)$,  where $A$ and $B$ are connected reduced nodal
 curves and smooth at $p$,
 $A\cap B=\{p\}$, $A^2=-4$, $B^2=-1$, $AB=2$. We
 get case 8).

From now on  we always assume $F_{\rm red}$ is a nodal curve. By
assumption, $s\neq 0$. As in the proof of the previous lemma, $s=1$
is also impossible. So $2\leq s< 6+F_{\rm red}^2\leq 5$
 by \eqref{eq2c2c12}, i.e., $2\leq s\leq 4$.

Let $F=\gamma_1\Gamma_1+\cdots+\gamma_r\Gamma_r$, $r\geq 2$, where
$\Gamma_i$'s are reduced with $\Gamma_i^2=-e_i\leq -1$ (not
necessarily irreducible) and have no pairwise common components.
$\gamma_1,\cdots,\gamma_r$ are pairwise distinct. Then we have
$$r-1\leq \sum_{i<j}\Gamma_i\Gamma_j=s, \hskip0.3cm F_{\rm red}^2=2s-e_1-\cdots-e_r.$$
If $r-1=s$, then $\Gamma_1$, $\cdots,$ $\Gamma_r$ form a chain.
Assume that this is a chain like the one before Lemma \ref{gamma},
where $\gamma_0=\gamma_{r+1}=0$. So the liner equation
\eqref{LinearEquation} holds true. Since $\gamma_0=0$, we can see
from the equation that $\gamma_1$ divides $\gamma_i$ for any $i$.
Symmetrically, from $\gamma_{r+1}=0$, we know that $\gamma_r$
divides $\gamma_i$ for all $i$. So $\gamma_1=\gamma_r$, which
contradict our assumption. So $r\leq s$.

 Suppose $s=2$. Then $r=2$.
 Now one can prove that $F=nA+2nB$, $AB=2$, $A^2=-4$ and $B^2=-1$.
 We get cases 6) and  7).

Suppose  $s=3$. Then $F_{\rm red}^2\geq -2$ and $r=2$ or $3$.

 If
$r=2$, then one can prove that
$F=\gamma_1\Gamma_1+3\gamma_1\Gamma_2$, $\Gamma_1\Gamma_2=3$. Hence
$\mu_F-\beta_F=3-1=2$. Now we have $2(\mu_F-\beta_F)\geq 6+F_{\rm
red}^2$, a contradiction.

 If $r=3$ and $\Gamma_1\Gamma_2=\Gamma_2\Gamma_3=\Gamma_3\Gamma_1=1$,
then one can prove that
$F=\gamma_1\Gamma_1+3\gamma_1\Gamma_2+2\gamma_1\Gamma_3$. Hence
$\mu_F-\beta_F=3-1=2$ and $2(\mu_F-\beta_F)\geq 6+F_{\rm red}^2$, a
contradiction.

 If $r=3$, $\Gamma_1\Gamma_2=2$, $\Gamma_2\Gamma_3=1$ and
 $\Gamma_3\Gamma_1=0$, then we have $-e_1\gamma_1+2\gamma_2=0$,
 $2\gamma_1-e_2\gamma_2+\gamma_3=0$ and $\gamma_2-e_3\gamma_3=0$.
 Since $\gamma_2\neq \gamma_3$ , we have $e_3\geq 2$. We obtain that
 $e_1=e_3(e_1e_2-4)$, it implies $e_2=1$. $e_1=e_3=5$, or $e_1=6$
 and  $e_3=3$,  or $e_1=8$ and $e_3=2$. Hence $F_{\rm red}^2\leq
 -4$, a contradiction.

Suppose $s=4$. Then $F_{\rm red}^2=-1$. Let $F=\sum_{i=1}^k n_iC_i$,
where $C_i$ is irreducible.
 Let $q_1,\cdots, q_4$ be the nodes satisfying $\frac12\geq \beta_{q_1}\geq\cdots\geq \beta_{q_4}$.
By \eqref{eq2c2c12}, we have
\begin{align}\label{0.4}
2\sum_{=1}^4  (\mu_{q_k}-\beta_{q_k})<5,
\end{align}
It implies $\beta_{q_1}=\beta_{q_2}=\frac{1}{2}$.
 Let $C_{i_1}$ and $C_{i_2}$ be the components passing
 through $q_i$ ($i\leq 4$) and let $n_{i1}\leq n_{i2}$.  From $\beta_{q_1}=\beta_{q_2}=\frac{1}{2}$,
 we have   ${n_{i2}}=2n_{i1}$ for $i=1$ and $2$, and thus
 $\chi(n_{11}, n_{12})=\chi(n_{21}, n_{22})=0$.

From the proof of Theorem \ref{theorem_chi_F}, we have
$$ \sum_{k\leq 4} (\mu_{q_k}-\beta_{q_k})+F_{\rm red}^2=-12\sum_{i\leq 4}\chi(n_{i1}, n_{i2}).$$
Since $F_{\rm red}^2=-1$, and $\beta_{q_1}=\beta_{q_2}=\frac12$,
$$\mu_{q_3}-\beta_{q_3}+\mu_{q_4}-\beta_{q_4}=
6-\left(\frac{n_{31}}{n_{32}}+\frac{n_{32}}{n_{31}}\right)-
\left(\frac{n_{41}}{n_{42}}+\frac{n_{42}}{n_{41}}\right)-\beta_{q_3}-\beta_{q_4}.$$
i.e.,
$$4=\left(\frac{n_{31}}{n_{32}}+\frac{n_{32}}{n_{31}}\right)+
\left(\frac{n_{41}}{n_{42}}+\frac{n_{42}}{n_{41}}\right)\geq
2+2=4.$$ Thus $n_{31}=n_{32}$ and $n_{41}=n_{42}$, so
$\beta_{q_3}=\beta_{q_4}=1$, a contradiction.

Up to now  we have completed the proof. \hfill $\Box$

\begin{corollary}\label{last}
If the semistable model of $F$ is smooth and $F$ is not the multiple
of a smooth curve, then $2c_2(F)-c_1^2(F)\ge 6$.
\end{corollary}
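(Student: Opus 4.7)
The plan is to argue by contraposition. Assume $2c_2(F)-c_1^2(F)<6$ and deduce that either $F$ is a multiple of a smooth curve or the semistable model of $F$ fails to be smooth. By Theorem~\ref{THMnew1.4}, the hypothesis $2c_2(F)-c_1^2(F)<6$ forces $F$ into one of the eight types 1)--8) enumerated there, and I would inspect each type under the standing hypotheses of the corollary.

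Cases 2)--8) are immediate: the paragraph following Theorem~\ref{THMnew1.4} already records that the semistable models of these fibers are not smooth (this is precisely why they cannot appear in an isotrivial family, whose semistable model is tautologically smooth). So these cases contradict the assumption that the semistable model of $F$ is smooth, and they are ruled out.

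The remaining case is 1), where $F=nF_{\rm red}$ with $F_{\rm red}$ smooth or nodal. If $F_{\rm red}$ is smooth, then $F$ is a multiple of a smooth curve, contradicting the second hypothesis. If $F_{\rm red}$ carries at least one node $p$, I would run a short local computation: in coordinates near $p$ where $F$ has local equation $(xy)^n=0$, the base change $z=w^n$ yields $X_1$ locally cut out by $(xy)^n=w^n=\prod_{\zeta^n=1}(xy-\zeta w)$; each sheet $\{xy=\zeta w\}$ of the normalization is smooth, but its fiber over $\tilde p=\{w=0\}$ equals $\{xy=0\}$, i.e.\ a node. Hence the semistable model inherits a node and is not smooth, again contradicting the first hypothesis. (As a sanity check, this also matches the fact that, by the Miyaoka--Yau equality quoted in the introduction, $2c_2(F)-c_1^2(F)=0$ occurs exactly when $F=nF_{\rm red}$ with $F_{\rm red}$ a nodal curve, so the only way to achieve equality while keeping the semistable model smooth is to take $F_{\rm red}$ genuinely smooth.)

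The main obstacle is not really technical, since Theorem~\ref{THMnew1.4} does all the heavy lifting; the only piece not spelled out explicitly in the paper is the local verification in case 1) with nodal $F_{\rm red}$, ensuring that nodes propagate to the semistable reduction. Putting the three incompatible subcases together rules out $2c_2(F)-c_1^2(F)<6$ and yields the desired inequality $2c_2(F)-c_1^2(F)\geq 6$.
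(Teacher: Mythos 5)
Your proof is correct and follows the route the paper intends: the corollary is stated without proof as an immediate consequence of Theorem~\ref{THMnew1.4} together with the remark (made after that theorem's statement) that the fibers of types 2)--8) have non-smooth semistable models. Your only addition is the explicit local computation showing that in case 1) with $F_{\rm red}$ nodal the node $(xy)^n=w^n$ persists in the semistable reduction, which correctly fills in the one detail the paper leaves implicit.
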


\begin{corollary} {\rm\cite{Po08}}
If $f: X\to C$ is an isotrivial family of curves, then $K_X^2\ne
8\chi(\mathcal{O}_X)-1$.
\end{corollary}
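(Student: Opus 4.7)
The plan is to express $8\chi(\mathcal O_X)-K_X^2$ as a sum of the local differences $2c_2(F_i)-c_1^2(F_i)$ over the singular fibers (divided by $3$) and then to invoke Corollary~\ref{last}.

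First I would observe that for an isotrivial family the $J$-map $C\to\overline{\mathcal M}_g$ is constant, so $\kappa(f)=\delta(f)=\lambda(f)=0$. Plugging this into the identities in (0.3) and using Noether's formula $c_1^2(X)+c_2(X)=12\chi(\mathcal O_X)$ globally, the $(g-1)(g(C)-1)$ terms cancel, yielding
\begin{align*}
8\chi(\mathcal O_X)-K_X^2 = c_2(X)-4\chi(\mathcal O_X) = \sum_{i=1}^{s}\bigl(c_2(F_i)-4\chi_{F_i}\bigr)=\frac{1}{3}\sum_{i=1}^{s}\bigl(2c_2(F_i)-c_1^2(F_i)\bigr),
\end{align*}
where the last equality uses the fibral Noether equality $c_1^2(F_i)+c_2(F_i)=12\chi_{F_i}$.

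Next I would note that the semistable model of every singular fiber of an isotrivial family is smooth, since a base change making $f$ trivial pulls every fiber back to the smooth general fiber $F_0$. Corollary~\ref{last} then splits the singular fibers into two classes: those of the form $F_i=n_iC_i$ with $C_i$ a smooth curve, for which case~1 of Theorem~\ref{THMnew1.4} gives $c_1^2(F_i)=c_2(F_i)=0$ and contribution $0$; and those with $2c_2(F_i)-c_1^2(F_i)\ge 6$. Hence the sum above lies in $\{0\}\cup[6,\infty)$, so the integer $8\chi(\mathcal O_X)-K_X^2$ lies in $\{0\}\cup\{n\in\mathbb Z:n\ge 2\}$ and in particular cannot equal $1$.

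The substantive work is already bundled inside Corollary~\ref{last}, which itself rests on the classification in Theorem~\ref{THMnew1.4}. Granted that, what remains is straightforward: verifying the vanishing of the modular invariants under isotriviality, and re-expressing $8\chi-K_X^2$ as one third of a sum of purely local invariants via Noether's formula. The final ``$\ne 1$'' step is then just a numerical observation exploiting the gap between $0$ and $6$ in the range of $2c_2(F)-c_1^2(F)$.
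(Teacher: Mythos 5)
Your argument is correct and is essentially the paper's own proof: the vanishing of the modular invariants for an isotrivial family localizes $2c_2(X)-c_1^2(X)=3\bigl(8\chi(\mathcal O_X)-K_X^2\bigr)$ as $\sum_i\bigl(2c_2(F_i)-c_1^2(F_i)\bigr)$, and Corollary~\ref{last} forces each summand into $\{0\}\cup[6,\infty)$, which rules out the value $3$. One small slip in your write-up: for $F_i=n_iC_i$ a multiple of a smooth curve with $n_i\ge 2$ one has $c_1^2(F_i)=4N_{F_i}$ and $c_2(F_i)=2N_{F_i}$ with $N_{F_i}=(n_i-1)(g(C_i)-1)>0$, so these Chern numbers do not vanish individually (such a fiber is not semistable, hence all three are positive); only the combination $2c_2(F_i)-c_1^2(F_i)$ is zero, which is all your argument actually uses.
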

\begin{proof} In this case, the modular invariants of $f$ are zero.
 Hence
$$
2c_2(X)-c_1^2(X)=\sum_i\left(2c_2(F_i)-c_1^2(F_i)\right)\geq0.
$$
Suppose $K_X^2\neq 8\chi(\mathcal{O}_X)$, i.e., $2c_2(X)\neq
c_1^2(X)$, then at least one singular fiber satisfies the condition
of Corollary \ref{last}, hence $2c_2(X)-c_1^2(X)\geq 6$,
equivalently, $K_X^2\leq 8\chi(\mathcal{O}_X)-2$.
\end{proof}

\begin{corollary}\label{propc2c2c2}
If $F$ is not semistable,
 then   $c_2(F)\ge \frac{11}{6}$ and $\chi_F\geq\frac16$.
One of the equalities holds if and only if $F$ is a reduced curve
with one ordinary cusp and some nodes.
 \end{corollary}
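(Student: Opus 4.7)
The plan is to reduce the corollary to Theorem \ref{THMnew1.4} together with a direct estimate in the complementary range of the quantity $T:=2c_2(F)-c_1^2(F)$. By Lemma \ref{lemma6.1} one has $T\in\{0\}\cup[3,\infty)$, and the value $T=0$ characterises multiples of smooth curves, which are semistable in the convention used here; so for non-semistable $F$ one may assume $T\ge 3$.

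I would first dispatch the large regime $T\ge 6$. Combined with the nonnegativity $c_1^2(F)\ge 0$ (property~(1) of the Chern numbers) this gives $c_2(F)\ge 3$ immediately, and then Noether's equality $c_1^2(F)+c_2(F)=12\chi_F$ yields $12\chi_F\ge c_2(F)\ge 3$, so $\chi_F\ge 1/4$. Both bounds are strict, so no equality instance lives here.

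The bulk of the argument is the regime $3\le T<6$, which is exactly the hypothesis of Theorem~\ref{THMnew1.4}: $F$ must be one of the eight configurations in the classification list, and the table in that theorem records $c_2(F)-2N$ and $\chi_F-\tfrac12 N$ explicitly. I would read $c_2(F)=2N+(c_2-2N)$ and $\chi_F=\tfrac12 N+(\chi-\tfrac12 N)$ off the table, showing in each case that $c_2(F)\ge \tfrac{11}{6}$ and $\chi_F\ge \tfrac16$. In case~1) non-semistability together with the positivity property forces $N\ge 1$ (otherwise $c_2(F)=0$), yielding $c_2(F)\ge 2$ and $\chi_F\ge \tfrac12$, both strict. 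In cases~6), 7), 8), where the entry $c_1^2-4N$ is negative, the nonnegativity $c_1^2(F)\ge 0$ forces $N\ge 1$, again giving strict inequalities. Cases~3), 4), 5) have $c_2-2N$ and $\chi-\tfrac12 N$ both strictly above the thresholds already at $N=0$. This leaves case~2), where $c_2-2N=\tfrac{11}{6}$ and $\chi-\tfrac12 N=\tfrac16$; equality in either inequality forces $N=0$, i.e.\ $F$ is reduced, and case~2) then describes $F$ as a reduced curve with one singular point of type $A_2$ (an ordinary cusp) together with nodes.

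The main obstacle is the bookkeeping: one must check the case-by-case inequalities in the table without omission, and in cases~1), 6), 7), 8) confirm via $c_1^2(F)\ge 0$ (or positivity) that $N\ge 1$ is forced, so that only case~2) with $N=0$ realises both equalities simultaneously. Once this is done, the equality characterisation matches the statement verbatim and the corollary is proved.
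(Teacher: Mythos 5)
Your proposal is correct and follows essentially the same route as the paper's (much terser) proof: split on whether $2c_2(F)-c_1^2(F)\geq 6$ or $<6$, handle the first regime by $c_1^2\geq 0$ and Noether's equality, and in the second regime read the bounds off the table of Theorem \ref{THMnew1.4}, using $c_1^2\geq 0$ (resp.\ positivity of $c_2$) to force the integer $N\geq 1$ in cases 6)--8) (resp.\ case 1)), so that only case 2) with $N=0$ realises equality. One remark is factually wrong, though harmlessly so: you assert that $T=0$ characterises multiples of \emph{smooth} curves and that these are semistable. By the Miyaoka--Yau type equality case, $T=0$ holds iff $F=nF_{\rm red}$ with $F_{\rm red}$ nodal, and such a fiber is \emph{not} semistable when $n\geq 2$; so you cannot "assume $T\geq 3$" for non-semistable $F$. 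This does not damage the proof, because the $T=0$ fibers are exactly case 1) of Theorem \ref{THMnew1.4} (whose hypothesis $T<6$ includes $T=0$), and you do treat case 1) in your case-by-case check, correctly deducing $N\geq 1$ and hence $c_2\geq 2>\frac{11}{6}$, $\chi_F\geq\frac12>\frac16$ there. If you delete the erroneous sentence and simply say that Lemma \ref{lemma6.1} reduces the range $T<6$ to the eight cases of Theorem \ref{THMnew1.4}, the argument is clean.
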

\begin{proof}
If $2c_2(F)-c_1^2(F)\geq 6$, equivalently, $8\chi_F-c_1^2(F)\geq 2$,
then $c_2(F)>3$ and $\chi_F > \frac14$. So we can assume that
$2c_2(F)-c_1^2(F)<6$. By Theorem \ref{THMnew1.4}, we have 8 types of
singular fibers. We see that only type 2) fiber with $N_F=0$ has the
minimal $c_2(F)$ and $\chi_F$. This proves the corollary.
\end{proof}

\bigskip \noindent {\it Questions}: 1) What is the upper bound of
$c_2(F)$. We conjecture $c_2(F)\leq \frac{55g}{6}$.

2) Is $\frac16$ the lower bound of $c_1^2(F)$ for a minimal
non-semistable fiber $F$?

3)  Is the inequality $c_1^2(F)\geq \chi_F$ true for any minimal
singular fiber $F$? \hskip0.2cm (If $F$ is a singular fiber in an
isotrivial family, then one can prove easily that $c_1^2(F)\geq
\frac{4(g-1)}{g}\chi_F$).

\end{document}